\theoremstyle{plain}
\newtheorem*{theorem*}{Theorem}
\newtheorem{theorem}{Theorem}[section]
\newtheorem{lemma}[theorem]{Lemma}
\newtheorem{proposition}[theorem]{Proposition}
\newtheorem{corollary}[theorem]{Corollary}
\newtheorem{remark}[theorem]{Remark}
\newtheorem{definition}[theorem]{Definition}
\theoremstyle{definition}
\theoremstyle{remark}
\numberwithin{equation}{section}
\newcommand{\Qscr}{{\mathscr{Q}}}
\newcommand{\Ad}{\mathscr{A}}
\newcommand{\mad}{\mathrm{D}}
\newcommand{\Td}{T^{D}}
\newcommand{\sal}{S}
\newcommand{\tsal}{\widehat{S}}
\newcommand{\D}{\mathrm{D}}
\newcommand{\ol}{\overline}
\newcommand{\dist}{\mathrm{dist}}
\newcommand{\nnu}{\bm{\nu}}
\newcommand{\ttau}{\bm{\tau}}
\newcommand{\ep}{\varepsilon}
\newcommand{\ffi}{\varphi}
\newcommand{\R}{\mathbb{R}}
\newcommand{\N}{\mathbb{N}}
\newcommand{\Z}{\mathbb{Z}}
\newcommand{\F}{\mathcal{F}}
\newcommand{\di}{\textrm{dist}}
\newcommand{\om}{\omega}
\newcommand{\ud}{\;\mathrm{d}}
\newcommand{\Om}{\Omega}
\newcommand{\supp}{\mathrm{supp}\,}
\newcommand{\M}{\mathcal{M}}
\newcommand{\Div}{\mathrm{Div}}
\newcommand{\Huno}{\mathcal H^1}
\newcommand{\weakly}{\rightharpoonup}           
\newcommand{\weakstar}{\stackrel{*}{\weakly}}   
\newcommand{\fla}{\stackrel{\mathrm{flat}}{\rightarrow}}
\newcommand{\flt}{\mathrm{flat}}
\newcommand{\rad}{\mathcal Rad}
\newcommand{\loc}{\mathrm{loc}}
\newcommand{\Qcal}{{\mathcal{Q}}}
\newcommand{\Cc}{C_{\mathrm{c}}}
\newcommand{\cu}{\mathrm{curl}\;}
\newcommand{\Ss}{\mathbb{S}}
\newcommand{\Su}{\mathbb{S}^1}
\newcommand{\sm}{\setminus}
\newcommand{\Ld}{\mathcal{L}^2}
\newcommand{\iii}{\mathrm{int}}
\newcommand{\fff}{\mathrm{frac}}
\newcommand{\MMM}{\color{black}}
\newcommand{\EEE}{\color{black}}
\newcommand{\tdelta}{\widehat{\delta}}
\newcommand{\strictly}{\overset{\mathrm{strict}}\weakly}
\newcommand{\Ccal}{{\mathscr{C}}}
\newcommand{\B}{\mathscr{B}}
\def\XXint#1#2#3{{\setbox0=\hbox{$#1{#2#3}{\int}$}
     \vcenter{\hbox{$#2#3$}}\kern-.5\wd0}}
\newcommand{\res}{\mathop{\hbox{\vrule height 7pt width .5pt depth 0pt
\vrule height .5pt width 6pt depth 0pt}}\nolimits}
\newcommand{\newatop}{\genfrac{}{}{0pt}{1}}
\def\@splitop#1#2\@nil{$\mathscr{#1}\!\!$\calligra#2\,\,}
\newcommand*\DeclareCursiveOperator[2]{%

 \newcommand#1{\mathop{\mbox{\@splitop#2\@nil}}\nolimits}}
\DeclareCursiveOperator{\Anew}{A}
\DeclareCursiveOperator{\Bnew}{B}
\DeclareCursiveOperator{\Cnew}{C}
\DeclareCursiveOperator{\Dnew}{D}
\DeclareCursiveOperator{\Enew}{E}
\DeclareCursiveOperator{\Qnew}{Q}
\title[Jacobian for $BV$ maps]
{ Approximation of topological singularities 
\\ through free discontinuity functionals: \\
the critical and super-critical regimes 
}
\author[V. Crismale]
{V. Crismale}
\address[Vito Crismale]{Dipartimento di Matematica ``Guido Castelnuovo'', Sapienza Universit\`a di Roma, Piazzale Aldo Moro 2, I-00185 Roma, Italy}
\email[V. Crismale]{crismale@mat.uniroma1.it}
\author[L. De Luca]
{L. De Luca}
\address[Lucia De Luca]{Istituto per le Applicazioni del Calcolo ``M. Picone'', IAC-CNR, 00185 Rome, Italy}
\email[L. De Luca]{lucia.deluca@cnr.it}
\author[R. Scala]
{R. Scala}
\address[Riccardo Scala]{Dipartimento di Ingegneria dell'Informazione e Scienze Matematiche, Universit\`a di Siena, 53100 Siena, Italy.}
\email[R. Scala]{riccardo.scala@unisi.it}
\begin{document}
\begin{abstract} 
We further investigate the properties of an approach to topological singularities through free discontinuity functionals of Mumford-Shah type proposed in \cite{DLSVG}. We prove the variational equivalence between such energies, Ginzburg-Landau, and Core-Radius for anti-plane screw dislocations energies in dimension two, in the relevant energetic regimes $|\log \varepsilon|^a$, $a\geq 1$, where $\varepsilon$ denotes the linear size of the process zone near the defects. 

Further, we remove the \emph{a priori} restrictive assumptions that the approximating order parameters have compact jump set. This is obtained by proving a new density result for $\Ss^1$-valued $SBV^p$ functions, approximated through functions with essentially closed jump set, 
 in the strong $BV$ norm. 
%
\vskip5pt
\noindent
\textsc{Keywords}: Functions of Bounded Variation; Strict Convergence; Jacobian determinant;
Topological Singularities;  $\Gamma$-convergence; Ginzburg-Landau Model; Core-Radius Approach. 
\vskip5pt
\noindent
\textsc{AMS subject classifications:}  
49J45   
49Q20 
26B30 
74B15   

\end{abstract}
\maketitle
\tableofcontents
\section*{Introduction}
This paper concerns the analysis of topological singularities, which is a central topic in models arising in Physics and Materials Science. Vortices in superconductivity and superfluidity and (screw end edge) dislocations in single crystal  plasticity are the main examples of such phenomenon \cite{AO, HL, HB, Lo1, Lo2, Mermin}.

In the last decades several models have been introduced to describe the emergency of these objects. Among them,
the most celebrated is the Ginzburg-Landau (GL) model, mainly studied in the context of superconductivity. 
In such a model, the order parameter is a function $u\in H^1(\Omega;\R^2)$ and the energy functional (in its simplest form) reads as
\begin{equation}\label{def:gl_intro}
	\mathcal{E}^{\mathrm{GL}}_{\ep}(u):=\frac{1}{2}\int_{\Omega}|\nabla u|^2\ud x+\frac{1}{\ep^2}\int_{\Omega}\big(1-|u|^2\big)^2\ud x,
\end{equation}
where the parameter $\ep>0$ is referred to as {\it coherence length}. 
Here and below $\Omega\subset\R^2$ is a bounded open set with Lipschitz continuous boundary.
A topological singularity is nothing but a point around which $u$ has non-trivial winding number and hence the main object to look at is the Jacobian determinant (of $u$) $Ju:=\det \nabla u$. 
Denoting by $\Ss^1$ the set of unitary vectors in the plane, we notice that close to a topological singularity, $u$ cannot be $\Ss^1$-valued
 (a singularity can be seen somehow as a zero of the order parameter); therefore, the parameter $\ep$ can be interpreted as the size of the region where 
 $u$ fails to take values in $\Ss^1$ and hence as the core-radius of the topological singularity.

 The variational analysis of the (GL) functional has been first systematized in the monography \cite{BBH} (see also \cite{SS2} and the references therein for the asymptotic analysis in terms of $\Gamma$-convergence), where the (GL) model is compared with (and shown somehow to be ``equivalent'' to) the so-called core-radius (CR) approach,  in antiplane elasticity. 
 Within this framework, the main variable is represented by the distribution of topological singularities $\mu=\sum \alpha_i\delta_{\xi_i}$ (with integer weights $\alpha_i$) but  the energy functional depends both on $\mu$ - which here plays the role of the Jacobian in (GL) - and on a map $u\in H^1(\Omega_\ep(\mu);\Ss^1)$ ``compatible with $\mu$''. Here, $\ep$ is the core-radius, $\Omega_\ep(\mu):=\Omega\setminus\bigcup_{i}\overline{B}_\ep(\xi_i)$ and the notion of compatibility is given by the fact that $\deg(u,\partial B_\ep(\xi_i))=\alpha_i$ (assuming that the balls $B_\ep(\xi_i)$ are pairwise disjoint). The energy of the system thus writes
\begin{equation}\label{def:cra_intro}
	\mathcal{E}^{\mathrm{CR}}_{\ep}(\mu,u):=\frac{1}{2}\int_{\Omega_\ep(\mu)}|\nabla u|^2\ud x+|\mu|(\Omega).
\end{equation}
Here, the quantity $|\mu|(\Omega)$ plays the same role of the potential term in $\mathcal{E}^{\mathrm{CR}}_{\ep}$, namely, avoids that the cores do not cover the whole domain; in other words, it serves only to guarantee compactness and does not provide any energy contribution in the asymptotics as $\ep\to 0$.

The (CR) approach is mostly used to model screw dislocations in semi-discrete theories. Loosely speaking, in pure (anti-plane) elasticity the bulk energy is determined by the Hooke's law, and 
reads as $\frac 1 2\int_{\Omega}|\nabla w|^2\ud x$, where the displacement $w$ lies in $H^1(\Omega)$.
In presence of a finite distribution $\mu=\sum_i\alpha_i\delta_{x_i}$ of (scalar) defects, the material has a purely plastic behavior in the cores $B_\ep(\xi_i)$ and, oversimplifying, such a plastic contribution can be expressed by $|\mu|(\Omega)$.
Moreover, along a closed circuit enclosing the singularity $\xi_i$, a displacement $w$ compatible with $\mu$ should have a jump $[w]$ equal to $\alpha_i$. Therefore, the displacement $w$ is only in $SBV^2(\Omega_\ep(\mu))$ with $[w]\in\Z$ and its elastic energy should be given by $\frac{1}{2}\int_{\Omega_\ep(\mu)}|\nabla w|^2\ud x$, where $\nabla w$ is the absolutely continuous part of $\mathrm{D}w$. 
Setting 
\begin{equation}\label{fase}
u=e^{2\pi\imath w},
\end{equation}
one obtains that the total energy associated to the pair $(\mu,u)$ is given by $\mathcal{E}^{\mathrm{CR}}_\ep$.
\medskip
 
In this paper, we adopt a different viewpoint, following the approach proposed in \cite{DLSVG}. 
The main feature is that the order parameter is now an $\Ss^1$-valued map, as in the (CR) approach, defined on the whole $\Omega$, as in the (GL) approach. Clearly, in presence of topological singularities, such a map cannot be in $H^1(\Omega;\Ss^1)$. But, instead of removing small disks around the singularities (as in (CR)) or to weaken the $\Ss^1$-constraint (as in (GL)), the map $u$ is now allowed to jump. More precisely, $u$ is a {\it special function of bounded variation}  with square-integrable approximate gradient (i.e., $u\in SBV^2(\Omega; \Ss^1)$).
The energy functional we consider is 
\begin{equation}\label{defEneG:intro}
	\F_\ep(u):= \int_{\Omega}\frac 1 2 |\nabla u|^2\ud x+\frac 1 \ep\Huno(S_u),
\end{equation}
where $\ep>0$ is a small parameter determining the size of the jump set $S_u$ of $u$. Here and throughout the paper $\Huno$ denotes the (one-dimensional) Hausdorff measure.

Formally, the functional $\mathcal F_\ep$ 
has the structure of the Mumford-Shah functional \cite{MS89}, but the $\Ss^1$-constraint makes the analysis completely different.
Indeed, having in mind the identity \eqref{fase} and the (CR) approach for screw dislocations, jumps of the map $u$ correspond to non-integer jumps of the displacement $w$ and should pay energy.
 In other words, the (amplitude of the) jump $[w]$ of the displacement exhibits  a transition between integers in a little portion  of $S_w$. 
The transition is assumed to have length of the order of $\ep$, and corresponds to the presence of singularities. 
In this respect, 
$\Huno(S_u)$ is the analogue of the potential term in (GL) and of the plastic term in (CR) and
the parameter $\ep$ can be understood also in this case as the core-radius of the singularity.

We highlight that for $SBV$ maps the definition of topological degree as well as that of Jacobian determinant are not so standard so that the notion of topological singularity is not so clear as in (GL) and in (CR). Nevertheless, in \cite{DLSVG}, using the {\it minimal lifting} in \cite{Jerrard}, a notion of  Jacobian determinant is provided also for $SBV$ functions; we recall such a definition in Section \ref{sec:prelim} (see \cite{M} where in a more restrictive setting this notion was first introduce, and see also \cite{BCD} for a different definition of Jacobian determinant in dimension 2).
In a nutshell, given a map $u\in SBV(\Omega;\Ss^1)$, 
the Jacobian $Ju$ of $u$ is defined as the boundary of the $1$-current $T_u$
defined by 
\begin{equation}\label{corrente_intro}
\begin{aligned}
T_u:=&\frac12(-u^1\partial_{x_2}u^2+u^2\partial_{x_2}u^1;u^1\partial_{x_1}u^2-u^2\partial_{x_1}u^1)+\frac12(u^+\wedge u^-)\bm\tau\res S_{u}\\
=:&\,T_u^{D}+T_u^S\,.
\end{aligned}
\end{equation}
We stress that dealing with the energy considered in  \cite{DLSVG},  that is 
\begin{equation}\label{defEne_intro}
	\mathcal G_\ep(u) := \int_{\Omega}\frac 1 2 |\nabla u|^2\ud x+\frac 1 \ep\Huno(\overline{S}_u),
\end{equation}
rules out, for instance, $SBV^2$ functions with jump set dense in the whole $\Omega$; such a fact is crucial to prove compactness, rendering the analysis more simpler.

In \cite[Theorem 3.1]{DLSVG} the $\Gamma$-convergence analysis of the  functional  $\mathcal G_\ep$ at the energy regime $|\log\ep|$ has been developed.
As one may expect,  such an analysis reveals that the functional $\mathcal G_\ep$ shares the same compactness and $\Gamma$-convergence properties of the functionals $\mathcal{E}_{\ep}^{\mathrm{CR}}$ and $\mathcal{E}_\ep^{\mathrm{GL}}$.
Specifically, as $\ep\to 0$, the Jacobian determinant tends to concentrate around a finite number of effective singularities and the $\Gamma$-limit of
the functionals $\frac{\mathcal G_\ep}{|\log\ep|}$ is given (up to multiplicative constants) by the total variation of 
the limiting measure of the Jacobians. 
Since, in view of the possibile presence of short dipoles, a uniform bound on the total variation of the dislocations' distributions is not available, the natural setting for such an asymptotic analysis is the (strong) flat convergence for Jacobian determinants, the flat topology being the strong topology in the dual of Lipschitz continuous functions with compact support in $\Omega$.
\medskip

In the present paper we generalize the analysis done in \cite{DLSVG} along two directions.
On the one hand, we show that the penalization term can be ``weakened'' considering only the length of the jump set instead of its closure, i.e., working with the functional $\F_\ep$ rather than with $\mathcal G_\ep$.  Therefore it is not needed to assume \emph{a priori} that the jump set is compact. \EEE On the other hand, we show that the functional $\F_\ep$ shares the same asymptotic behavior of the functionals $\mathcal{E}_{\ep}^{\mathrm{CR}}$ and $\mathcal{E}_\ep^{\mathrm{GL}}$ also in other energy regimes.

The first improvement is obtained by means of a density result in $SBV^p(\Omega; \Ss^1)$, $p>1$, with respect to energies $\F_\ep$ for fixed $\varepsilon>0$, through functions in $SBV^p(\Omega;\Ss^1)$ with (essentially) closed jump set, converging in the strong $BV$ norm and such that also the two unilateral traces of the approximants along the jump set converge; in particular, by using the characterization of $Ju$ as the boundary of $T_u$ in \eqref{corrente_intro}, the strong convergence of Jacobian determinants with respect to flat norm follows. 
 
 Our result hinges on tools developed in a slightly different setting, that is when only the symmetric part of the diffuse gradient is controlled in some $L^p$, for $p>1$, rather than the whole diffuse gradient. Mechanically, this corresponds to consider fracture models for general linearized elasticity without the anti-plane assumption, described by the Griffith functional \cite{Griffith} instead of the Mumford-Shah one.
 
In fact, the main tool for density results developed in the context of Mumford-Shah functional is an approximated Poincar\'e-Wirtinger inequality for $SBV^p$ functions with small ($\mathcal{H}^{d-1}$-measure of the) jump set, due to De Giorgi-Carriero-Leaci (\cite{DeGCarLea}): given $u\in SBV^p$ there exists a truncation in $W^{1,p}$ such that $u$ differs from $w$ on an exceptional set $\omega$ whose volume is controlled by $(\mathcal{H}^{d-1}(\sal_u))^{1^*}$, $d\geq 2$ being the space dimension 
and $1^*:=d/d{-}1$. In the same paper, this result has been used to prove that the jump set of Mumford-Shah minimizers is essentially closed, namely the $\mathcal{H}^{d-1}$ measure of the jump set equals that of its closure. After short time a generalization for $SBV^p(\Omega; \Ss^{k-1})$ maps has been proven in \cite{CarLea91}; by combining such a generalization with an argument in \cite{BraChP96} (cf.\ Lemma~5.2 therein), one can show that the Mumford-Shah functional in $SBV^p(\Omega; \Ss^1)$ can be approximated through $SBV^p(\Omega; \Ss^1)$ functions having essentially closed jump set and converging pointwise. 
Unfortunately, pointwise convergence of a sequence of functions does not guarantee  convergence of the corresponding Jacobians; since we need convergence in the flat norm such an approach is not satisfactory for our purposes. 
For functions with finite Griffith energy (with exponent $p$), that is in the space $GSBD^p$ (\cite{DM13}), a fundamental tool is the approximated Poincar\'e-Korn inequality in \cite{ChaConFra14}, stating that for any $u \in GSBD^p$ with small jump set there exists an infinitesimal rigid motion $a$ (i.e.\ an affine function with null symmetrized gradient) such that the $L^{\frac{dp}{d-1}}$-norm of $u-a$ is estimated by the $L^p$ norm of $e(u)$, the symmetrized diffuse gradient of $u$, outside an exceptional set $\omega$  whose volume is controlled by $(\mathcal{H}^{d-1}(J_u))^{{1^*}}$. Moreover, a convolution of $u \chi_{\omega^c}+a \chi_\omega$ at the same scale of the domain provides a function with $L^p$-norm of the symmetrized diffuse gradient controlled 
by those of $u$. 

This result, on which other contribution in this direction rest (see, e.g.,the approximation in $GSBD^p$ through functions with essentially closed jump set \cite{CFI17Density, ChaCri17} and the analogue of \cite{DeGCarLea} for the Griffith functional \cite{CFI19, CCI19, ChaCri19}), has been generalized by \cite{CFI18} and \cite{CCS22}: here, any $u\in GSBD^p$ is approximated, in terms of the Griffith energy, by functions $W^{1,p}$ on a slightly smaller domain, with essentially closed jump set, which differ from $u$ on a set $\omega$ whose boundary is controlled by $\mathcal{H}^{d-1}(J_u)$; further, in \cite{CFI18} it is shown that in dimension two it is possible to guarantee that the approximants coincide with $u$ in the boundary neighborhood where they are not in $W^{1,p}$. In \cite{CFI18} such an approximation is used to prove an integral representation result, while in \cite{CCS22} the main result is the approximation of any $u\in GSBD^p$,  with respect to the Griffith energy, through functions with essentially closed jump set differing from $u$ on sets of vanishing perimeter.

Moreover, Friedrich \cite{Fri17} proved a \emph{piecewise Korn inequality} in dimension two, showing that up to subtracting piecewise rigid functions (finite sums of infinitesimal rigid motions multiplied by characteristic functions), any $u$ in $GSBD^p$ can be approximated by functions in $SBV^q\cap L^\infty$, for $q<p$, in particular the diffuse gradient of the approximants is estimated on the whole domain by $e(u)$; this is a very powerful tool allowing to overcome the lack of a Coarea Formula in $GSBD$ and then to show, e.g., existence of quasi-static evolutions for Brittle Fracture models (see, for instance, \cite{FriSol18}). In the same spirit, in \cite{Fri19} a similar result has been shown in the Mumford Shah setting, namely dealing only with full diffuse gradients.

Eventually, we refer to \cite{DiPStr23} for the two dimensional analogue of \cite{CarLea91} for maps in $SBV^{p(\cdot)}(\Omega; \Ss^{k-1})$, (with $\Omega\subset \R^2$) whose approximate gradient is integrable with respect to the variable exponent $p(\cdot)$ over $\Omega$ and whose jump set has finite $\Huno$-measure (see also \cite{LeoSciSolVer23} for the variable exponents analogue of \cite{DeGCarLea}), obtained under the assumption that the function $p(\cdot)$ is regular enough and takes values in $(1,2)$. This uses the analogue of the approximation of \cite{CFI18}, proven by employing 
retractions $\mathcal{P}\colon \R^k\sm \mathcal{X}\to \Ss^{k-1}$ with locally $q$-integrable gradient for $q \in [1,2)$, where $\mathcal{X}$ is a smooth complex of codimension two (cf.\ e.g.\ \cite{CanOrl19}).

We then compare our main density result
Corollary~\ref{thm:densitySBV} with \cite[Theorem~5.1]{CCS22}: we are in two dimensions and consider the full diffuse gradient instead of its symmetrized part, however we keep in the approximation the constraint of being $\Ss^1$-valued. We notice that also a version with symmetrized diffuse gradient is readily shown with essentially the same proof, see Theorem~\ref{thm:densityGSBD}.

Since in our application the case $p=2$ is the relevant one, we cannot follow a strategy based on retractions. Moreover, the proof of \cite{CFI18}, \cite{CCS22}, and \cite{Fri19} is not compatible with a non-convex target space such as $\Ss^1$.

Our approach is based on the existence of a \emph{lifting} $\varphi\in SBV^p(\Omega)$ (i.e., such that $u=e^{2\pi \imath \varphi}$) with $\pi \|\varphi\|_{BV}\leq \|u\|_{BV}$ (\cite{DI}), for which we provide a suitable approximation (Theorem~\ref{thm:densitySBVaux}) and then compose the approximants with $e^{2\pi \imath \cdot}$. We observe that, since in \cite[Theorem~5.1]{CCS22} the set on which the traces of the approximants differ from those of the given function has only finite $\mathcal{H}^{d-1}$ measure and then could be dense in the original jump set, after an application of \cite[Theorem~5.1]{CCS22} to $\varphi$ and the composition with $e^{2\pi \imath \cdot}$, one could obtain approximating functions with jump set dense in the integer jump set of $\varphi$, whose $\mathcal{H}^{1}$-measure could be of order $\|u\|_{BV}$.

Therefore we need a more refined density result, which allows to approximate integer jumps with integer jumps as well. The strategy is to work locally near points with integer jump at a scale for which the jump is almost flat and assumes a constant integer value. Then locally most of the jump set of $\varphi$ can be transferred (up to a small error) into a flat segment $S$ on which the jump has the same value; such $S$ can be chosen in such a way that the remaining  small jump set of $\varphi$, on any square with arbitrarily small sidelength with a side contained in $S$, is small compared to the sidelength: this follows by a two dimensional argument drawn from \cite{CFI18}. 

In this way, the approximation for functions with small jump set of \cite{CFI18} may be applied at every scale; therefore a Whitney-type argument combined with the fact that the approximants coincide with the original function on the boundary of any square ensures that the traces are the same on both sides of $S$, so the new jump is still integer.
This strategy may be replicated for different target manifolds, provided a lifting with good $BV$ bounds as in \cite{DI} exists (in this respect see e.g.\ \cite{CanOrlJFA19}), but limiting to two dimensional domains. We notice that in the present context several hard issues arise when considering space dimension $d\geq 3$, such as the lack of the analogue of the Ball Construction.



The density result described above allows to develop the $\Gamma$-convergence analysis for the functional $\F_\ep$ also in different energetic regimes (as it applies for fixed $\varepsilon>0$), thus generalizing the setting of \cite{DLSVG}. 

First, we develop the $\Gamma$-convergence analysis in the so-called critical regime, that is $|\log\ep|^2$. Loosely speaking, since $|\log\ep|$ is the energy cost of an isolated singularity, the fact that $\mathcal F_\ep(u_\ep) \sim|\log\ep|^2$ implies that, for $\ep>0$, the number of singularities of the Jacobians $Ju_\ep$ is of order $|\log\ep|$; therefore, the Jacobians $Ju_\ep$, once rescaled by $|\log\ep|$, should converge (in the flat norm) to a measure $\mu$ that is not anymore atomic but diffuse. Furthermore, we prove that such a measure $\mu$ lies also in $H^{-1}(\Omega)$.
Indeed,
by standard compactness results in $L^2(\Omega)$, also the fields $T^{\mathrm D}_{u_\ep}$, once scaled by $|\log\ep|$, should converge (weakly in $L^2(\Omega)$) to a field $T^{\mathrm{D}}$, whose distributional divergence is shown to be given by $-\pi\mu$. As one may expect, the $\Gamma$-limit accounts both for the plastic contribution of $\mu$ as well as for the elastic energy of $T^{\mathrm{D}}$. That is the reason why the $|\log\ep|^2$ regime is called critical, since in such a case the elastic and plastic effects are of the same order.
The $\Gamma$-convergence analysis for the functional $\mathcal F_\ep$ is provided in Theorems \ref{mainthm} and \ref{mainthm_super} which are proved in Section \ref{sec:5}. 
Second, the proofs of the compactness and of the lower bound are obtained combining the corresponding results for the core-radius approach together with the refined ball construction machinery introduced in \cite{DLSVG} to analyze the $|\log\ep|$ regime.

Finally, adopting the same strategy, in Theorem \ref{mainthm_super} we analyze also the super-critical regimes $|\log\ep|^2\ll N_\ep\ll \frac{1}{\ep}$. In such a case, the interaction elastic energy is larger and larger than the core energy, so that (unless scaling differently the two quantities $Ju_\ep$ and $T^{\mathrm D}_{u_\ep}$) one keeps track of the only $T^{\mathrm D}$ and the Jacobian determinants do not play any role when computing the effective energy.

We highlight that the $\Gamma$-convergence analysis for the functional $\mathcal E_\ep^{\mathrm{GL}}$ in the regime $|\log\ep|^2$  has been developed in \cite{JS2, SS3, SS4}, where the authors consider also the case with magnetic field. The analysis for $\mathcal E_\ep^{\mathrm{CR}}$ is provided in Section \ref{cra_section} and is somehow a short self-contained resume of the results above, along the lines of \cite{AP}.

However, a similar result in the context of edge dislocations within the (CR) approach is proven in \cite{GLP} under the well-separation assumption for the singularities (see also \cite{MSZ} for such an analysis in the nonlinear elasticity framework); such an assumption has been removed in \cite{Ginster}.
 In view of the asymptotic equivalence result \cite{ACP} between the Ginzburg-Landau model and the purely discrete models of $XY$ spin systems and screw dislocations, we have that the analysis in the (GL) context extends also to such discrete models.

\medskip

The paper is organized as follows: After recalling some notations and preliminary results in Section \ref{sec:prelim}, we prove in Section \ref{sec:density} a general density result for $SBV^p(\Om)$ functions in Theorem \ref{thm:densitySBVaux} which implies, as a consequence, Corollary \ref{thm:densitySBV}. This is the result we employ to obtain energy density in the  $\Gamma$-convergence results of Section \ref{sec:model}, actually allowing us to restrict such analysis to $\Ss^1$-valued functions with essentially closed jump set. The latter results are Theorems \ref{mainthm} and \ref{mainthm_super}, stated in Section \ref{sec:model} after recalling the main features of our model. In order to prove them we recall in Section \ref{cra_section} the classical core radius approach, which is the starting point of our analysis. Finally, the proofs of Theorems \ref{mainthm} and \ref{mainthm_super} are given, respectively, in Sections \ref{sec:5} and \ref{sec:6}.
\medskip

\textsc{Acknowledgements:}
The authors are members of the Gruppo Nazionale per l'Analisi Matematica, la Probabilit\`a e le loro Applicazioni (GNAMPA) of the Istituto Nazionale di Alta Matematica (INdAM).

VC acknowledges the financial support of PRIN 2022J4FYNJ
 ``Variational methods for stationary and evolution problems with singularities and interfaces", PNRR Italia Domani, funded by the European Union under NextGenerationEU, CUP B53D23009320006.
 
LDL acknowledges the financial support of PRIN 2022HKBF5C
 ``Variational Analysis of complex systems in Materials Science, Physics and Biology", PNRR Italia Domani, funded
by the European Union via the program NextGenerationEU, CUP B53D23009290006.
 
 RS also acknowledges the partial financial support of the F-cur project number 2262-2022-SR-CONRICMIUR$_{-}$PC-FCUR2022$_{-}002$ of the University of Siena, and the of the PRIN project 2022PJ9EFL "Geometric Measure Theory: Structure of Singular
 Measures, Regularity Theory and Applications in the Calculus of Variations'', PNRR Italia Domani, funded
 by the European Union via the program NextGenerationEU, CUP B53D23009400006.

\section{{Preliminary results}}\label{sec:prelim}
In this section we collect some preliminary notions on the flat norm of measures and currents, as well as some properties of $BV$ functions that will be used throughout the paper.
\medskip
\paragraph{\bf {Flat norm of Radon measures}}  
{Let $n\ge 1$ be an integer and let $U\subset\R^n$ be a bounded and open set.} We denote by $\mathcal M_b(U)$ the space of Radon measures on $U$  {with finite total variation}. If $\mu\in \mathcal M_b(U)$, we denote by $|\mu|(U)$ the total variation of $\mu$\,. We recall that a sequence $\mu_k\in \mathcal M_b(U)$ converges tightly to $\mu\in \mathcal M_b(U)$ if $\mu_k$ converges to $\mu$ weakly* as measure, and $|\mu_k|(U)\rightarrow|\mu|(U)$.
We also introduce the concept of flat norm  of a measure $\mu$, {denoted} by $\|\mu\|_{\flt}$\,, {as}
\begin{align}\label{flat_norm_mu}
	{\|\mu\|_{\flt}:=\sup_{{\newatop{\ffi\in \Cc^{0,1}(U)}{\|\ffi\|_{C^{0,1}(U)}\le 1}}}\int_U\varphi \ud\mu\,.}
\end{align}
{Here and below, the Lipschitz norm $\|\ffi\|_{C^{0,1}(U)}$ is defined by
$$
\|\ffi\|_{C^{0,1}(U)}:=\|\ffi\|_{L^\infty(U)}+\sup_{\newatop{x,y\in U}{x\neq y}}\frac{|\varphi(x)-\varphi(y)|}{|x-y|}\,.
$$}
By a density argument we easily see that the supremum in  \eqref{flat_norm_mu} can be equivalently computed among smooth and compactly supported (in $U$) functions $\varphi$ with {$\|\ffi\|_{C^{0,1}(U)}\le 1$}\,.
\medskip
\paragraph{{\bf Flat norm of $k$-currents.}} {Let $n\ge 2$ be an integer and let $U\subset\R^n$ be an open set}.  For every  $k\in\N$ with $0\leq k\leq n$\,, we denote by $\mathcal D^k(U)$ the topological vector space of smooth and compactly supported $k$-forms on $U$, and by $\mathcal D_k(U)$ its dual, i.e., the space of $k$-currents on $U$.

The mass $|T|$ of a current $T\in \mathcal D_k(U)$ is defined as
$$
|T|=\sup\{\langle T,\omega\rangle:\; \omega\in \mathcal D^k(U),\|\omega\|_{L^\infty}\leq1\}\,.
$$
As {done in \eqref{flat_norm_mu} for measures}, we define the {\it flat norm} of a current $T\in\mathcal D_k(U)$ in $U$ by
\begin{equation}\label{flatcurr}
	\|T\|_{\flt,U}:=\sup_{\newatop{\omega\in\mathcal{D}^k(U)}{\|\omega\|_{F,U}\le 1}}\langle T,\omega\rangle,
\end{equation}
where
$$
\|\omega\|_{F,U}:=\|\omega\|_{L^\infty(U)}+\|\mathrm{d}\omega\|_{L^\infty(U)}\,.
$$
In the special case that $T$ is a $0$-current and has finite mass, then it can be standardly {identified} with a measure, and  the flat norm of $T$ coincides with the flat norm of the measure $T$ {defined in \eqref{flat_norm_mu}}.
\medskip
\paragraph{\bf Jacobian for $\Ss^1$-valued Sobolev maps} 
{Let $U\subset\R^2$ be a bounded and open set.}
 Given a map $u\in W^{1,1}(U;\Ss^1)$ we recall that  the distributional Jacobian $Ju=\textrm{Det}(\nabla u)$ of $u$ is defined by 
\begin{equation}\label{Det}
\langle Ju,\ffi\rangle_{U}:=\int_{U}\nabla\ffi\cdot \lambda_u\,\ud x,\qquad\qquad\textrm{for every }\ffi\in \Cc^\infty(U),
\end{equation} 
where \begin{align}\label{lambda_u}
	\lambda_u:=\frac12\Big(-u^1\frac{\partial u^2}{\partial x_2}+u^2\frac{\partial u^1}{\partial x_2};u^1\frac{\partial u^2}{\partial x_1}-u^2\frac{\partial u^1}{\partial x_1}\Big)\,;
\end{align}
notice that
 $\lambda_u\in L^1(U;\R^2)$. 
 
 Moreover, denoting by $j(u)\in L^1(U;\R^2)$ the {\it current} associated to $u$\,, i.e.,  
 \begin{equation}\label{ju}
 j(u):=\frac12(u^1\nabla u^2-u^2\nabla u^1)\,, 
 \end{equation}
 one has $j^\perp(u)=\lambda_u$ and $j(u)=\pi\nabla w$\,, where $w$ is a generic lifting of $u$\,, i.e., a map in $SBV^2(U)$ satisfying \eqref{fase}
and 
  $\nabla w$ is the approximate gradient of $w$.  Furthermore, it is easy to check that
 $$
Ju=-\mathrm{Div}\lambda_u=\mathrm{curl}\,  j(u)=\pi\mathrm{curl}(\nabla w),
 $$
 holds in the sense of distributions.

In the sequel we will use the fact that a function $u\in H^1(U;\Ss^1)$ satisfies $\textrm{Det}(\nabla u)=0$ in the sense of distributions. 
Moreover, if $u\in H^1(U\setminus\overline B;\Ss^1)$, where $B\subset U$ is a ball, then, integrating by parts,
$$\int_{U\setminus \overline B}\lambda_u\cdot \nabla \varphi\ud x=\int_{\partial B}\lambda_u\cdot {\nnu} \varphi \ud\mathcal H^1=\int_{\partial B}j(u)\cdot {\ttau} \varphi \ud\mathcal H^1,\qquad \textrm{for every } \varphi\in \Cc^\infty(U),$$
where ${\nnu}$ is the inner normal vector to $\partial B$, ${\ttau} =-{\nnu}^\perp$ is the counter-clockwise tangent vector to $\partial B$. 
Notice that $j(u)\cdot {\ttau} =\frac12(u^1\frac{\partial u^2}{\partial {\ttau} }-u^2\frac{\partial u^1}{\partial {\ttau} })$ on $\partial B$.

We recall that $\deg(u,\partial B)\in \mathbb Z$ is defined as
\begin{align}
\deg(u,\partial B):=\frac1\pi\int_{\partial B}j(u)\cdot {\ttau}  \ud\mathcal H^1=\frac1\pi\int_{\partial B}\lambda_u\cdot {\nnu} \ud\mathcal H^1\,,\label{def_grado}
\end{align}
whenever $u\in H^{\frac12}(\partial B;\Ss^1)$.
\medskip
\paragraph{\bf Jacobian for $\Ss^1$-valued $SBV$ maps} 
Let $U\subset\R^2$ be a bounded and open set.   
For any $p\in [1,+\infty)$ the symbol $SBV^p(U;\R^2)$ denotes the space of functions $u\in BV(A;\R^2)$ such that the Cantor part  $\mad^c u\equiv 0$\,, and  $\nabla u\in L^p(A;\R^{2\times 2})$, where $\nabla u$ is the density of $\mad^a u$\,, i.e.,  $\mad^a u:=\nabla u\mathcal L^2$\,.
The  space $SBV^p(U;\Ss^1)$ denotes the set of the functions $u\in SBV^p(U;\R^2)$ such that $|u|=1$ a.e. in $U$.

The following result, proven in \cite[Corollary 2.1]{DLSVG}, is specialized here to maps taking values in $\R^2$.
\begin{proposition}\label{JJ}
Let $u\in SBV(U;\R^2)\cap L^\infty(U;\R^2)$\,; then there exists a unique measure $\nu_u \in \mathcal M_b(U;\R^{2\times 2\times 2})$ such that, whenever $\{v_k\}_{k\in\N}\subset C^1(U;\R^2)\cap W^{1,1}(U;\R^2)\cap L^\infty(U;\R^2)$
	satisfies $\|v_k\|_{{L^\infty(U;\R^2)}}\leq C<+\infty$ for all $k\geq1$ and 
	$v_k\strictly u$ in $BV(U;\R^2)$\,, then {$v_k\otimes\nabla v_k\rightarrow  \nu_u$}\,, where $(\nu_u)_{j}^{i,h}$ is defined (for all $\ffi\in \Cc(U)$)
	by 
	\begin{equation*}
	\begin{aligned}
	\int_U\ffi(x)\ud(\nu_u)^{i,h}_j=&\int_{U\setminus \sal_u}\phi(x)u^h(x)\partial_{x_j}u^i(x)\ud x\\
	&+\frac 1 2\int_{\sal_u}\phi(x)(u^{h,+}(x)+u^{h,-}(x))(u^{i,+}(x)-u^{i,-}(x))\bm{\nu}_j(x)\ud\Huno(x)\,,
\end{aligned}
\end{equation*}
	 for every $i,j,h\in\{1,2\}$\,. 
	Finally, if $\{u_k\}_{k\in\N}\subset SBV(U;\R^2)\cap L^\infty(U;\R^2)$ with 
	\begin{equation}\label{Linftybound}
	\|u_k\|_{L^\infty(U;\R^2)}\leq C,
	\end{equation}
	 for some constant $C>0$, and $u_k\strictly u$ in $BV(U;\R^2)$\,, then 
	\begin{equation}\label{convdebstar}
	\nu_{u_k}\weakstar\nu_u\qquad\textrm{in }\mathcal M_b(U;\R^{2\times 2\times 2})\,.
	\end{equation}
\end{proposition}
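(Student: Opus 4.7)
The plan is to take the right-hand side of the stated formula as the \emph{definition} of $\nu_u$ and then to verify that it arises as the limit of $v_k\otimes\nabla v_k$ for any admissible approximating sequence, from which both existence and uniqueness follow. That $\nu_u$ so defined is a bounded Radon measure is immediate from $u\in L^\infty$, $\nabla u\in L^1(U;\R^{2\times 2})$, and the finiteness of the jump measure $|\D^j u|$ combined with the boundedness of the traces $u^{\pm}$.

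The key idea is the symmetric/antisymmetric splitting
\[
v_k^h \partial_{x_j} v_k^i \;=\; \tfrac12\,\partial_{x_j}\bigl(v_k^h v_k^i\bigr) \;+\; \tfrac12\bigl(v_k^h \partial_{x_j} v_k^i - v_k^i \partial_{x_j} v_k^h\bigr).
\]
Strict $BV$-convergence $v_k\strictly u$ combined with the uniform $L^\infty$-bound yields $v_k\to u$ in every $L^p$ by interpolation, so applying the $BV$ chain rule to the Lipschitz map $(x,y)\mapsto xy$ (truncated at the uniform bound) gives $v_k^h v_k^i \strictly u^h u^i$ in $BV$. Consequently $\partial_{x_j}(v_k^h v_k^i)\weakstar \D_{x_j}(u^h u^i)$ tightly, and by Vol'pert's formula the limit has absolutely continuous density $u^h\partial_{x_j} u^i + u^i\partial_{x_j} u^h$ on $U\setminus S_u$ plus jump $(u^{h,+}u^{i,+}-u^{h,-}u^{i,-})\nnu_j\,\Huno\res S_u$. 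Using the algebraic identity
\[
u^{h,+}u^{i,+}-u^{h,-}u^{i,-}=\tfrac12(u^{h,+}{+}u^{h,-})(u^{i,+}{-}u^{i,-})+\tfrac12(u^{i,+}{+}u^{i,-})(u^{h,+}{-}u^{h,-}),
\]
this produces exactly the symmetric-in-$(i,h)$ contribution to $\nu_u$.

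The antisymmetric residual $A_k := v_k^h\partial_{x_j}v_k^i - v_k^i\partial_{x_j}v_k^h$ vanishes when $i=h$, and for $i\neq h$ agrees (up to a sign) with twice the $j$-component of the current $j(v_k)$ of \eqref{ju}. The passage to the limit here cannot be handled by the $BV$ chain rule alone, because $\nabla v_k$ is only weakly$^\ast$ controlled as a vector measure and the product with $v_k$ is not continuous under this convergence. One invokes instead the minimal-lifting theory of \cite{Jerrard} (together with the lifting results for $\Ss^1$-valued maps recalled in the introduction), which shows that under strict $BV$ convergence with uniform $L^\infty$ bound, $j(v_k)$ converges to the prescribed diffuse$+$jump object, the jump density being proportional to $(u^{h,-}u^{i,+}-u^{h,+}u^{i,-})\nnu_j$. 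Adding this to the symmetric contribution reconstructs $(\nu_u)_j^{i,h}$. Taking $v_k = u*\rho_k$ as a particular admissible sequence yields uniqueness of $\nu_u$.

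Finally, the extension \eqref{convdebstar} to arbitrary $u_k\in SBV(U;\R^2)\cap L^\infty$ with $u_k\strictly u$ and the bound \eqref{Linftybound} follows by a diagonal argument: mollify each $u_k$ at scale $1/n$ to obtain $v_{k,n}\in C^1\cap W^{1,1}\cap L^\infty$, and choose $n(k)$ so that $v_{k,n(k)}$ is simultaneously $1/k$-close to $u_k$ in $L^1$ and in total variation, and such that $|\langle v_{k,n(k)}\otimes \nabla v_{k,n(k)} - \nu_{u_k},\varphi\rangle|<1/k$ on a countable dense family of test functions. Then $v_{k,n(k)}\strictly u$ with uniform $L^\infty$ bound, so the smooth case applied to this diagonal sequence gives $v_{k,n(k)}\otimes\nabla v_{k,n(k)}\weakstar \nu_u$, forcing $\nu_{u_k}\weakstar\nu_u$. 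The principal technical obstacle is therefore the antisymmetric part: it is precisely the Jacobian/lifting information not captured by the classical $BV$ chain rule, and its limit relies essentially on the lifting machinery of \cite{Jerrard}.
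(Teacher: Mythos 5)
The paper does not itself contain a proof of Proposition~\ref{JJ}; it is imported from \cite[Corollary~2.1]{DLSVG}, which (as the introduction of the present paper indicates) rests directly on the minimal-lifting theorem of Jerrard--Jung \cite{Jerrard}. That theorem controls expressions of the form $\psi(v_k)\,Dv_k$ for continuous $\psi$ under $v_k\strictly u$ with uniform $L^\infty$ bounds, producing the weak-* limit $\psi(u)\nabla u\,\Ld+\bigl(\int_0^1\psi(u^-+t(u^+-u^-))\,\mathrm{d}t\bigr)(u^+-u^-)\otimes\nnu\,\Huno\res S_u$; with $\psi(y)=y^h$ one has $\int_0^1(u^-+t(u^+-u^-))^h\,\mathrm{d}t=\tfrac12(u^{h,+}+u^{h,-})$, and this is exactly $\nu_u$ in one stroke. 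Your symmetric/antisymmetric decomposition is therefore a detour: the antisymmetric piece you correctly identify as the technical heart requires precisely this same Jerrard--Jung input, which already yields all of $\nu_u$, so the separate chain-rule treatment of the symmetric part saves nothing. Worse, the way you invoke it mislabels the tool: the appeal to ``lifting results for $\Ss^1$-valued maps'' is a red herring, since $u$ here is a general $BV\cap L^\infty$ map with no $\Ss^1$ constraint, and the D\'avila--Ignat \emph{phase} lifting is unrelated to the Jerrard--Jung \emph{minimal} lifting (a Radon measure on $U\times\R^m$ attached to any $BV$ map). As written, the convergence of the antisymmetric part --- the core of the proof --- is not actually established, only gestured at with the wrong reference.

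There is also a concrete false step. The claim that $v_k^hv_k^i\strictly u^hu^i$ does not hold: strict $BV$ convergence is not preserved under composition with $C^1$ (or Lipschitz) maps even with uniform $L^\infty$ bounds. On $U=(0,1)$ take $u=\mathrm{sgn}(1/2-\cdot)$, $v_k=u*\rho_{1/k}$, and $f(y)=y^2$: then $f(u)\equiv 1$ so $|Df(u)|(U)=0$, whereas $|Df(v_k)|(U)\to 2$, so $f(v_k)\not\strictly f(u)$. What your argument actually uses --- and what is true --- is the weaker statement $D(v_k^hv_k^i)\weakstar D(u^hu^i)$, which follows from $v_k^hv_k^i\to u^hu^i$ in $L^1$ together with the uniform bound $|D(v_k^hv_k^i)|(U)\le 2C|Dv_k|(U)$. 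So the symmetric-part step survives, but the stated strict convergence should be retracted.
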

In the following, for every map $u\in SBV(U;\R^2)\cap L^\infty(U;\R^2)$\,, we set
\begin{equation*}
[u^h\mad_{j}u^i]:=(\nu_u)^{i,h}_j,\;\;\;\;i,j,h\in\{1,2\}\,,
\end{equation*}
For any map $u\in SBV(U;\R^2)\cap L^{\infty}(U;\R^2)$ we introduce the $1$-current $T_u$ defined by 
\begin{equation}\label{unocorr}
\begin{aligned}
T_u:=&\frac12(-[u^1\mad_2u^2]+[u^2\mad_2u^1];[u^1\mad_1u^2]-[u^2\mad_1u^1])\\
=&\frac12(-u^1\partial_{x_2}u^2+u^2\partial_{x_2}u^1;u^1\partial_{x_1}u^2-u^2\partial_{x_1}u^1)+\frac12(u^+\wedge u^-)\bm\tau\res S_{u}\\
=:&\,T_u^{D}+T_u^S\,.
\end{aligned}
\end{equation}
where we have noted $\alpha\wedge \beta=-\alpha\cdot\beta^\perp=\det(\alpha,\beta).$
Notice that $T_u^{D}\in L^1(U;\R^2)$ and that, if $u\in W^{1,1}(U;\Ss^1)$\,, then $T_u=T_u^{D}=\lambda_u$ with $\lambda_u$ defined in \eqref{lambda_u}.
Finally, we highlight that if $u\in SBV(U;\Ss^1)$\,, for any lifting $w\in SBV(U)$ of $u$, i.e., satisfying \eqref{fase}, it holds that 
\begin{equation}\label{perlift}
\Td_u:=\pi\nabla^{\perp}w;\qquad\qquad T^S_u:=\frac 1 2\sin(2\pi(w^{-}-w^+))\bm\tau\res S_{u}\,.
\end{equation}
The distributional Jacobian $Ju\in \mathcal D_0(U)$ of $u$ is defined as the  boundary of $T_u$, namely
\begin{align}
Ju:=\partial T_u\qquad \qquad \text{in }\mathcal D_0(U).
\end{align}
Essentially by definition, it easily follows that 
$$\|Ju\|_{\text{{\rm flat}},U}\leq C\|u\|_{BV},$$
for all $u\in SBV(\Om;\Su)$, for a universal constant $C>0$. 
\begin{remark}
\rm{
We point out that in  general $Ju$ is not a Radon measure. This notion of Jacobian determinant was first introduced in \cite{M} under some special hypotheses on $u$. Under these hypotheses it turns out that $Ju$ is also a Radon measure. 
}
\end{remark}
\section{Density results in $SBV^p(\Omega;\Su)$}\label{sec:density}
In this section we prove that any function $u\in SBV^p(\Omega;\Su)$ can be approximated - in the strong $BV$ norm - by (sequence of) functions in $SBV^p(\Omega;\Su)$ with closed jump set. As a consequence (see Corollary \ref{cor:21041335}), we deduce that the corresponding currents can be approximated in the flat norm.

In what follows for every function $\ffi\in SBV^p(\Omega)$ (with $p\ge 1$) we denote by $\sal_\ffi^\fff$ the {\it fractional jump set} of $\ffi$\,, i.e., $\sal^\fff_\ffi:=\{x \in \sal_\ffi \colon [\ffi] \notin \Z\}$\,, and by  $\sal_\ffi^\iii$ the {\it integer jump set}\,, namely, $\sal_\ffi^\iii:=\sal_\ffi \sm \sal_\ffi^\fff$\,.
The main result of this section is the following.
\begin{theorem}\label{thm:densitySBVaux}
Let $\Omega\subset \R^2$ be a bounded open set with finite perimeter, $p\in (1,+\infty)$, and $\varepsilon>0$. Then for every $\ffi \in SBV^p(\Omega)$  there exist:
\begin{itemize}
\item closed sets $\Gamma^\iii=\Gamma^\iii_\ep$, $\Gamma^\fff=\Gamma^\fff_\ep$, finite unions of disjoint $C^1$ curves;
\item a set $\widetilde{\omega}=\widetilde{\omega}_\ep$, finite union of cubes;
\item a set of finite perimeter $\widehat{\omega}=\widehat{\omega}_\ep$;
\item a function $\theta=\theta_\ep \in SBV^p(\Omega) \cap W^{1,p}(\Omega \sm ({\Gamma^\iii\cup\Gamma^\fff} \cup \overline{\widetilde{\omega}}))$
;
\end{itemize}
such that 
\begin{equation}\label{basic1}
\{\nabla \ffi\neq \nabla \theta\} \subset \widetilde{\omega}\cup \widehat{\omega}, \quad \Ld(\{\ffi\neq \theta\})<\varepsilon, \quad \theta=0\textrm{ in }\widetilde{\omega}, 
\end{equation}
$[\theta](x)\in \Z$ for $\Huno$-a.e.\ $x\in \Gamma^\iii$\,,
\begin{equation}\label{basic2}
|\Huno(\sal^\iii_\ffi)-\Huno(\Gamma^\iii)| + \Huno(\sal^\fff_\ffi \triangle \Gamma^\fff)+\Huno(\partial \widetilde{\omega}) + \Huno(\partial^* \widehat{\omega}) \leq \varepsilon,
\end{equation}
and
\begin{equation}\label{basic3}
 \int_{\Omega} |\nabla \theta|^p \ud x  \leq (1+\varepsilon)  \int_\Omega |\nabla \ffi|^p \ud x.
\end{equation}
Moreover, $\Huno(\Gamma^\fff \cap \{\theta^+ \neq \ffi^+\})+\Huno(\Gamma^\fff \cap \{\theta^- \neq \ffi^-\}) \leq \varepsilon$, where $\theta^\pm$ and $\ffi^\pm$ denote the traces of $\theta$ and $\ffi$ on the two sides of $\Gamma$.
\end{theorem}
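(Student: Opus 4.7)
\emph{Strategy and decomposition.} The plan follows the blueprint sketched in the Introduction: reduce to the small-jump-set density result of \cite{CFI18} (which in two dimensions has the crucial feature that the approximant coincides with the original function in a neighborhood of each cube boundary) by first transferring the jump of $\ffi$ onto a finite union of straight segments, at scales adapted to the local geometry of $S_\ffi$. First I would split $S_\ffi = \sal^\iii_\ffi \cup \sal^\fff_\ffi$ and cover each, up to $\Huno$-measure less than $\eta$ (with $\eta$ to be chosen small in terms of $\varepsilon$), by finite families of pairwise disjoint $C^1$ arcs; on each integer arc the jump value can be taken constantly equal to some $n \in \Z$ by a further splitting along the countable integer levels. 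For $\Huno$-a.e.\ point $x$ of such an arc, by rectifiability of $S_\ffi$ and the definition of approximate unilateral traces there exist arbitrarily small scales $r>0$ such that a square $Q_x$ of sidelength $2r$ centered at $x$, with sides parallel and perpendicular to the tangent at $x$, enjoys (i) the arc lies within a vertical $o(r)$-neighbourhood of the midline $S_x := Q_x \cap \{x_2 = 0\}$, (ii) $\Huno((S_\ffi \sm \text{arc}) \cap Q_x) \ll r$, (iii) the unilateral traces of $\ffi$ along the arc in $Q_x$ differ, up to a small average error, by the constant $n$. A Besicovitch/Vitali selection extracts a finite disjoint family $\{Q_j\}_{j\in J}$ covering each arc up to $\Huno$-measure $<\eta$.

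\emph{Local construction on each square.} On each $Q_j$ centered at an integer-arc point with jump $n_j$, I would carry out a two-stage replacement. \emph{Stage (a):} transfer the local jump set inside $Q_j$ to the flat segment $S_j$ by the two-dimensional device drawn from \cite{CFI18}, producing an auxiliary function $\ffi_j$ that coincides with $\ffi$ in a neighbourhood of $\partial Q_j$, has jump exactly $n_j$ on $S_j$, and whose residual jump set is small with respect to the sidelength of every subsquare of $Q_j$ with a side contained in $S_j$. \emph{Stage (b):} apply the small-jump density result of \cite{CFI18} on a Whitney-type cover of $Q_j$ by cubes with sides parallel to $S_j$; the boundary-coincidence property of the two-dimensional version ensures that the local approximants glue into a function $\theta_j \in SBV^p(Q_j) \cap W^{1,p}(Q_j \sm (S_j \cup \ol{\widetilde\om_j}))$ with jump exactly $n_j$ on $S_j$, equal to $\ffi$ near $\partial Q_j$, and satisfying the quantitative estimates \eqref{basic1}--\eqref{basic3} locally. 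For squares centered at points of fractional arcs I would apply directly \cite[Theorem~5.1]{CCS22}, obtaining a $\theta_j$ whose jump is on a straight segment $S_j$ close in $\Huno$ to the original arc and whose traces across $S_j$ are close to those of $\ffi$.

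\emph{Gluing and estimates.} Define $\theta := \theta_j$ on each $Q_j$ and $\theta := \ffi$ elsewhere, set $\Gamma^\iii := \bigcup_{j\in J^\iii} S_j$, $\Gamma^\fff := \bigcup_{j\in J^\fff} S_j$, and let $\widetilde\om$, $\widehat\om$ be the unions of the local sets $\widetilde\om_j$, $\widehat\om_j$ produced by the construction. The boundary coincidence guarantees $\theta \in SBV^p(\Om)$ without creating spurious jumps across $\partial Q_j$; summing the local estimates and the $\eta$-contribution from the uncovered portion of $S_\ffi$, and tuning $\eta$ together with the tolerances in \cite{CFI18} and \cite{CCS22} suitably small in terms of $\varepsilon$, yields \eqref{basic1}, \eqref{basic2}, \eqref{basic3}, together with the trace matching on $\Gamma^\fff$. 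On $\Gamma^\iii$ the jump of $\theta$ equals $n_j$ on each $S_j$ by construction, and is therefore integer.

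\emph{Main obstacle.} The hard part is Stage (a) of the local construction: for an arc crossing $Q_j$ almost horizontally with integer jump $n_j$, producing an auxiliary function that rigidly transfers the jump to the flat segment $S_j$ while (1) preserving the integer jump value $n_j$, (2) matching $\ffi$ in a neighbourhood of $\partial Q_j$ so that the subsequent gluing cannot create non-integer jumps across square boundaries, and (3) ensuring that the residual jump set is $\Huno$-small not just in $Q_j$ but on every subsquare with a side on $S_j$, so that the small-jump approximation of \cite{CFI18} is applicable at every Whitney scale. This forces the delicate choice of scale $r$ at which the arc is both geometrically flat and the unilateral traces are essentially constant equal to the two sides of an $n_j$-jump; once Stage (a) is achieved, Stage (b) and the final gluing are essentially a Whitney-type repetition of the known two-dimensional construction.
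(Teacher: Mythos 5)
Your outline follows the strategy the paper itself sketches in the Introduction, and Stage~(b), the treatment of fractional-arc cubes, and the final gluing are broadly the right picture. However, Stage~(a) --- which you correctly identify as the crux --- contains a requirement that cannot be met as stated, and you leave it unresolved. You ask for an auxiliary function $\ffi_j$ on $Q_j$ that simultaneously (i) has constant integer jump $n_j$ along the full flat segment $S_j$ crossing $Q_j$, and (ii) coincides with $\ffi$ in a neighbourhood of $\partial Q_j$. These two demands are incompatible wherever $S_j$ and the almost-flat arc $\Gamma_j$ separate near $\partial Q_j$: if $\ffi_j=\ffi$ near $\partial Q_j$, then near $\partial Q_j$ the jump of $\ffi_j$ sits on $\Gamma_j$, not on $S_j$, so $\ffi_j$ cannot carry a constant jump $n_j$ along all of $S_j$. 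Also, \cite{CFI18} contains no device for transporting an integer jump onto a segment; what one does borrow from \cite{CFI18} is the Vitali-type selection of the segment so that the residual jump set is small at every dyadic scale, which is your requirement~(3) and which you describe correctly.

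The paper resolves the obstacle with a simple but essential shift: $\widehat{u}_j := u + z_j\chi_{H_j}$, where $H_j$ is the region enclosed by the arc $\Gamma_j$, the chosen flat segment $T_j^{\gamma_j}$, and two short segments $\Sigma_j^1,\Sigma_j^2\subset\partial Q_j$ joining their endpoints. This transports the integer jump exactly onto $T_j^{\gamma_j}$, but it does \emph{not} coincide with $u$ near all of $\partial Q_j$: the mismatch on $\partial Q_{j,\widehat r_j}$ is precisely $\Sigma_{j,\widehat r_j}^1\cup\Sigma_{j,\widehat r_j}^2$, whose $\Huno$-measure is controlled by the small Lipschitz constant of $\Gamma_j$ and by the closeness of $\gamma_j$ to $0$. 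A dyadic Whitney subdivision of $Q^\pm_{j,\widehat r_j}$ refining towards $\partial Q^\pm_{j,\widehat r_j}$, followed by Proposition~\ref{prop:analoga32CFI} on each small cube and a partition-of-unity gluing, then produces $v_j$ with $v_j=\widehat u_j$ on $\partial Q_{j,\widehat r_j}$ and jump exactly $z_j$ on $T_j^{\gamma_j}$; the spurious jump created across the $\Sigma$'s is a quantitatively small term that is absorbed in the global Step~3 gluing. Without replacing your unattainable boundary-coincidence condition by this explicit construction-and-control of the boundary mismatch, the bookkeeping for \eqref{basic2} and the integrality of $[\theta]$ on $\Gamma^\iii$ do not actually follow from your sketch.
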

Before providing the proof of Theorem \ref{thm:densitySBVaux} we state and prove our desired approximation results for maps in $SBV^p(\Omega;\Su)$.
\begin{corollary}\label{thm:densitySBV}
Let $\Omega\subset \R^2$ be a bounded open set of finite perimeter, $p\in (1,+\infty)$, and $\varepsilon>0$. Then for every $u \in SBV^p(\Omega;\Su)$ there exist:
\begin{itemize}
\item a closed set $\Gamma=\Gamma_\ep$, finite union of disjoint $C^1$ curves;
\item a set $\widetilde{\omega}=\widetilde{\omega}_\ep$, finite union of cubes;
\item a set of finite perimeter $\widehat{\omega}=\widehat{\omega}_\ep$;
\item a function $v=v_\ep \in SBV^p(\Omega; \Su) \cap W^{1,p}(\Omega \sm (\Gamma \cup \overline{\widetilde{\omega}}); \Su)$;
\end{itemize}
such that 
\begin{equation}\label{basic0}
\{\nabla u\neq \nabla v\} \subset  \widetilde{\omega}\cup \widehat{\omega},\quad \Ld(\{u \neq v\}) <\ep, \quad \nabla v=0\textrm{ $\Ld$-a.e. in } \widetilde{\omega}, 
\end{equation}
and
\begin{equation}\label{basic00}
\begin{split}
\Huno(\sal_u \triangle \Gamma)+\Huno(\partial \widetilde{\omega}) + \Huno(\partial^* \widehat{\omega}) 
\leq \varepsilon, \quad \int_{{\Omega}} |\nabla v|^p \ud x  \leq (1+\varepsilon)  \int_\Omega |\nabla u|^p \ud x.
\end{split}
\end{equation}
Moreover, $\Huno(\Gamma \cap \{v^+ \neq u^+\})+\Huno(\Gamma \cap \{v^- \neq u^-\}) \leq \varepsilon$, where $v^\pm$ and $u^\pm$ denote the traces of $v$ and $u$ on the two sides of $\Gamma$.
\end{corollary}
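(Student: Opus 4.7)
The plan is to apply Theorem~\ref{thm:densitySBVaux} to a lifting $\ffi$ of $u$ and then compose back with the complex exponential. First, I would invoke the lifting result of \cite{DI} to obtain $\ffi\in SBV^p(\Omega)$ with $u=e^{2\pi\imath\ffi}$ and $\pi\|\ffi\|_{BV}\le\|u\|_{BV}$, and observe that $\sal_u$ coincides, up to an $\Huno$-null set, with the fractional jump set $\sal_\ffi^\fff$, since $e^{2\pi\imath[\ffi]}=1$ exactly when $[\ffi]\in\Z$. Next, I would apply Theorem~\ref{thm:densitySBVaux} to $\ffi$ with a parameter $\varepsilon'>0$ chosen small in terms of $\varepsilon$, producing $\theta$, $\Gamma^\iii$, $\Gamma^\fff$, $\widetilde{\omega}$, $\widehat{\omega}$, and then define $v:=e^{2\pi\imath\theta}$ and $\Gamma:=\Gamma^\fff$.

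Most of the required properties would then follow directly. Clearly $v\in SBV^p(\Omega;\Su)$; across $\Gamma^\iii$ the integer jump of $\theta$ produces no jump of $v$, so $v$ is continuous there and thus belongs to $W^{1,p}(\Omega\sm(\Gamma\cup\overline{\widetilde{\omega}});\Su)$, while on $\widetilde{\omega}$ the identity $\theta\equiv 0$ gives $v\equiv 1$ and $\nabla v=0$. From $|\nabla v|=2\pi|\nabla\theta|$ and $|\nabla u|=2\pi|\nabla\ffi|$, the energy inequality in \eqref{basic00} reduces to \eqref{basic3}. The trace condition on $\Gamma$ would transfer from $\ffi,\theta$ to $u,v$ via the bijection $e^{2\pi\imath\cdot}\colon[0,1)\to\Su$, whereas $\Huno(\sal_u\triangle \Gamma)$ equals $\Huno(\sal_\ffi^\fff\triangle \Gamma^\fff)$ up to $\Huno$-null sets, hence is bounded by $\varepsilon'$ thanks to \eqref{basic2}. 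Finally $\Ld(\{u\ne v\})\le \Ld(\{\ffi\ne\theta\})<\varepsilon'$.

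The delicate point, which I expect to be the main obstacle, is the set inclusion $\{\nabla u\ne\nabla v\}\subset \widetilde{\omega}\cup\widehat{\omega}$. By the chain rule $\nabla u=2\pi\imath u\nabla \ffi$, $\nabla v=2\pi\imath v\nabla \theta$ together with $\nabla\ffi=\nabla\theta$ on $\Omega\sm(\widetilde{\omega}\cup\widehat{\omega})$ from \eqref{basic1}, this reduces to showing $u=v$, equivalently $\ffi-\theta\in\Z$, outside $\widetilde{\omega}\cup\widehat{\omega}$. The construction in Theorem~\ref{thm:densitySBVaux} alters $\ffi$ only on $\widetilde{\omega}$ (where $\theta$ is set to $0$) and on $\widehat{\omega}$ (where $\ffi$ is locally readjusted), so one expects $\ffi=\theta$ on the complement; equivalently, on each connected component $C$ of $\Omega\sm(\overline{\widetilde{\omega}}\cup\overline{\widehat{\omega}}\cup\Gamma^\iii\cup\Gamma^\fff\cup\overline{\sal_\ffi})$ the function $\ffi-\theta$ has vanishing approximate gradient and integer jumps, hence is constant on $C$, and any component on which this constant fails to be an integer has total Lebesgue measure bounded by $\Ld(\{\ffi\ne\theta\})<\varepsilon'$ and finite perimeter controlled by \eqref{basic2}, so such components can be absorbed into $\widehat{\omega}$ at the price of a small increment of $\Huno(\partial^*\widehat{\omega})$. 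A suitable choice of $\varepsilon'$ relative to $\varepsilon$ then yields all the bounds in \eqref{basic00}. The substantive task in this last step is the bookkeeping between the integer and fractional parts of the jumps of the lifting $\ffi$, which is exactly what the separation of $\Gamma^\iii$ from $\Gamma^\fff$ in Theorem~\ref{thm:densitySBVaux} is tailored to handle.
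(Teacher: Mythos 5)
Your approach is the same as the paper's: lift $u$ to $\ffi\in SBV^p(\Omega)$ via \cite{DI}, apply Theorem~\ref{thm:densitySBVaux} to $\ffi$, set $v:=e^{2\pi\imath\theta}$ and $\Gamma:=\Gamma^\fff$, and transfer the conclusions through the identification $\sal_u=\sal_\ffi^\fff$ (up to $\Huno$-null sets) and the chain rule. You are also right to single out $\{\nabla u\neq\nabla v\}\subset\widetilde\omega\cup\widehat\omega$ as the delicate point; the paper disposes of it in one line by asserting $\{\nabla\ffi=\nabla\theta\}\equiv\{\nabla u=\nabla v\}$, but this identity is not automatic: at a point where $\nabla\ffi=\nabla\theta\neq 0$ while $\ffi-\theta\notin\Z$ one has $\nabla u=2\pi\imath u\nabla\ffi\neq 2\pi\imath v\nabla\theta=\nabla v$.

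The correct resolution is the one you suggest first, suitably corrected: the construction in the proof of Theorem~\ref{thm:densitySBVaux} guarantees that $\theta-\ffi$ is $\Ld$-a.e.\ \emph{integer-valued} (not zero, as you write) outside $\widetilde\omega\cup\widehat\omega$. Indeed, from \eqref{2405231920}, \eqref{2905232231}, \eqref{3005231926}, \eqref{3005232001} one reads that off $\widetilde\omega\cup\widehat\omega$ the function $\theta$ coincides with $\widehat{v}$, and $\widehat{v}-\ffi\in\{0,z_j\}\subset\Z$; hence $u=v$ there, and together with $\nabla\ffi=\nabla\theta$ this yields the inclusion. Since this property is not among the stated conclusions of Theorem~\ref{thm:densitySBVaux}, it must be invoked explicitly (or added to the theorem's statement). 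Your backup argument via connected components would not repair the step if that property were unavailable: (i) $\overline{\widehat\omega}$ and $\overline{\sal_\ffi}$ are closures of sets of merely finite $\Huno$-measure and may have positive or even full $\Ld$-measure, so $\Omega\sm(\overline{\widetilde\omega}\cup\overline{\widehat\omega}\cup\Gamma^\iii\cup\Gamma^\fff\cup\overline{\sal_\ffi})$ can be empty; (ii) on components of a looser complement, $\ffi-\theta$ need not be constant because $\nabla(\ffi-\theta)$ can be nonzero on $\widehat\omega$; and (iii) nothing in \eqref{basic1}--\eqref{basic2} controls the perimeter of $\{\ffi-\theta\notin\Z\}$, which is what absorbing those components into $\widehat\omega$ would require.
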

\begin{proof}
Let $u\in SBV^p(\Omega;\Su)$ and let $\ep>0$\,. Then, by \cite[Theorem 1.1]{DI}, there exists $\ffi\in SBV^p(\Omega)$ such that $u=e^{2\pi\imath\ffi}$ with $\pi\|\ffi\|_{BV}\le \|u\|_{BV}$\,. Let $\Gamma^\iii,\,\Gamma^\fff,\, \widetilde\omega,\, \widehat\omega$ be the sets and let $\theta$ be the function provided by Theorem \ref{thm:densitySBVaux}\,. We set $v:=e^{2\pi\imath\theta}$\,. Then, 
$\{\nabla \ffi=\nabla\theta\}\equiv\{\nabla u=\nabla v\}$ $\Ld$-a.e. and
$\{\ffi=\theta\}\subset \{u=v\}$\,, so that, by \eqref{basic1} we immediately deduce \eqref{basic0}.
Furthermore, since $\sal_\ffi^\fff\equiv \sal_u$\,, taking $\Gamma=\Gamma^\fff$, by Theorem \ref{thm:densitySBVaux} we deduce also the last part of the claim.
\end{proof}
\begin{proposition}\label{cor:21041335}
Let $\Omega\subset \R^2$ be a bounded open set with finite perimeter, let $p\in (1,+\infty)$, and let $u \in SBV^p(\Omega;\Su)$. Then there exists $\{u_n\}_{n\in\N} \subset SBV^p(\Omega;\Su)$ with $\Huno(\overline{\sal}_{u_n}\sm \sal_{u_n})=0$ for all $n$ such that
\begin{equation}\label{21041335}
\begin{aligned}
&\|u_n - u\|_{BV(\Omega;\R^2)} \to 0,\\
&\|\nabla u_n\|_{L^p(\Omega;\R^{2\times 2})}\rightarrow \|\nabla u\|_{L^p(\Omega;\R^{2\times 2})},\\
&\mathcal H^1(S_{u_n})\rightarrow \mathcal H^1(S_{u}).
\end{aligned}
\end{equation}
Furthermore
\begin{equation}\label{2104231340}
|T_{u_n} - T_u|(\Om) \to 0,
\end{equation}
where $T_u$ and $T_{u_n}$ are the measures provided by \eqref{unocorr}; in particular
$$\|Ju-Ju_n\|_{\mathrm{flat},\Omega}\to 0,$$ 
with $\|\cdot\|_{\flt, \Omega}$ is the norm defined in \eqref{flatcurr}.
\end{proposition}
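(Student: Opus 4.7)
The plan is to apply Corollary~\ref{thm:densitySBV} with $\ep_n := 1/n$ and set $u_n := v_{\ep_n}$, $\Gamma_n := \Gamma_{\ep_n}$, $\widetilde\omega_n := \widetilde\omega_{\ep_n}$, $\widehat\omega_n := \widehat\omega_{\ep_n}$. The inclusion $S_{u_n}\subset\Gamma_n\cup\partial\widetilde\omega_n$ (a finite union of $C^1$ curves and cube edges), combined with the explicit structure of $u_n$ on these pieces, yields the essentially closed property $\Huno(\overline S_{u_n}\sm S_{u_n})=0$. The remainder of the argument is a quantitative bookkeeping of the estimates in the corollary.

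\emph{Strong $BV$ convergence.} Since $|u_n-u|\leq 2$ and $\Ld(\{u_n\neq u\})<1/n$, we have $\|u_n-u\|_{L^1}\to 0$. For the total variation, split into diffuse and jump parts. The diffuse part is controlled since $\nabla u_n=\nabla u$ outside $\widetilde\omega_n\cup\widehat\omega_n$, and both sets have vanishing Lebesgue measure: $\Ld(\widetilde\omega_n)\to 0$ from $\Huno(\partial\widetilde\omega_n)\leq 1/n$, while the two-dimensional isoperimetric inequality applied to $\Huno(\partial^*\widehat\omega_n)\leq 1/n$ gives $\Ld(\widehat\omega_n)\leq C/n^2$. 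H\"older's inequality with $p>1$ and the uniform bound $\|\nabla u_n\|_{L^p}^p\leq(1+1/n)\|\nabla u\|_{L^p}^p$ then give $\|\nabla u_n-\nabla u\|_{L^1}\to 0$. For the jump part, $S_{u_n-u}\subset S_u\cup\Gamma_n\cup\partial\widetilde\omega_n$ and $|[u_n-u]|\leq 4$, combined with $\Huno(S_u\triangle\Gamma_n)\leq 1/n$, $\Huno(\partial\widetilde\omega_n)\leq 1/n$, and $\Huno(\Gamma_n\cap\{u_n^+\neq u^+\})+\Huno(\Gamma_n\cap\{u_n^-\neq u^-\})\leq 1/n$, show $|D^j(u_n-u)|(\Omega)=O(1/n)$.

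\emph{Convergence of $\|\nabla u_n\|_{L^p}$ and $\Huno(S_{u_n})$.} Using $\nabla u_n=0$ on $\widetilde\omega_n$ and $\nabla u_n=\nabla u$ outside $\widetilde\omega_n\cup\widehat\omega_n$,
\begin{equation*}
\int_\Omega|\nabla u_n|^p = \int_{\widehat\omega_n}|\nabla u_n|^p + \int_{\Omega\sm(\widetilde\omega_n\cup\widehat\omega_n)}|\nabla u|^p.
\end{equation*}
Together with $\int|\nabla u_n|^p\leq(1+1/n)\int|\nabla u|^p$ and dominated convergence on the second term, this forces $\int_{\widehat\omega_n}|\nabla u_n|^p\to 0$ and hence $\|\nabla u_n\|_{L^p}\to\|\nabla u\|_{L^p}$. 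For the jump-set measure, $|\Huno(S_{u_n})-\Huno(S_u)|\leq|\Huno(\Gamma_n)-\Huno(S_u)|+\Huno(\partial\widetilde\omega_n)\leq 2/n$.

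\emph{Current convergence.} Decompose $T=T^D+T^S$. The density of $T^D_{u_n}$ is a product of an $\Su$-valued factor with $\nabla u_n$; since $u_n\to u$ in every $L^q$ (from $L^1$ convergence plus the bound $|u_n|=1$) and $\nabla u_n\to\nabla u$ in $L^1$, H\"older yields $\|T^D_{u_n}-T^D_u\|_{L^1}\to 0$. The density of $T^S$ is bounded by $1$, so $|T^S_{u_n}-T^S_u|(\Omega)$ is controlled by $\Huno(S_{u_n}\triangle S_u)+\Huno(\Gamma_n\cap\{u_n^+\neq u^+\})+\Huno(\Gamma_n\cap\{u_n^-\neq u^-\})\to 0$. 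Finally $\|Ju_n-Ju\|_{\flt,\Omega}\leq|T_{u_n}-T_u|(\Omega)$, since $\partial$ is $1$-Lipschitz from mass to flat norm on currents. The most delicate ingredient is the isoperimetric bound $\Ld(\widehat\omega_n)=O(n^{-2})$: only that allows H\"older's inequality to tame the $\widehat\omega_n$ contribution and drive the diffuse error to zero uniformly.
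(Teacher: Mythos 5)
Your proposal follows the paper's argument in all essentials: both invoke Corollary~\ref{thm:densitySBV} with $\ep_n=1/n$, control the volume of $\widetilde\omega_n\cup\widehat\omega_n$ via the planar isoperimetric inequality, use H\"older with $p>1$ for the diffuse gradient, triangle-inequality bookkeeping on $\Gamma_n$ for the jump part, the decomposition $T=T^{D}+T^{S}$ for \eqref{2104231340}, and the observation that $\|\partial T\|_{\flt}\leq |T|$. One small imprecision: your chain $|\Huno(S_{u_n})-\Huno(S_u)|\leq|\Huno(\Gamma_n)-\Huno(S_u)|+\Huno(\partial\widetilde\omega_n)$ tacitly requires $|\Huno(S_{u_n})-\Huno(\Gamma_n)|\leq\Huno(\partial\widetilde\omega_n)$, which controls only the excess $\Huno(S_{u_n})-\Huno(\Gamma_n)$; the reverse excess $\Huno(\Gamma_n\sm S_{u_n})$ must instead be bounded using $\Huno(S_u\triangle\Gamma_n)$ and the trace-agreement estimate, as the paper does implicitly via $\Huno(S_u\triangle S_{u_n})\leq 2/n$, so the conclusion holds with a slightly worse constant.
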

\begin{proof}
Let $u\in SBV^p(\Omega;\Su)$ be fixed. For every $n\in\N$ let $u_n$ be the function provided by Corollary \ref{thm:densitySBV} for $\ep=\ep_n=\frac{1}{n}$\,.
%
By \eqref{basic0}
, we have that, for all $s\in [1,+\infty)$,
\begin{equation}\label{10112023_0}
\mathcal L^2(\{u\neq u_n\})\leq \frac1n,\qquad \|u-u_n\|_{L^s(\Omega;\R^2)}\le \frac{1}{n^{\frac1s}},
\end{equation}
and
\begin{equation}\label{ragione_vito}
\begin{aligned}
\|\nabla u-\nabla u_n\|_{L^1(\Omega;\R^{2\times 2})}\le&\, \big(\|\nabla u\|_{L^p(\Omega;\R^{2\times 2})}+\|\nabla u_n\|_{L^p(\Omega;\R^{2\times 2})}\big) \big(|\widetilde\omega_n|+|\widehat\omega_n|\big)^{\frac{1}{p'}}\\
\le&\, \frac{C}{n^2}\|\nabla u\|_{L^p(\Omega;\R^{2\times 2})}\,.
\end{aligned}
\end{equation}
Furthermore, since
\begin{equation*}
\D^{\mathrm{S}}u=(u^+-u^-)\otimes\nnu_u\res\sal_u\ud\Huno,\qquad \qquad \D^{\mathrm{S}}u_n=(u_n^+-u_n^-)\otimes\nnu_{u_n}\res\sal_{u_n}\ud\Huno\,,
\end{equation*}
by triangle inequality, using that $\nnu_u=\nnu_{u_n}$ on $\sal_{u}\cap\sal_{u_n}$, we get
\begin{equation}\label{partesalto}
\begin{aligned}
|\D^{\mathrm{S}}u-\D^{\mathrm{S}}u_n|(\Omega)\le&\, \big(|u^+-u^+_n|+|u^--u^-_n|\big)\Huno(\sal_u\cap\sal_{u_n})\\
&\,+|u^+-u^-|\Huno(\sal_u\setminus\sal_{u_n})+|u_n^+-u_n^-|\Huno(\sal_{u_n}\setminus\sal_u)\\
\le&\,4\Huno(\Gamma_n\cap\{u_n^{+}\neq u^{+}\})+4\Huno(\Gamma_n\cap\{u_n^{-}\neq u^{-}\})+2\Huno(\sal_u\triangle\sal_{u_n})\\
\le&\,\frac{C}{n},
\end{aligned}
\end{equation}
where in the last inequality we have used \eqref{basic00} and the fact that $\sal_{u_n}\subset\Gamma_n\cup \partial\widetilde\omega_n$, to deduce that
\begin{equation*}
\begin{aligned}
\Huno(\sal_u\triangle\sal_{u_n})\le&\, \Huno(\Gamma_n\cap(\sal_u\setminus\sal_{u_n}))+\Huno(\sal_u\triangle\Gamma_n)+\Huno(\partial\widetilde\omega_n)\\
\le&\,\Huno(\Gamma_n\cap\{u_n^{+}\neq u^{+}\})+\Huno(\Gamma_n\cap\{u_n^{-}\neq u^{-}\})+\Huno(\sal_u\triangle\Gamma_n)+\Huno(\partial\widetilde\omega_n)
\le \frac{2}{n}.
\end{aligned}
\end{equation*}
By \eqref{10112023_0}, \eqref{ragione_vito}, \eqref{partesalto}, we immediately deduce the first condition in \eqref{21041335}, whereas the other two easily  follow from \eqref{basic00}.
{Notice that \eqref{21041335} together with the fact that $u_n,u\in SBV^p(\Omega;\Su)$ implies that
\begin{equation}\label{nuova_15122023}
\int_{\sal_u\cap\sal_{u_n}}|[u]-[u_n]|\otimes\nnu_u\ud\Huno\to 0\quad\textrm{and}\quad\Huno(\sal_u\triangle\sal_{u_n})\to 0\qquad\textrm{as }n\to +\infty.
\end{equation}
}
Furthermore, using triangle inequality, \eqref{10112023_0} and \eqref{ragione_vito}, it is easy to check that
\begin{equation*}
|T^{D}_u-T^{D}_{u_n}|(\Om) \le 4\|\nabla u\|_{L^1(\{u\neq u_n\};\R^{2\times 2})}\|u-u_n\|_{L^\infty(\Om;\R^2)} +4\|u_n\|_{L^\infty(\Om;\R^2)}\|\nabla u-\nabla u_n\|_{L^1(\Omega;\R^{2\times 2})}\to 0
\end{equation*}
and that
\begin{equation*}
\begin{aligned}
|T^{S}_u-T^{S}_{u_n}|(\Om)\le&
\frac12 \int_{\sal_u\cap \sal_{u_n}} |u^+_n\wedge u^-_n-u^+\wedge u^-|\ud\Huno\\
&\,
+\frac 1 2\int_{\sal_u\setminus \sal_{u_n}}|u^+\wedge u^-|\ud\Huno+\frac 1 2\int_{\sal_{u_n}\setminus \sal_{u}}|u_n^+\wedge u_n^-|\ud\Huno
 \leq \Huno(\{[u]\neq [u_n]\}) \leq \frac3n ,
\end{aligned}
\end{equation*}
since $S_{u_n}\subset \Gamma_n \cup \partial\widetilde{\omega}_n \cup \partial^* \widehat{\omega}_n$ and $\{[u]\neq [u_n]\}=(S_u\triangle S_{u_n}) \cup (S_u\cap S_{u_n} \cap \{[u]\neq [u_n]\})$.
\end{proof}
We now turn to the proof of Theorem \ref{thm:densitySBVaux}.
\begin{proof}[Proof of Theorem \ref{thm:densitySBVaux}]
Let $\varrho$, $\alpha_1$, $\alpha_2$ be small positive constants to be determined later. We divide the proof into three steps. 

\medskip
\textit{Step 1: Covering the jump set.} Since the sets $$\widehat\sal_u^0:=\sal_u^\fff\qquad \text{and}\qquad\widehat\sal_u^z:=\{x \in \sal_u \colon [u](x)=z\}$$ for all $z\in\Z^*:=\Z\setminus\{0\}$ are countably $(\Huno, 1)$ rectifiable with finite $\Huno$ measure, by \cite[Theorem, 3.2.29]{Federer} for every $z\in \Z$ there exists a countable family $\{M_k^z\}_{k\in \N}$ such that
\begin{equation}\label{2304231952}
\Huno\Big(\tsal_u^z \sm \bigcup_{k=1}^\infty M_k^z\Big)=0
\end{equation}
and, by \cite[Theorem~2.76]{AFP} we may assume that for each $z \in \Z$ and $k\in \N$ the manifold $M_k^z$ is a graph of a $C^1$ and Lipschitz function with Lipschitz constant less than $\alpha_1$\,.
Let us fix $z\in \mathbb{Z}$ such that $\Huno(\widehat{S}_u^z)>0$ (in particular, for the application of the present theorem in this paper, this holds for $z=0$). Then, since for $\Huno$-a.e.\ $x \in \widehat{S}_u^z \cap M_k^z$, $x$ has $\Huno$-density 1 both for $\widehat{\sal}_{u}^z$ and $M_k^z$, for every $k\in \N$ and every such $x \in \widehat{S}_u^z \cap M_k^z$ there exists $\eta(\alpha_2, x) \in (0,\varrho)$ such that
\begin{equation}\label{2304232000}
\begin{aligned}
|\Huno(\ol Q_r(x)\cap \widehat{S}_u^z)-2r| \leq 2\alpha_2 r,\quad |\Huno(\ol Q_r(x)\cap M_k^z)-2r| \leq 2\alpha_2 r, \\
 |\Huno(\ol Q_r(x)\cap (\widehat{S}_u^z\cap M_k^z))-2r| \leq 2\alpha_2 r.
 \end{aligned}
\end{equation}
for every $r \leq \eta(\alpha_2, x)$, and moreover 
\begin{equation}\label{2304232002}
\Huno(\ol Q_r(x)\cap (\widehat{S}_u^z\triangle M_k^z)) \leq \alpha_2 \Huno (\ol Q_r(x) \cap \widehat{S}_u^z),
\end{equation}
for every $r \leq \eta(\alpha_2, x)$; here we recall that  $Q_r(x)$ denotes the (open) cube $Q^{\nu(x)}_r(x)$, centered at $x$, with sidelength $2r$  and with a side normal to $\nu(x)$, the approximate normal to $\sal_u$ (or $\widehat{\sal}_{u}^z$) at $x$. We notice that \eqref{2304232000} holds also for $S_u$ in place of $\widehat{S}_u^z$ or $\widehat{S}_u^z \cap M_k^z$, that is, for $\Huno$-a.e.\ $x \in S_u$, 
we may also assume
\begin{equation}\label{2304232000'}
|\Huno(\ol Q_r(x)\cap S_u)-2r| \leq 2\alpha_2 r
\end{equation}
for every $r \leq \eta(\alpha_2, x)$. Now we introduce $$M:= \sal_u \cap \bigcup_{z\in\Z, \, k\in \N} M_k^z.$$
We also denote by $\widetilde M\subset M$, the set of points $x$ satisfying \eqref{2304232000}, \eqref{2304232002}, and \eqref{2304232002}. From what observed, $\mathcal H^1(M\setminus \widetilde M)=0$; so,
 since the family $\{ \ol Q_r(x)\colon x\in M,\, r \leq \eta(\alpha_2, x)\}$ is a fine cover of $\widetilde M$,  Vitali-Besicovitch's Covering Theorem (see \cite[Theorem~1.10]{Fal86} for its version for cubes) ensures the existence of a disjoint subfamily $\{ \ol Q_{r(\alpha_2,x)}(x)\colon x\in M'\}$, for a countable set $M'=\{x_j\}_{j\in \N}\subset \widetilde M\subset M$ and $r(\alpha_2,x)\leq \eta(\alpha_2,x)$, such that
\begin{equation*}
\Huno\Big(\sal_u \sm \bigcup_{j\in \N} \ol Q_j\Big)=0,
\end{equation*}
where we have noted $Q_j:=Q_{r_j}(x_j)$ and $r_j:=r(\alpha_2,x_j)$ for every $j\in\N$.
Then there exists $J=J(\alpha_2)\in\N$ such that
\begin{equation}\label{2304232338}
\Huno\Big(\sal_u \sm \bigcup_{j=1}^J \ol Q_j \Big)< \alpha_2.
\end{equation}
For every $j \in \{1,\dots,J\}$, let $k_j\in\N$, $z_j\in\Z$ be the indeces such that, $x_j\in  \tsal_u^{z_j}  \cap M_{k_j}^{z_j}$, and \eqref{2304232000}, \eqref{2304232002}, \eqref{2304232000'} hold (for all $j$, such indeces are unique).
Then we  set $$\Gamma_j:= \ol Q_j \cap M_{k_j}^{z_j}\;.$$ We now see that for every $j\in \{1,\dots,J\}$, there hold
\begin{align}\label{aprop}\tag{{a}}
&\text{$\Gamma_j$ is the graph of a $C^1$ and Lipschitz function with Lipschitz constant less than $\alpha_1$;}\\ \label{bprop}{\tag{b}}
&\text{$|\Huno(\ol Q_r(x_j) \cap \Gamma_j)-2r|\leq 2 \alpha_2 r$ for all $0<r\leq r_j$;}\\ \label{cprop}{\tag{c}}
&\text{$\Huno(\ol Q_j\cap (\tsal^{z_j}_u \triangle \Gamma_j)) + \Huno(\ol Q_j \cap (\sal_u \sm {\tsal}^{z_j}_{u})) \leq  8\alpha_2 r_j$;}\\ \label{dprop}{\tag{d}}
&\text{$\Huno\Big(\sal_u  \triangle \bigcup_{j=1}^J \Gamma_j\Big) \leq \alpha_2(1+5 \Huno(\sal_u))$;}\\ \label{eprop}{\tag{e}}
&\text{$\Ld\Big(\bigcup_{j=1}^J \ol Q_j\Big) \leq 3 \varrho  \, \Huno(\sal_u)$.}
\end{align}
Property \eqref{aprop} follows by definition, since $M^z_k$ are graphs of Lipschitz maps with constant less than $\alpha_1$. 
Property \eqref{bprop} follows immediately from  \eqref{2304232000}.
As for the proof of \eqref{cprop}, by \eqref{2304232002} and \eqref{2304232000}, choosing 
\begin{align}\label{alpha2_1}
\alpha_2\le 1\;,
\end{align} we have that
\begin{equation}\label{primalucia}
\Huno(\ol Q_j\cap (\tsal^{z_j}_u \triangle \Gamma_j))\le \alpha_2\Huno(\ol Q_j\cap \tsal^{z_j}_u)\le 2\alpha_2(1+\alpha_2) r_j\le 4\alpha_2 r_j\,;
\end{equation}
moreover, by \eqref{2304232000'} and \eqref{2304232000} we have that
\begin{equation*}
\begin{aligned}
\Huno(\ol Q_j\cap (\sal_u\setminus \tsal^{z_j}_u))=\,&\Huno(\ol Q_j\cap S_u)-\Huno(\ol Q_j\cap \tsal_u^{z_j})\\
\le&\, 2r_{j}(1+\alpha_2)-2r_j(1-\alpha_2)=4\alpha_2r_j\,,
\end{aligned}
\end{equation*}
which, combined with \eqref{primalucia}, yields property \eqref{cprop}.
Property \eqref{dprop} follows from the decomposition
\[
\sal_u \sm \bigcup_{j=1}^J \Gamma_j= \left( \sal_u \sm \bigcup_{j=1}^J \ol Q_j \right) \cup  \bigcup_{j=1}^{J} \big(\ol Q_j \cap (\sal_u \sm\Gamma_j) \big) ,
\]
combined with \eqref{2304232338} and the estimate
\begin{equation*}
\begin{aligned}
\Huno\big(\ol Q_j \cap (\sal_u \triangle\Gamma_j) \big) 
=&\,
\Huno\big(\ol Q_j \cap(\tsal_u^{z_j}\setminus\Gamma_j)\big)+\Huno\big(\ol Q_j \cap((\sal_u\setminus\tsal_u^{z_j})\setminus\Gamma_j)\big)+\Huno\big(\ol Q_j \cap(\Gamma_j\setminus\sal_u) \big)\\
\leq&\,
\Huno\big(\ol Q_j \cap (\tsal_u^{z_j} \triangle\Gamma_j) \big)+\Huno\big(\ol Q_j\cap(\sal_u\setminus\tsal_u^{z_j})\big)
\\
\le&\, 8\alpha_2 r_j \leq 4 \alpha_2 \left( \Huno(\ol Q_j \cap \sal_u) + 2 \alpha_2 r_j) \right)\\
 \leq&\, 5 \alpha_2 \Huno(\ol Q_j \cap \sal_u)
\end{aligned}
\end{equation*}
recalling that the cubes $\ol Q_j$ are pairwise disjoint.
Here the first inequality follows from the fact that $\tsal_u^{z_j}\subset\sal_u$\,, the second one
 from \eqref{cprop}, the third one from \eqref{2304232000'}, the fourth again from  \eqref{2304232000'} choosing \begin{align}\label{alpha2_2}
\alpha_2<\frac 1 5\;.
 \end{align} 
Eventually, recalling that $r_j \leq \varrho$ for every $j$, 
by \eqref{2304232000'}, since $\alpha_2<\frac15<\frac 1 3$, and using again that the cubes $Q_j$ are pairwise disjoint,
we have 
\[
\Ld\big(\bigcup_{j=1}^J \ol Q_j\big) = \sum_{j=1}^J 4 r_j^2 \leq 2 \varrho \sum_{j=1}^J \left( \Huno(\ol Q_j \cap \sal_u) + 2 \alpha_2 r_j \right) \leq 3\varrho\sum_{j=1}^J \Huno(\ol Q_j \cap \sal_u) \leq 3 \varrho \Huno(\sal_u),
\]
from which \eqref{eprop} follows.
Moreover, using \eqref{2304232338} with \eqref{cprop}, and arguing as done to prove \eqref{dprop}, we obtain that
\begin{equation*}
 \widehat{\Gamma}^\iii	:= \bigcup_{j \colon z_j\neq 0} \Gamma_j, \qquad\quad \widehat{\Gamma}^\fff:= \bigcup_{j \colon z_j=0} \Gamma_j, \quad\qquad \widehat{\Gamma}:=\widehat\Gamma^\iii \cup \widehat\Gamma^\fff= \bigcup_{j=1}^J \Gamma_j
\end{equation*}
are finite unions of disjoint $C^1$ curves and
\begin{equation}\label{2404230943}
\Huno(\sal^\iii_u \triangle \widehat{\Gamma}^\iii) + \Huno(\sal^\fff_u \triangle \widehat{\Gamma}^\fff)\leq \alpha_2(1+  5  \Huno(\sal_u)).
\end{equation}

\medskip
\textit{Step 2: Approximation in the cubes $Q_j$.} 
We perform two different approximations depending on whether the cube $Q_j=Q_{r_j}(x_j)$ is such that $z_j=0$ or not. {To shorten the notation, we set $\nu_j:=\nu(x_j)$, where $\nu(x_j)$ is the approximate normal to $\sal_u$ at $x_j$.}

\medskip
\textit{Step 2.1: The case $z_j\neq0$}. This implies that $[u](x_j)=z_j\in\Z^*$\,. 
In this case we first show that there exists a ``big'' set of segments (parallel to $\nu_j^\perp$) in the cube $\ol Q_j$ that do not intersect the jump set $\sal_u$ of $u$ and such that (small) stripes centered at each of this segment contain a ``few'' portion of $\sal_u\setminus\Gamma_j$\,.
To this end, denoting by $x_j+\nu_j^{\perp}$ the straight line orthogonal to $\nu_j$ and passing through $x_j$\,, we define the (signed) distance from such a line as $\mathrm{dist}(x,x_j+\nu_j^{\perp}):=(x-x_j)\cdot \nu_j$\,.
Moreover, for every $\gamma\in(-r_j/2,r_j/2)$ we define 
$$
T_j^\gamma:= \ol Q_j \cap \{\dist(\cdot, x_j + \nu_j^\perp)=\gamma\}
$$ 
and, for every $k\in\N$\,, we set
\begin{equation*}
C^{\gamma,k}_j:=\ol Q_j \cap \{\dist(\cdot, x_j + \nu_j^\perp) \in [\gamma-2^{-  k }r_j, \gamma+2^{- k }r_j]\}. 
\end{equation*}
Let 
\begin{equation}\label{sceltadieta}
\tilde{\eta}:=\frac{1}{160} \eta,
\end{equation}
where $\eta$ is the constant from Proposition~\ref{prop:analoga32CFI} below. 
We set $S_j:=\ol Q_j\cap (\sal_u\setminus\Gamma_j)$\,.
We claim that there exists a set $I^{\tilde\eta}_j \subset (-r_j/2, r_j/2)$ with 
\begin{equation}\label{2405231721}
\mathcal{L}^1(I^{\tilde\eta}_j) \leq \frac{ 160\, \alpha_2  }{\tilde{\eta}}r_j
\end{equation}
such that, 
for every $\gamma \in (-r_j/2, r_j/2)\setminus I^{\tilde\eta}_j$, it holds
\begin{equation}\label{2405231725}
\Huno(S_j \cap C^{\gamma,k}_j)< \tilde{\eta} 2^{-(k+1)} r_j \quad \text{for all }k\in \N
\end{equation}
 and
\begin{equation}\label{1609231124}
\Huno(T^{\gamma}_j\cap \sal_u)=0.
\end{equation} 
Indeed, for $\delta_j^k:= 2^{-k}r_j$, we argue as in the proof of \cite[Theorem~2.1]{CFI18}, considering the family
\begin{equation}\label{2405231842}
\mathscr{I}^{\tilde\eta}_j:=\left\{ [\gamma-\delta^k_j \EEE , \gamma+ \delta^k_j \EEE] \quad\colon\quad \Huno(S_j\cap C^{\gamma,k}_j)\geq \frac{\tilde{\eta}}{2} \delta^k_j\,,\quad k\in\N\,,\quad\gamma\in  (-r_j/2, r_j/2)\right\}
\end{equation}
and $I^{\tilde\eta}_j:=\bigcup_{I\in \mathscr{I}^{\tilde\eta}_j}I$\,. By Vitali's covering theorem, there exists a countable set $\{(\gamma^l, k^l)\}_{l\in \N}$ such that the intervals $[\gamma^l-\delta^{k^l}_j\EEE, \gamma^l+\delta^{k^l}_j\EEE]$ in $\mathscr{I}^{\tilde\eta}_j$ are pairwise disjoint and 
$$
I^{\tilde \eta}_j\subset\bigcup_{l\in\N}[\gamma^l-5\delta^{k^l}_j\EEE, \gamma^l+5\delta^{k^l}_j\EEE]\,.
$$
By property \eqref{cprop} of $\Gamma_j$ we have
\begin{equation*}
8 \alpha_2 r_j \geq \Huno (S_j\cap Q_j)\geq \sum_{l\in \N} \Huno(S_j \cap C_j^{\gamma^l, k^l}) \geq \sum_{l\in \N} \frac{\tilde{\eta}}{2} \delta_j^{k^l} \geq \frac{\tilde{\eta}}{ 20 } \mathcal{L}^1({I}_j^{\tilde\eta}).
\end{equation*}
Then \eqref{2405231721} follows.
By definition of $I^{\tilde\eta}_j$,  every 
$\gamma \in (-r_j/2,r_j/2) \sm I^{\tilde\eta}_j$  does not belong to any interval of the family \eqref{2405231842} and then  satisfies \eqref{2405231725}. 
 Since $\Huno(T_j^{\gamma} \cap \sal_u)=0$ for every $\gamma \in (-r_j/2, r_j/2)$ except at most countable many, we may enforce also \eqref{1609231124}. 
Then the claim is confirmed. 

Let us choose $\gamma_j \in (-r_j/2, r_j/2)\setminus I^{\tilde{\eta}}_j$ satisfying \eqref{2405231725}, \eqref{1609231124} with
\begin{equation}\label{2701240937}
\gamma_j \in \left(0, \frac{161 \alpha_2}{\tilde{\eta}}r_j\right).
\end{equation}
We consider the function
\begin{equation}\label{2405231920}
\widehat{u}_j:= u+ z_j \chi_{H_j} \quad \text{in }\ol Q_j,
\end{equation}
where
$H_j \subset Q_j$ is the closed region delimited by $\Gamma_j$, $T_j^{\gamma_j}$, and the two segments $\Sigma_j^1$, $\Sigma_j^2 \subset \partial Q_j$ joining the two couples of intersection points of $\Gamma_j$ and $T_j^{\gamma_j}$ with the two boundary segments 
\[
B_j^\pm:=\{x_j\pm r_j \tau_j + t \nu_j \colon |t|<r_j \} \subset \partial Q_j,
\] 
being the `tangent vector' $\tau_j$ the unit vector forming an angle of $\pi/2$ with $\nu_j$. \EEE
By \MMM property \eqref{cprop} 
we deduce that
\begin{equation}\label{2405231232}
\begin{split}
\Huno(Q_j & \cap (\sal_{\widehat{u}_j} \sm T_j^{\gamma_j})) 
 \MMM \leq \EEE \Huno(Q_j \cap (\sal_{u} \sm \MMM \Gamma_j \EEE)) + \Huno(\Gamma_j \sm \tsal_u^{z_j})  \leq 8\alpha_2  r_j;
\end{split}
\end{equation}
\MMM by \eqref{1609231124} it holds that \EEE
\begin{equation}\label{0106231130}
 \Huno(T_j^{\gamma_j} \cap \{[\widehat{u}_j] \neq \MMM z_j \EEE \}) =0. 
\end{equation}
Further, for every $s \in (0, r_j)$, denoting \MMM $Q_{j,s}:=x_j + Q_s$ and $\Sigma_{j,s}^1$, $\Sigma_{j,s}^2 \subset \partial Q_{j,s}$ the two segments joining the two couples of intersection points  of $\Gamma_i$ and $T_j^{\gamma_j}$ with the boundary segments
\[
B_{j,s}^\pm:=\{x_j\pm s \tau_j + t \nu_j \colon |t|<r_j \} \subset \partial Q_{j,s},
\]  
in view of \eqref{aprop}  and \eqref{2701240937} \EEE
\begin{equation}\label{0106230928}
\Huno(\Sigma_{j,s}^1 \MMM \cup \EEE \Sigma_{j,s}^2)\leq 2\left( \alpha_1 s \MMM + \frac{161 \alpha_2}{\tilde{\eta}}r_j \right). \EEE 
\end{equation}
Arguing as done before to ensure \eqref{2405231725}, \eqref{1609231124} for $\gamma$ outside a small set, it is possible to find $\widehat{r}_j \in ((1- \sqrt{\alpha_2}) r_j, r_j)$ (for $\alpha_2$ small enough) such that, denoting
\begin{align}\label{27012400949}
&Q^+_{j,\widehat{r}_j}:=\{ x \in Q_j\colon (x-x_j)\cdot \nu_j \in (\gamma_j, \gamma_j+ \widehat{r}_j) ,\, (x-x_j) \cdot \tau_j \in (-\widehat{r}_j, \widehat{r}_j)\},\nonumber\\
&Q^-_{j,\widehat{r}_j}:=\{ x \in Q_j\colon (x-x_j)\cdot \nu_j \in (\gamma_j- \widehat{r}_j, \gamma_j) ,\, (x-x_j) \cdot \tau_j \in (-\widehat{r}_j, \widehat{r}_j)\},
\end{align}
it holds that 
\begin{equation}\label{2405231948}
\begin{split}
&\Huno(\sal_{\widehat{u}_j} \cap \partial Q^\pm_{j,\widehat{r}_j})=0, \\ &\Huno\Big(\sal_{\widehat{u}_j} \cap Q^\pm_{j, \widehat{r}_j} \cap \big(\partial Q^\pm_{j, \widehat{r}_j}+ B_{2^{-k}r_j}(0)\big)\Big) < \tilde{\eta}2^{-(k+1)}r_j \text{ for every }k \in \N. 
\end{split}
\end{equation}
In fact, the same argument as above shows that there exist sets $I_{\mathrm{hor}}^\pm$, $I_{\mathrm{ver}}^\pm \subset (-r_j, r_j)$ with $\mathcal{L}^1(I_{\mathrm{hor}}^\pm)\leq \frac{320 \alpha_2}{\tilde{\eta}}r_j$, $\mathcal{L}^1(I_{\mathrm{ver}}^\pm)\leq \frac{320 \alpha_2}{\tilde{\eta}}r_j$ such that, for 
\begin{equation*}
\begin{split}
& C_{\mathrm{hor},-}^{\widetilde{\gamma},k}:=\{x\in Q_j \colon (x-x_j)\cdot \nu_j \in [\widetilde{\gamma}, \widetilde{\gamma}+ 2^{-k}r_j]\}, \quad C_{\mathrm{hor},+}^{\widetilde{\gamma},k}:=\{x\in Q_j \colon (x-x_j)\cdot \nu_j \in [\widetilde{\gamma}- 2^{-k}r_j, \widetilde{\gamma}]\},\\
& C_{\mathrm{ver},-}^{\widetilde{\gamma},k}:=\{x\in Q_j \colon (x-x_j)\cdot \tau_j \in [\widetilde{\gamma}, \widetilde{\gamma}+ 2^{-k}r_j]\}, \quad C_{\mathrm{ver},+}^{\widetilde{\gamma},k}:=\{x\in Q_j \colon (x-x_j)\cdot \tau_j \in [\widetilde{\gamma}- 2^{-k}r_j,\widetilde{\gamma}] \},
\end{split}
\end{equation*}
it holds that for every $\widetilde{\gamma}\notin I_{\mathrm{hor}}^\pm$, $I_{\mathrm{ver}}^\pm$ \EEE
\begin{equation*}
\Huno(S_j \cap C_{\mathrm{hor},\pm}^{\widetilde{\gamma},k})\leq \tilde{\eta}2^{-(k+3)}r_j, \quad \Huno(S_j \cap C_{\mathrm{ver},\pm}^{\widetilde{\gamma},k})\leq \tilde{\eta}2^{-(k+3)}r_j \quad \text{ for every }k \in \N.
\end{equation*}
Therefore, for $\alpha_2$ small enough 
it is possible to find 
\[
\widehat{r}_j \in ((1- \sqrt{\alpha_2}) r_j, r_j) \quad \text{ such that }\gamma_j\pm \widehat{r}_j \notin I_{\mathrm{hor}}^\pm, \, \pm \widehat{r}_j \notin I_{\mathrm{ver}}^\pm,\] 
so the second condition in \eqref{2405231948} is satisfied. Then \eqref{2405231948} follows, since the first condition holds true for every $\widehat{r}_j$ except at most countably many.

Let\footnote{The intersection can be assumed to be nonempty, up to choosing $\alpha_2$ smaller if necessary.} 
\begin{equation}\label{1609231727}
\tdelta_j \in \Big(8\frac{\alpha_2}{\tilde\eta}, 16\frac{\alpha_2}{\tilde\eta}\Big) \cap \frac{\widehat{r}_j}{ r_j \N},
\end{equation}
so that the rectangles $Q^\pm_{i,\widehat{r}_j}$
are partitioned into cubes of sidelength $ \tdelta_j r_j$. Moreover, let $\widehat{k}_j \in \N$ be such that 
\begin{equation}\label{2701241920}
\tdelta_j \in  [2^{-(\widehat{k}_j+1)}, 2^{-\widehat{k}_j}).
\end{equation} \EEE
By property \eqref{aprop}, we have that
\begin{equation}\label{0106231042}
\Huno((Q_j \sm Q_{j, \widehat{r}_j}) \cap \Gamma_j)\leq 2 \alpha_1  \sqrt{\alpha_2} r_j.
\end{equation}
 We now subdivide $Q^\pm_{j,\widehat{r}_j}$ into cubes whose sidelength vanishes in a dyadic way towards the boundaries such that in any of them there is a small amount of jump of $\widehat{u}_j$ compared to the sidelength, in the sense of Proposition~\ref{prop:analoga32CFI}.

Let us assume, for simplicity of notation, that $x_j=0$ and $\nu(x_j)=e_2$. We introduce two sets $\Qcal_i^\pm$ of dyadic squares of sidelength $\widetilde{\delta}_k:= 2^{-k} \tdelta_j \EEE\, r_j$, $k \in \N$, which refine towards $\partial Q^\pm_{j,\widehat{r}_j}$, as follows: let $\Qcal^\pm_{j,0}$ be the family of squares $q \in \{ z + (0, \tdelta_j \EEE r_j]^2 \colon z \in \tdelta_j \EEE r_j \Z^2\}$, $q \subset Q^\pm_{j,\widehat{r}_j}$ such that $\dist(q, \partial Q^\pm_{j,\widehat{r}_j})> \tdelta_j \EEE r_j$; recursively, for $k\geq 1$, let $\Qcal^\pm_{j,k}$ be the family of squares $q \in \{ z+(0, \widetilde{\delta}_k]^2 \colon x \in \widetilde{\delta}_k \Z^2\}$, $q \subset Q^\pm_{j,\widehat{r}_j}$ such that $\dist(q, \partial Q^\pm_{j,\widehat{r}_j})> \widetilde{\delta}_k$ and $q$ does not intersect any cube in $\Qcal_{j,l}^\pm$, for $l<k$; we define
\[
\Qcal^\pm_j:=\bigcup_{k=0}^\infty \Qcal^\pm_{j,k}.
\]
For each $q \in \Qcal_j^\pm$ let $q'$ and $q''$ denote squares concentric with $q$ with sidelength 10\% and 20\% longer, respectively, so that 
$l(q')=\frac{11}{12} l(q'')$ and $l(q'')=\frac 6 5l(q)$; here and below, $l(\tilde{q})$ denotes the sidelength of a cube $\tilde{q}$.
By \eqref{sceltadieta}, \eqref{2405231232}, and \eqref{1609231727}, for any $q\in \Qcal_{j,0}^{\pm}$, we get that
\begin{equation}\label{quelli0}
\Huno(q''\cap \sal_{\widehat u_j})\le 8\alpha_2r_j\le \tilde\eta \tdelta_j \EEE r_j<\eta\frac{1}{40}l(q)=\eta\frac{1}{10}\frac{5}{6}\frac{l(q'')}{4}=\eta\Big(1-\frac{11}{12}\Big)\frac{l(q'')}{4}\,,
\end{equation}
so that all the squares $q''$ ``coming from'' squares $q\in \Qcal_{j,0}^{\pm}$ satisfy the hypotheses of Proposition~\ref{prop:analoga32CFI} for $s=\frac{11}{12}$.

Moreover, let $k\in\N$\,.
By \eqref{sceltadieta}, \eqref{2405231948}, \eqref{1609231727},  and \eqref{2701241920}, for any $q\in \Qcal_{j,k}^{\pm}$ we have (since $q''\subset \partial Q^\pm_{j,\widehat{r}_j}+ B_{2^{-(k+\widehat{k}_j-2)}r_j}(0)$) \EEE
\begin{equation}\label{quellik}
\Huno(q''\cap \sal_{\widehat u_j})\le\tilde\eta2^{-(k +\widehat{k}_j \EEE-1)} r_j< \frac{\eta}{40}  \widetilde{\delta}_k \EEE = \frac{\eta}{10}\frac{5}{6}\frac{l(q'')}{4}=\eta\Big(1-\frac{11}{12}\Big)\frac{l(q'')}{4}\,,
\end{equation}
so that all the squares $q''$ ``coming from'' squares $q\in \Qcal_{j,k}^{\pm}$ satisfy the hypotheses of Proposition~\ref{prop:analoga32CFI} for $s=\frac{11}{12}$\,.
By \eqref{quelli0} and \eqref{quellik} we thus deduce that all the squares $q''$ ``coming from'' squares $q\in \Qcal_{j}^{\pm}$ satisfy the hypotheses of Proposition~\ref{prop:analoga32CFI} for $s=\frac{11}{12}$.

Therefore, by Proposition~\ref{prop:analoga32CFI} applied to $\widehat{u}_j \in SBV^p(q'')$, for each $q \in \Qcal^\pm_j$ there is a set of finite perimeter $\omega_q \subset q''$, $\omega_q=\cup_{\mathcal{F}_q}B$ such that 
\begin{equation}\label{2405232347}
\begin{split}
w_q \in W^{1,p}(q'), & \quad  w_q=\widehat{u}_j \text{ in }q''\sm \omega_q\\
\Huno(\partial^* \omega_q) &\leq c/\eta \Huno(\sal_{\widehat{u}_j} \cap q''),\\
\int_{\omega_q} |\nabla w_q|^p \ud x &\leq c \int_{\omega_q} |\nabla \widehat{u}_j|^p \ud x.
\end{split}
\end{equation}
We define
\begin{equation*}
\omega_j^\pm:= \bigcup_{q \in  \Qcal_j^\pm} \omega_q.
\end{equation*}
Since the cubes $q''$ overlap at most 8 times, by the second and third property in \eqref{2405232347} we deduce that
\begin{equation}\label{2905232235}
\begin{split}
\Huno(\partial^* \omega_j^\pm) &\leq 8c/\eta \, \Huno(\sal_{\widehat{u}_j} \cap Q_{j, \widehat{r}_j}^\pm),\\
 \int_{\omega_j^\pm} |\nabla w_q|^p \ud x & \leq 8c \int_{\omega_j^\pm} |\nabla \widehat{u}_j|^p \ud x \EEE
\end{split}
\end{equation}
Following \cite[proof of Theorem~4.1]{CCS22}, we construct  regularized functions $v_j^\pm$ on $Q_{j, \widehat{r}_j}^\pm$ which are convex combinations of the functions $w_q$. We notice that in our setting all the cubes are ``good'', according to the definition in \cite{CCS22}, that is the jump inside has small $\Huno$-measure compared to the sidelength.

We set
\begin{equation}\label{2905232134}
v_j^\pm:= \sum_{q \in \Qcal_j^\pm} w_q \varphi_q,
\end{equation}
where
\begin{equation*}
\begin{split}
\varphi_q:= & \frac{\psi_q}{\sum_{\widehat{q} \in \Qcal_j^\pm} \psi_{\widehat{q}}},  \quad \psi_q(x):= \psi\Big( \frac{x-c_q}{l(q)} \Big) \text{ for }q=c_q+\Big(-\frac{l(q)}{2}, \frac{l(q)}{2}\Big)^2, \\
&  \psi \in C_c^\infty\Big( (-11/20, 11/20)^2; [0,1] \Big),\, \psi=1 \text{ on } [-1/2,1/2]^2. 
\end{split}
\end{equation*}
By construction, $\psi_q\in \Cc^{\infty}(q';[0,1])$ and $\psi_q\equiv 1$ in $q$, for any $q\in \Qcal_{j}^{\pm}$.
Since, by \eqref{2405232347},  $w_q \in W^{1,p}(q')$ for every $q \in \Qcal_j^\pm$, we deduce that
\begin{equation}\label{0106231155}
v_j^\pm \in W^{1,p}\Big(\bigcup_{q\in\Qcal_j^\pm}q'\Big).
\end{equation} 
Eventually, we define
\begin{equation}\label{2905232228}
v_j:= v_j^+ \chi_{Q^+_{j,\widehat{r}_j}}+v_j^- \chi_{Q^-_{j,\widehat{r}_j}}, \quad \omega_j:= \omega^+_j \cup \omega^-_j.
\end{equation}
By \eqref{2405231232}, \eqref{2405232347}, \eqref{2905232235}, \eqref{2905232134}, \eqref{2905232228} it follows that 
\begin{equation}\label{2905232231}
\begin{split}
\int_{\omega_j} & |\nabla v_j|^p \ud x  \leq    8c \EEE  \int_{\omega_j} |\nabla u|^p \ud x, \quad v_j=\widehat{u}_j \text{ in }Q_{j, \widehat{r}_j} \sm \omega_j, \\
& \Huno(\partial^* \omega_j) \leq 8\, c/\eta \Huno\big(Q_{j, \widehat{r}_j}\cap (\sal_{\widehat{u}_j} \sm \tsal_j^{\gamma_j})\big) \leq  64 \, c/\eta \, \, \alpha_2  r_j.
\end{split}
\end{equation}
We observe that the first estimate above is obtained arguing as in \cite[Step~3.3 in Theorem~5.1]{CCS22} with the full gradient in place of the symmetrized gradient. 

Furthermore, by construction we have that
\begin{equation}\label{2905232239}
v_j=\widehat{u}_j \text{ on } \partial Q_{j, \widehat{r}_j}, \quad [v_j]=[\widehat{u}_j] \text{ on }T_j^{\gamma_j}.
\end{equation}
We observe that the latter property above follows from the fact that we employed a Whitney-type approximation towards $T_j^{\gamma_j}$.
In view of \eqref{0106230928},
\begin{equation}\label{0106230939}
\Huno(\partial Q_{j, \widehat{r}_j} \cap \{v_j \neq u\})= \Huno(\Sigma_{j,\widehat{r}_j}^1 \cap \Sigma_{j,\widehat r_j}^2)\leq 2 \alpha_1 \widehat{r}_j +  \frac{322 \alpha_2}{\tilde{\eta}}r_j\EEE,
\end{equation}
and, by property \eqref{bprop} and the definition of $T_j^{\gamma_j}$,
\begin{equation}\label{0106230941}
|\Huno( Q_{j, \widehat{r}_j} \cap \Gamma_j) - \Huno(Q_{j, \widehat{r}_j} \cap T_j^{\gamma_j})|\leq 2 \alpha_2 \widehat{r}_j.
\end{equation}

\medskip
\noindent \textit{Step 2.2: the case $z_j=0$}. In this case, $x_j \in \sal^\fff_u$, and as done before, we find radii $\widehat{r}_j \in ((1-\sqrt{\alpha_2}) r_j, r_j)$  such that, 
denoting $Q_{j,s}:=x_j + Q_s$,   it holds that 
\begin{equation}\label{2405231948'}
\begin{split}
& \Huno(\sal_{\widehat{u}_j} \cap \partial Q_{j,\widehat{r}_j})=0, \\
& \Huno\Big(\sal_{\widehat{u}_j} \cap Q_{j, \widehat{r}_j} \cap \big(Q_{j, \widehat{r}_j}+ B_{2^{-k}r_j}(0)\big)\Big) < \tilde{\eta}2^{-(k+1)}r_j \text{ for every }k \in \N. 
\end{split}
\end{equation}
By this choice, we can slightly amend the construction in \cite[Theorem~4.1]{CCS22}, to find in both
\[
Q^\pm_{j,\widehat{r}_j} \quad \text{connected components of } Q_{j,\widehat{r}_j}\sm \Gamma_j
\]
two sets of finite perimeter $\omega_j^\pm$ and functions $v_j^\pm \in W^{1,p}(Q^\pm_{j,\widehat{r}_j})$ such that, for suitable $c_j^\pm=c_j^\pm(p)>0$,
\begin{equation}\label{3005231017}
\begin{split}
& v_j^\pm=  u \text{ in }Q^\pm_{j,\widehat{r}_j}  \sm \omega_j^\pm, \quad \int_{\omega_j^\pm} |\nabla v_j^\pm|^p \ud x \leq c_j^\pm \int_{\omega_j^\pm} |\nabla u|^p \ud x, \\
& \Huno(\partial^* \omega_j^\pm) \leq c_j^\pm \Huno(\sal_u \cap Q^\pm_{j,\widehat{r}_j}), \quad \Huno(\partial Q_{j,\widehat{r}_j} \cap (\omega_j^+\cup \omega_j^-))=0.
\end{split}
\end{equation}
We notice that the last condition is new with respect to \cite[Theorem~4.1]{CCS22}: it comes from the Whitney-type construction as in the previous substep, in turn allowed by the choice of $\widehat{r}_j$ for which \eqref{2405231948} holds, which is possible in 2d.
Moreover, as in \cite[proof of Theorem~5.1, Step 2.2]{CCS22}, one proves that the constant 
\begin{equation}\label{2302241844}
\tilde{c}:=\max\{c_j^\pm\colon j \text{ s.t.\ }x_j \in \sal_u^\fff\}
\end{equation} is bounded uniformly with respect to $\alpha_2$ 
(in particular, even if the sidelenghts of cubes decrease and the number of cubes increases; notice that increasing the number of cubes one may assume that the Lipschitz constant corresponding to $\Gamma_j$ decreases).
As above, we set 
\begin{equation*}\label{3005231930}
v_j:= v_j^+ \chi_{Q^+_{j,\widehat{r}_j}}+v_j^- \chi_{Q^-_{j,\widehat{r}_j}}, \quad \omega_j:= \omega^+_j \cup \omega^-_j.
\end{equation*}
\\
\medskip
\textit{Step 3: conclusion}.
Following the lines of \cite[proof of Theorem~5.1, Step 3]{CCS22}, let us consider $\delta \in (0, 0.4\, \sqrt{2} \alpha_2 \min_{j=1, \dots, J}r_j)$ and the families:
\begin{equation*}
\begin{split}
&\Qscr_1:=\bigg\{q_{z,\delta}=\delta z +[0,\delta]^2 \colon z \in \mathbb{Z}^2,\, q_{z,\delta} \cap \Big(\R^2 \sm \bigcup_{j=1}^J Q_{j, \widehat{r}_j}\Big) \neq \emptyset\bigg\},\\
& \Qscr_2:=\bigg\{q_{z,\delta}=\delta z +[0,\delta]^2 \colon z \in \mathbb{Z}^2,\, q_{z,\delta} \notin\Qscr_1 \text{ and intersects some cubes in }\Qscr_1\bigg\},\\
& \Qscr:=\Qscr_1 \cup \Qscr_2.
\end{split}
\end{equation*}
For each $q \in \Qscr$, let $q'$ and $q''$ be the (closed) cubes concentric with $q$ and having side length  $l(q')=\frac98 \delta$ and $l(q'')=\frac{10}{8}\delta=\frac{10}{9}l(q')$, respectively.
Let
\begin{equation}\label{3005231926}
\widehat{v}(x):=
\begin{dcases}
v_j(x), &\quad x \in Q_{j, \widehat{r}_j},\\
u(x), &\quad x \in \Omega \sm \bigcup_{j=1, \dots, J} Q_{i, \widehat{r}_j},
\end{dcases}
\end{equation}
and, recalling the definition of $c$, $\eta$ from Proposition~\ref{prop:analoga32CFI}, set
\begin{equation*}
\Qscr_g:=\Big\{ q \in \Qscr\, \colon\, \Huno(\sal_{\widehat{v}} \cap q'')\leq {\frac{1}{32}} \eta \delta  \Big\}{=\Big\{ q \in \Qscr\, \colon\, \Huno(\sal_{\widehat{v}} \cap q'')\leq \eta\Big(1-\frac{9}{10}\Big)\frac{l(q'')}{4}  \Big\}}, \quad \Qscr_b:=\Qscr \sm \Qscr_g.
\end{equation*}
For every $q \in \Qscr_g$, by Proposition~\ref{prop:analoga32CFI} applied to $\widehat{v}\in SBV^p(q'')$ (in correspondence to $s=0.9$) there exist $w_q  \in SBV^p(q'')$ and $\omega_q \subset q''$, $\omega_q=\cup_{\mathcal{F}_q}B$ such that 
\begin{equation}\label{2405232347'}
\begin{split}
w_q \in W^{1,p}(q'), & \quad  w_q=\widehat{v} \text{ in }q''\sm \omega_q\\
\Huno(\partial^* \omega_q) &\leq c/\eta \Huno(\sal_{\widehat{v}} \cap q''),\\
\int_{\omega_q} |\nabla w_q|^p \ud x &\leq c \int_{\omega_q} |\nabla {\widehat{v}}|^p \ud x.
\end{split}
\end{equation}
 Up to reducing the threshold in the definition of $\Qscr_g$, it holds that if $q' \cap \Gamma_j \neq \emptyset$ for some $j=1, \dots, J$, then $q \notin \Qscr_g$, so that if $q \in \Qscr_g$ is such that $q' \subset Q_{j, \widehat{r}_j}$ it holds that $q' \subset Q^\pm_{j, \widehat{r}_j}$ and then $w_q=\widehat{v}$ (and $\omega_q \cap q'=\emptyset$), since $\widehat{v}=v_j \in W^{1,p}(Q^\pm_{j, \widehat{r}_j})$.

We set, recalling \eqref{2905232228} (and the analogue for $j$ s.t.\ $x_j \in \sal_u^\fff$)
\begin{equation*}
G:=\bigcup_{q \in \Qscr_g} q, \quad \widetilde{\omega}:=\bigcup_{q \in \Qscr_b} q,
\end{equation*}
\begin{equation*}
\Gamma^\iii:= \bigcup_{j \colon x_j \notin \sal_u^\fff} (Q_{j, \widehat{r}_j} \cap T_j^{\gamma_j}), \quad \Gamma^\fff:=\bigcup_{j \colon x_j \in \sal_u^\fff} (Q_{j, \widehat{r}_j} \cap \Gamma_j), \qquad \widehat{\omega}:= \bigcup_{q \in \Qscr_g} \omega_q \cup \bigcup_{j=1, \dots, J} \omega_j,
\end{equation*}
and
\begin{equation}\label{3005232001}
v:=
\begin{dcases}
 \sum_{q \in \Qscr_g} w_q \varphi_q, &\quad\text{in }G,\\
 0, &\quad\text{in }\widetilde{\omega},\\
 \widehat{v}, &\quad\text{in }\Omega\sm (G\cup \widetilde{\omega}),
 \end{dcases}
\end{equation}
where
\begin{equation*}
\begin{split}
\varphi_q:= & \frac{\psi_q}{\sum_{\widehat{q} \in \Qcal_g} \psi_{\widehat{q}}},  \quad \psi_q(x):= \psi\Big( \frac{x-c_q}{l(q)} \Big) \text{ for }q=c_q+\Big(-\frac{l(q)}{2}, \frac{l(q)}{2}\Big)^2, \\
&  \psi \in \Cc^\infty\Big( (-9/16, 9/16)^2; [0,1] \Big),\, \psi=1 \text{ on } [-1/2,1/2]^2. 
\end{split}
\end{equation*}
By triangle inequality, \eqref{dprop}, \eqref{bprop}, \eqref{2304232000'} using that $0\le r_j-\widehat r_j\le \sqrt{\alpha_2}r_j$ and that the cubes $Q_j$ are pairwise disjoint, we obtain
\begin{equation}\label{persaltoint}
\begin{aligned}
|\Huno(\sal_u^{\iii})-\Huno(\Gamma^{\iii})|\le&\,\Huno(\sal_u^{\iii}\setminus\widehat\Gamma^{\iii})+|\Huno(\sal_u^{\iii}\cap\widehat\Gamma^{\iii})-\Huno(\Gamma^{\iii})|\\
\le&\,\Huno(\sal_u^{\iii}\triangle\widehat\Gamma^{\iii})+\sum_{j:x_j\notin\sal_u^{\fff}}|\Huno(\sal_u^{\iii}\cap\Gamma_j)-\Huno(T_{j}^{\gamma_j}\cap Q_{j,\widehat r_j})|\\
\le&\,\Huno(\sal_u^{\iii}\triangle\widehat\Gamma^{\iii})+\sum_{j:x_j\notin\sal_u^{\fff}}\Huno(\Gamma_j\setminus\sal_u^{\iii})
\\
&\,\phantom{\Huno(\sal_u^{\iii}\triangle\widehat\Gamma^{\iii})}
+\sum_{j:x_j\notin\sal_u^{\fff}}|\Huno(\Gamma_j)-2\widehat{r}_j
|\\
\le&\, \Huno(\sal_u^{\iii}\triangle\widehat\Gamma^{\iii})+\alpha_2(1+5\Huno(\sal_u))+(\alpha_2+\sqrt{\alpha_2})\Huno(\sal_u).
\end{aligned}
\end{equation}
Furthermore, by construction,
\begin{equation*}
\begin{aligned}
\sal_u^{\fff}\triangle \Gamma^{\fff}=&\,\Big(\sal_u^{\fff}\setminus\bigcup_{j:x_j\in\sal_u^{\fff}}\ol Q_j\Big)\cup\bigcup_{j:x_j\in\sal_u^{\fff}}\big(\ol Q_j\cap(\sal_u^{\fff}\triangle(\Gamma_j\cap Q_{j,\widehat r_j}) )\big)\\
\subseteq&\, \Big(\sal_u^{\fff}\setminus\bigcup_{j:x_j\in\sal_u^{\fff}}\ol Q_j\Big)\cup\bigcup_{j:x_j\in\sal_u^{\fff}}\big(\ol Q_j\cap(\sal_u^{\fff}\triangle\Gamma_j)\big)\\
&\,\cup\bigcup_{j=1}^{J} \big(\sal_u\cap(\ol Q_j\sm Q_{j,\widehat r_j})\big)\\
=&\, \big(\sal_u^{\fff}\triangle \widehat\Gamma^{\fff}\big)\cup \bigcup_{j=1}^{J} \big(\sal_u\cap(\ol Q_j\sm Q_{j,\widehat r_j})\big)\,,
\end{aligned}
\end{equation*}
whence, using \eqref{0106231042} and property \eqref{dprop}, we deduce that 
\begin{equation}\label{persaltofraz}
\begin{aligned}
\Huno(\sal_u^\fff \triangle \Gamma^\fff)\le&\,\Huno(\sal_u^{\fff}\triangle \widehat\Gamma^{\fff})
+\sum_{j=1}^J \Huno(\sal_u \cap (Q_j \sm Q_{j, \widehat{r}_j})) \\
\leq&\,\Huno(\sal_u^\fff \triangle \widehat\Gamma^\fff)+ \sum_{j=1}^J \Huno(\Gamma_j \cap (Q_j \sm Q_{j, \widehat{r}_j})) + \Huno\Big(\sal_u \triangle \bigcup_{j=1}^J  \Gamma_j\Big) 
\\&
\leq\, \Huno(\sal_u^\fff \triangle \widehat\Gamma^\fff)+ 2\alpha_1 \sqrt{\alpha_2} \Huno(\sal_u) + \alpha_2(1+5\Huno(\sal_ u)).
\end{aligned}
\end{equation}
By summing \eqref{persaltoint} and \eqref{persaltofraz}, using \eqref{2404230943}, we obtain
\begin{equation*}
|\Huno(\sal_u^{\iii})-\Huno(\Gamma^{\iii})|+\Huno(\sal_u^\fff \triangle \Gamma^\fff)\le \alpha_2(3+16\Huno(\sal_u))+\sqrt{\alpha_2}(1+2\alpha_1)\Huno(\sal_u).
\end{equation*}
By \eqref{0106231130} and \eqref{2905232239} it follows that $[v](x) \in \Z$ for $\Huno$-a.e.\ $x \in \Gamma^\iii$.

By definition and  \eqref{0106231155}, \eqref{3005231017}, \eqref{2405232347'} it is immediate that $v \in SBV^p(\Omega) \cap W^{1,p}(\Omega \sm (\Gamma \cup \overline{\widetilde{\omega}}))$, that $\{\nabla u\neq \nabla v\} \subset \widetilde{\omega}\cup \widehat{\omega}$ (since $\nabla(\widehat{u}_j-u)=0$ \EEE in $Q_j$, see \eqref{2405231920}), that $\{u\neq v\}\subset \{\nabla u\neq \nabla v\} \cup \bigcup_{j \colon z_j\neq0} H_j$ (whose $\Ld$-measure vanishes with $\alpha_2$ and $\varrho$ from property \eqref{eprop}), and that $v=0$ in $\widetilde{\omega}$.

Summing up \eqref{2405232347'} over $q \in \Qscr_g$ we obtain (since the cubes $q''$ may overlap at most 8 times)
\begin{equation}\label{0106231216}
\Huno(\partial^* \bigcup_{q \in \Qscr_g} \omega_q) \leq 8(c\vee \tilde{c})/\eta \, \Huno\Big(\sal_{\widehat{v}}\sm \bigcup_{j=1}^{J} Q_{j, \widehat{r}_j}\Big) \leq C(\alpha_1, \alpha_2, p)
\end{equation}
($\tilde{c}$ is the constant in \eqref{2302241844}) \EEE with $C(\alpha_1,\alpha_2, p)$ vanishes with $\alpha_2$ (for $\alpha_1 \leq 1/4$), since
\begin{equation*}
\sal_{\widehat{v}}\sm \bigcup_{j=1,\dots, J} Q_{j, \widehat{r}_j} \subset \Big(\sal_u \sm \bigcup_{j=1,\dots, J} \ol Q_{j}\Big) \cup \bigcup_{j\colon x_j \notin \sal_u^\fff} (\Sigma_{j, \widehat{r}_j}^1 \cup \Sigma_{j, \widehat{r}_j}^2)  \cup \bigcup_{j \colon x_j \notin \sal_u^\fff} ((Q_j \sm Q_{j, \widehat{r}_j}) \cap (\Gamma_j \cup T_j^{\gamma_j}) 
\end{equation*}
and from the properties of $\Gamma_j$, \eqref{0106230928}, \eqref{0106231042},  \eqref{0106230939}, \eqref{0106230941}. 
Therefore, adding the estimates of the $\Huno$-measures of $\partial^*\omega_j$ in \eqref{2905232231} over $j$  such that $x_j \notin \sal_u^\fff$   plus $\partial^* \omega_j^\pm$ in \eqref{3005231017} over  $j$ such that $x_j \in \sal_u^\fff$  together with \eqref{0106231216}, we conclude that  $\Huno(\partial^*\widehat{\omega})$  vanishes with $\alpha_2$.
In view of the definition of  $\widetilde{\omega}$  (in particular of $\Qscr_b$) we get
\begin{equation*}
\Huno(\partial  \widetilde{\omega} ) \leq \frac{40}{9} \eta \, \Huno\Big(\sal_{\widehat{v}}\sm \bigcup_{j=1,\dots, J} Q_{j, \widehat{r}_j}\Big),
\end{equation*}
where above a factor 8 accounts for the overlapping of squares $q''$; as well, $\Huno(\partial \widetilde{\omega})$ vanishes with $\alpha_2$ by \eqref{0106231216}.

Eventually, arguing as in \cite[Step~3.1 in Theorem~5.1]{CCS22} for the cubes $Q_j$ such that $x_j \in \sal_u^\fff$ one proves that $\Huno(\Gamma^\fff \cap \{ v^\pm\neq u^\pm\})$ vanishes with $\alpha_2$, while (again following \cite[Step~3.3 in Theorem~5.1]{CCS22} with the full gradient in place of the symmetrized gradient) 
one \MMM deduces from the last estimate in \eqref{2405232347'} that
\[
\int_{\bigcup_{q \in \Qscr_g} \omega_q} |\nabla v|^p \ud x \leq 8c \EEE \int_{\bigcup_{q \in \Qscr_g} \omega_q} |\nabla \widehat{v}|^p \ud x,
\]
which together with the estimates on the gradients in \eqref{2905232231} and \eqref{3005231017} gives that
\begin{equation*}
\begin{split}
\int_{\widehat{\omega}} |\nabla v|^p \ud x  \leq C'(\alpha_2, p) \int_{\widehat{\omega}}  |\nabla u|^p \ud x,
\end{split}
\end{equation*}
for $C'(\alpha_2, p)$ a positive constant vanishing with $\alpha_2$.
Being $\{\nabla u\neq \nabla v\} \subset \widetilde{\omega}\cup \widehat{\omega}$, $v=0$ in $\widetilde{\omega}$, and since the measure of $\widehat{\omega}$ vanishes with $\alpha_2$, we obtain
\begin{equation*}
\begin{split}
\int_{\Omega} |\nabla v|^p \ud x  \leq (1+C''(\alpha_2, \varrho, p))  \int_\Omega |\nabla u|^p \ud x,
\end{split}
\end{equation*}
where $C''(\alpha_2, p)>0$  vanishes with $\alpha_2$.

\MMM We conclude since $\alpha_1$, $\alpha_2$, $\varrho$ may be fixed arbitrarily small.
\end{proof}
We recall a fundamental technical tool, \cite[Proposition~3.2]{CFI18}. In \cite{CFI18} the result is stated for balls, and it holds for cubes as well. Moreover, it holds true also for $GSBD^p$ functions, in place of $SBD^p$ (see e.g.\ \cite[Proof of Proposition~3.1]{CCI19}). As usual, 
 {$Q_\rho:=(-\rho,\rho)^2$.}
\begin{proposition}\label{prop:32CFI}
For every $p \in (1,\infty)$ there exist $c>0$ and  $\eta\in (0,1)$ such that if $u \in GSBD^p(Q_{2\varrho})$, $\varrho>0$, satisfies
\[
\Huno(\sal_u \cap Q_{2\varrho})< \eta (1-s) \varrho
\] 
for  some $s \in (0,1)$, then there is a countable family $\mathscr{F}=\{B\}$ of closed balls of radius $r_B < 2(1-s) \varrho$ and center $x_B \in \ol Q_{2s{\varrho}}$ such that their union is compactly contained in $B_{2\varrho}$, and a field $w \in SBD^p(Q_{2\varrho})$ such that
\begin{itemize}
\item[(i)] $\varrho^{-1}\sum_{B\in\mathscr{F}} \Ld(B) + \sum_{B\in\mathscr{F}} \Huno(\partial B) \leq c/\eta \,\, \Huno(\sal_u \cap Q_{2\varrho})$;
\item[(ii)] $\Huno (\sal_u \cap \cup_{B\in\mathscr{F}}\partial B)= \Huno\big((\sal_u \cap Q_{2s\varrho}) \sm \cup_{B\in\mathscr{F}}B  \big)=0$;
\item[(iii)] $w=u$ $\Ld$-a.e.\ on $Q_{2 \varrho}\sm \cup_{B\in\mathscr{F}} B$;
\item[(iv)] $w \in W^{1,p}(Q_{2s\varrho}; \R^2)$ and $\Huno(S_w \sm \sal_u)=0$;
\item[(v)]
\begin{equation}\label{2405231627'}
\int_{\cup_{B\in\mathscr{F}}B} |e(w)|^p \ud x \leq c \int_{\cup_{B\in\mathscr{F}}B} |e(u)|^p \ud x .
\end{equation}
\end{itemize}
\end{proposition}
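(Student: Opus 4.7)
The plan is to follow the strategy of \cite[Proposition~3.2]{CFI18} (whose proof was extended to the $GSBD^p$ setting in \cite[Proof of Proposition~3.1]{CCI19}), with the only cosmetic adaptation that balls are replaced by cubes in the ambient domain. The cornerstone is the local approximate Korn inequality of \cite{ChaConFra14}: for every ball $B_r(x_0)\subset Q_{2\varrho}$ such that $\Huno(S_u\cap B_r(x_0))\leq \eta_0 r$ for a small universal $\eta_0$, there exists an infinitesimal rigid motion $a$ and a set $E\subset B_r(x_0)$ of finite perimeter with $\Huno(\partial^* E)\leq C\,\Huno(S_u\cap B_r(x_0))$, $\Ld(E)\leq C\,(\Huno(S_u\cap B_r(x_0)))^{2}$, and such that $u=a$ on $B_r(x_0)\setminus E$, with $\|e(u-a)\|_{L^p(B_r(x_0))}\leq C\|e(u)\|_{L^p(B_r(x_0))}$.

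First, I would perform an iterative construction of the bad family $\mathscr{F}$. Starting from a Vitali fine cover of $S_u\cap \ol Q_{2s\varrho}$ by balls centred at jump points, select a ball to be \emph{bad} at radius $r_B<2(1-s)\varrho$ whenever the local jump concentration $r_B^{-1}\Huno(S_u\cap B)$ exceeds the threshold $\eta_0$ required by the local Korn inequality. Extract by Besicovitch a countable subfamily $\mathscr{F}=\{B\}$ whose union absorbs $S_u\cap \ol Q_{2s\varrho}$ up to an $\Huno$-null set; this yields (ii). The bound (i) on $\sum \Huno(\partial B)$ and on $\varrho^{-1}\sum \Ld(B)$ comes from packing the radii $r_B$ against the total jump length $\Huno(S_u\cap Q_{2\varrho})$ and using $r_B<2(1-s)\varrho$ so that $\Ld(B)\lesssim (1-s)\varrho\,\Huno(\partial B)$.

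Next, I would build $w$ by gluing. Outside $\cup_\mathscr{F} B$ set $w=u$; on each $B\in\mathscr{F}$ apply the local Korn step to obtain a rigid motion $a_B$ and an exceptional set $E_B\subset B$, and define $w=a_B$ on $E_B$, $w=u$ on $B\setminus E_B$. This immediately gives (iii) and the jump inclusion in (iv), since new jumps can only appear on $\partial^* E_B\subset \cup_\mathscr{F}\partial B$, which is covered away from $S_u$ by property (ii). To upgrade $w$ to $W^{1,p}(Q_{2s\varrho};\R^2)$ I would smooth the transition across $\partial B$ by a convolution at scale $\sim r_B$ with an adapted partition of unity, exploiting $e(a_B)\equiv 0$ and the fact that, by construction, $w$ has no jump inside any $B\in\mathscr{F}$ outside a set already absorbed by $\cup_{B'\in\mathscr{F}}\partial B'$. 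Estimate (v) then follows by summing the local bound $\|e(u-a_B)\|_{L^p(B)}^p\leq C\|e(u)\|_{L^p(B)}^p$ over $B\in\mathscr{F}$ and invoking the uniformly bounded overlap guaranteed by the Besicovitch extraction.

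The main obstacle is consistency across scales in the iterative selection: one must ensure that once a ball $B$ is declared bad no finer ball inside $B$ is independently selected with an incompatible rigid motion $a_{B'}$, otherwise the gluing step would generate a spurious jump in $w$ not belonging to $S_u\cup\bigcup_\mathscr{F}\partial B$. This is resolved, exactly as in \cite{CFI18}, by a stopping-time argument: at each scale one only inspects balls disjoint from those already declared bad. A secondary technicality is the control of the constants $c$ and $\eta$: they depend only on $p$ and on the geometric constant in \cite{ChaConFra14}, and remain uniform when passing from balls to cubes since $Q_{2\varrho}$ and $B_{2\sqrt{2}\varrho}$ are comparable up to fixed factors.
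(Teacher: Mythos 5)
This proposition is not proved in the paper; it is recalled verbatim from \cite[Proposition~3.2]{CFI18}, with the remarks that it holds for cubes as well as balls and in $GSBD^p$ (citing \cite{CCI19}). So your proposal should be judged against the argument in \cite{CFI18}, which is a Whitney-type scheme built on slicing/truncation estimates rather than on the Korn--Poincar\'e inequality of \cite{ChaConFra14} as a black box.

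There is a genuine gap in your sketch, located precisely in the statement of the key local lemma. You assert that \cite{ChaConFra14} produces, on a ball $B$ with small jump, a rigid motion $a$ and a finite-perimeter set $E\subset B$ such that \emph{$u=a$ on $B\setminus E$}. This is false: if it were true, any $u\in GSBD^p$ with small jump would be piecewise rigid up to a small exceptional set, whereas $e(u)$ can be an arbitrary $L^p$ field away from $J_u$. What \cite{ChaConFra14} actually gives is an estimate of the form $\|u-a\|_{L^{dp/(d-1)}(B\setminus\omega)}\leq C\|e(u)\|_{L^p(B)}$, with $|\omega|\lesssim (\Huno(J_u\cap B))^{d/(d-1)}$, together with the existence of a modified function $v\in W^{1,p}$ on a slightly smaller set agreeing with $u$ off $\omega$. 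Nowhere is pointwise coincidence $u=a$ obtained. Because of this misstatement, your subsequent construction ($w:=a_B$ on $E_B$, $w:=u$ on $B\setminus E_B$) does not produce a Sobolev function: the traces of $u$ and $a_B$ on $\partial^*E_B$ do not match, and $u$ itself retains its jump on $(S_u\cap B)\setminus E_B$, so $w$ still has a nontrivial jump set inside $B$. A convolution of $w$ at scale $\sim r_B$ cannot repair this, since it neither matches traces across $\partial^*E_B$ nor restores agreement with $u$ on $\partial B$ as required by items (iii)--(iv) (recall that by (ii) the boundaries $\partial B$ carry $\Huno$-null jump of $u$, and $S_w\setminus S_u$ must be $\Huno$-null, so $w$ is not allowed to jump on $\partial B$ either). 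The actual construction in \cite[Theorem~2.1 and Proposition~3.2]{CFI18} addresses this with a Whitney decomposition \emph{inside} each bad ball, local constant/rigid approximants on each Whitney cube (using a boundary layer where one keeps $w=u$), and a partition of unity to glue; the consistency of adjacent local approximants is controlled quantitatively, which is the missing content in your sketch. Your proposal correctly identifies the Besicovitch/stopping-time selection of the bad family $\mathscr{F}$ and the overlap-counting for (i), but the central step of constructing $w$ with matching traces and $W^{1,p}$ regularity on $Q_{2s\varrho}$ is not carried out correctly.
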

The previous result may be directly modified to obtain a $SBV^p$ version. 
\begin{proposition}\label{prop:analoga32CFI}
For every $p \in (1,\infty)$ there exist $c>0$ and $\eta\in (0,1)$ such that if $u \in SBV^p(Q_{2\varrho})$, $\varrho>0$, satisfies
\[
\Huno(\sal_u \cap Q_{2\varrho})< \eta (1-s) \varrho
\] 
for some $s \in (0,1)$, then there is a countable family $\mathscr{F}=\{B\}$ of closed balls of radius $r_B < 2(1-s) \varrho$ and center $x_B \in \ol Q_{2s{\varrho}}$ such that their union is compactly contained in $Q_{2\varrho}$, and a field $w \in SBV^p(Q_{2\varrho})$ such that
\begin{itemize}
\item[(i)] $\varrho^{-1}\sum_{B\in\mathscr{F}} \Ld(B) + \sum_{B\in\mathscr{F}} \Huno(\partial B) \leq c/\eta \,\, \Huno(\sal_u \cap Q_{2\varrho})$;
\item[(ii)] $\Huno (\sal_u \cap \cup_{B\in\mathscr{F}}\partial B)= \Huno\big((\sal_u \cap Q_{2s\varrho}) \sm \cup_{\mathscr{F}}B  \big)=0$;
\item[(iii)] $w=u$ $\Ld$-a.e.\ on $Q_{2 \varrho}\sm \cup_{B\in\mathscr{F}} B$;
\item[(iv)] $w \in W^{1,p}(Q_{2s\varrho})$ and $\Huno(\sal_w \sm \sal_u)=0$;
\item[(v)]
\begin{equation}\label{2405231627}
\int_{\cup_{B\in\mathscr{F}}B} |\nabla w|^p \ud x \leq c \int_{\cup_{B\in\mathscr{F}}B} |\nabla u|^p \ud x. 
\end{equation}
\end{itemize}
\end{proposition}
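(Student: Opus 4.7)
The plan is to reproduce the construction used to establish Proposition~\ref{prop:32CFI} (that is, \cite[Proposition~3.2]{CFI18}), replacing every application of the approximated Korn--Poincar\'e inequality in $GSBD^p$ by the classical approximated Poincar\'e--Wirtinger inequality for $SBV^p$ functions with small jump set, as proved by De Giorgi--Carriero--Leaci \cite{DeGCarLea}. All the other ingredients of the $GSBD^p$ proof are either purely geometric, or carry over verbatim after this substitution.

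First, I would run the iterated ball construction of \cite{CFI18} on $\sal_u \cap Q_{2\varrho}$. This produces a countable family $\mathscr{F}=\{B\}$ of closed balls with centers in $\ol{Q}_{2s\varrho}$ and radii $r_B<2(1-s)\varrho$, compactly contained in $Q_{2\varrho}$ thanks to the smallness assumption $\Huno(\sal_u\cap Q_{2\varrho})<\eta(1-s)\varrho$ (for $\eta$ small enough). Conclusions~(i) and (ii) are geometric statements on $\mathscr{F}$ and do not depend on the functional setting, so they transfer unchanged from the $GSBD^p$ to the $SBV^p$ case. Setting $w=u$ on $Q_{2\varrho}\sm\bigcup_\mathscr{F}B$ yields (iii) at once and, since by (ii) the balls cover $\Huno$-almost all of $\sal_u\cap Q_{2s\varrho}$, it will give $\Huno(\sal_w\sm \sal_u)=0$ once $w$ is defined on the balls, which is the content of (iv).

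On each single $B\in\mathscr{F}$ I would apply the approximated Poincar\'e--Wirtinger inequality for $SBV^p$: there exist a constant $c_B\in \R$ and an exceptional set $\omega_B\subset B$ with $\Ld(\omega_B)\le C\,\Huno(\sal_u\cap B)^{2}$ such that the $L^{p^*}$-norm of $u-c_B$ on $B\sm\omega_B$ is bounded by the $L^p$-norm of $\nabla u$ on $B$. One then defines $w$ on $B$ by a regularization (a convolution at scale comparable to $r_B$) of the truncated field $u\chi_{B\sm\omega_B}+c_B\chi_{\omega_B}$, exactly as in \cite{CFI18}. This produces $w\in W^{1,p}(B)$ with $\Huno(\sal_w\cap B)=0$ and with the $L^p$-norm of $\nabla w$ on $B$ controlled, up to a universal multiplicative constant depending only on $p$, by the $L^p$-norm of $\nabla u$ on $B$. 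Summing over $\mathscr{F}$ and using the uniformly bounded overlap of the enlarged balls gives the estimate~\eqref{2405231627} of~(v), and the $W^{1,p}$-regularity on $Q_{2s\varrho}$ claimed in~(iv), together with $\Huno(\sal_w\sm\sal_u)=0$.

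The only point deserving attention is the replacement of the Korn--Poincar\'e step by a Poincar\'e--Wirtinger step; this is actually a simplification, since scalar constants play the role of infinitesimal rigid motions and no Korn inequality is invoked. In particular, no genuinely new obstacle appears with respect to Proposition~\ref{prop:32CFI}, and the constants $c$ and $\eta$ can be taken as in \cite[Proposition~3.2]{CFI18}, possibly up to universal factors depending only on $p$.
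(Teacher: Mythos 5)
The high-level idea is right (transplant the $GSBD^p$ ball-covering construction to the scalar setting, replacing Korn--Poincar\'e by Poincar\'e and infinitesimal rigid motions by constants), but the mechanism you describe on each ball does not produce the claimed conclusions. You propose to apply the approximated Poincar\'e--Wirtinger inequality of De Giorgi--Carriero--Leaci on a ball $B\in\mathscr{F}$ and then convolve the truncated field ``at scale comparable to $r_B$''. The De Giorgi--Carriero--Leaci inequality gives a $W^{1,p}$ truncation differing from $u$ on a set of controlled \emph{volume} only; it controls neither the perimeter of that exceptional set nor its position inside $B$ (in particular it does not keep the modification away from $\partial B$). A mollification at scale $\sim r_B$ would then generically alter $u$ across $\partial B$, so after patching $w=u$ outside $\cup_{\mathscr F}B$ you would in general create a jump on $\cup_{\mathscr F}\partial B$, and conclusions (iii) and (iv) ($w=u$ a.e.\ outside the balls, $w\in W^{1,p}(Q_{2s\varrho})$, $\Huno(\sal_w\sm\sal_u)=0$) would fail.

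What the paper actually does is different: it adapts the \emph{proof} of \cite[Theorem 2.1]{CFI18}, not merely swap one cited Poincar\'e-type inequality for another. That theorem is precisely the refinement of the approximated Poincar\'e--Korn inequality of \cite{ChaConFra14} that is needed here --- in dimension two it yields an exceptional set of controlled \emph{perimeter} that is compactly contained in the ball, so that the construction coincides with $u$ near $\partial B$ and the pieces glue without introducing new jumps. The scalar adaptation consists in replacing, throughout that proof, the $2$d compatibility estimate for the entries of the constant symmetric matrix $e(\phi(u))$ (eq.\ (2.12) of \cite{CFI18}) by the elementary $1$d slicing identity $\nabla\phi(u)\cdot(x-y)=\int_{S_{x,y}}(u^\nu_z)'(t)\,\mathrm{d}t$ for the full constant matrix $\nabla\phi(u)$, and replacing infinitesimal rigid motions by constants in the Poincar\'e step. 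So while your observation that ``scalar constants play the role of rigid motions'' is exactly the right instinct, the conclusion cannot be reached by quoting \cite{DeGCarLea}; one has to re-prove the $SBV^p$ analogue of \cite[Theorem 2.1]{CFI18} with the modifications above, and only then run the covering argument of \cite[Proposition 3.2]{CFI18}.
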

\begin{proof}
We notice that it is enough to follow the proof of \cite[Theorem~2.1]{CFI18}, from which \cite[Proposition~3.2]{CFI18} follows, and use the fact that, if $u \in SBV^p$, one can control the components of the constant matrix $\nabla \phi(u)$ in place of those of $e(\phi(u))$ (see (2.12) in \cite{CFI18} and its consequences) by
\[
\nabla \phi(u) \cdot (x-y) = \phi(u)(x)- \phi(u)(y)= \int_{S_{x,y}} (u^\nu_z)'(t) \, \mathrm{d}t, 
\] 
where $u^\nu_z(t):=u(z+t \nu)$, for $\nu:= \frac{x-y}{|x-y|}$, $z:=(\mathrm{Id}-\nu \otimes \nu)x$. Moreover, a constant in place of an infinitesimal rigid motion appears in the Poincar\'e's inequality for $u$ on $Q_{\ol x, \ol y}$.
\end{proof}
By arguing as in the proof of Theorem \ref{thm:densitySBVaux},  using Proposition~\ref{prop:32CFI} in place of Proposition~\ref{prop:analoga32CFI}, \EEE one can show that also the following result holds true.
\begin{theorem}\label{thm:densityGSBD}
Let $\Omega\subset \R^2$ be a bounded open set of finite perimeter, $p\in (1,+\infty)$, $u \in GSBD^p(\Omega)$, and $\varepsilon>0$. Then there exist:
\begin{itemize}
\item closed sets $\Gamma^\iii$, $\Gamma^\fff$, finite unions of disjoint $C^1$ curves; 
\item a set $\tilde{\omega}$, finite union of cubes;
\item a set of finite perimeter $\widehat{\omega}$;
\item a function $v \in GSBD^p(\Omega) \cap W^{1,p}(\Omega \sm (\Gamma \cup \overline{\tilde{\omega}}); \R^2)$, where $\Gamma:=\Gamma^i \cup \Gamma^\fff$;
\end{itemize}
such that $\{\nabla u\neq \nabla v\} \subset  \tilde{\omega}\cup \widehat{\omega}$, $\Ld(\{u\neq v\})<\varepsilon$, $v=0$ in $\tilde{\omega}$, $[v](x)\in \Z^2$ for $\Huno$-a.e.\ $x\in \Gamma^\iii$, and
\begin{equation*}
\begin{split}
|\Huno(S^\iii_u)-\Huno(\Gamma^\iii)| + \Huno(S^\fff_u \triangle \Gamma^\fff)+\Huno(\partial \tilde{\omega}) + \Huno(\partial^* \widehat{\omega}) \leq \varepsilon, \quad \int_{\Omega\sm \tilde{\omega}} |e(v)|^p \ud x  \leq (1+\varepsilon)  \int_\Omega |e(u)|^p \ud x,
\end{split}
\end{equation*}
where $S^\fff_u:=\{x \in \sal_u \colon [u] \notin \Z^2\}$.
Moreover, $\Huno(\Gamma \cap \{v^\pm \neq u^\pm\}) \leq \varepsilon$, where $v^\pm$ and $u^\pm$ denote the traces of $v$ and $u$ on the two sides of $\Gamma$, and, if $u\in SBD^p(\Omega)$, then also $v\in SBD^p(\Omega)$.
\end{theorem}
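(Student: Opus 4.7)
The plan is to adapt the three-step proof of Theorem~\ref{thm:densitySBVaux} by substituting the scalar target $\R$ with $\R^2$, the lattice $\Z$ with $\Z^2$, the full gradient $\nabla$ with the symmetrized gradient $e$, and the local approximation tool Proposition~\ref{prop:analoga32CFI} with its $GSBD^p$ analogue Proposition~\ref{prop:32CFI}; all the geometric and measure-theoretic estimates carry over verbatim. The covering of Step~1 proceeds as for $\ffi\in SBV^p$: one decomposes $\sal_u = S_u^\fff \cup \bigcup_{z\in \Z^2\sm\{0\}} \widehat S_u^z$ with $\widehat S_u^z:=\{x\in \sal_u\colon [u](x)=z\}$, and extracts a finite family of disjoint cubes $\ol Q_j$ together with $C^1$-graphs $\Gamma_j\subset M_{k_j}^{z_j}$ satisfying the analogues of properties $(a)$--$(e)$. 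This step uses only the $\Huno$-rectifiability of $\sal_u$ and the density estimates \eqref{2304232000}--\eqref{2304232000'}, which are identical in $GSBD^p$.

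In Step~2.1, for $z_j\in \Z^2\sm\{0\}$, one sets $\widehat u_j := u + z_j\chi_{H_j}$ componentwise; this is still a $GSBD^p$ function on $\ol Q_j$. The selection of a good transversal $T_j^{\gamma_j}$ on which $[\widehat u_j]\equiv z_j$ outside a $\Huno$-null set relies on the slicing characterization of $GSBD^p$: each scalar component admits one-dimensional $SBV$ slices along $\Huno$-a.e.\ line, which is exactly what is needed to enforce both \eqref{2405231725} and \eqref{1609231124}. The dyadic refinement producing $Q_{j,\widehat r_j}^\pm$ and the construction of $v_j^\pm$ then proceed exactly as in the scalar proof, with Proposition~\ref{prop:32CFI} producing $w_q\in SBD^p(q'')\cap W^{1,p}(q';\R^2)$ and $\int_{\omega_q}|e(w_q)|^p\,\ud x\leq c\int_{\omega_q}|e(\widehat u_j)|^p\,\ud x$ in place of \eqref{2405232347}; the Whitney-type convex combination preserves $W^{1,p}$ regularity on each good cube and, as in \cite[Step~3.3 of Theorem~5.1]{CCS22}, yields the expected $L^p$-estimate for $e(v_j^\pm)$. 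Step~2.2 ($z_j=0$) is essentially already contained in \cite[Theorem~4.1]{CCS22}.

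Step~3 glues the pieces via the Whitney lattice $\Qscr=\Qscr_g\cup\Qscr_b$, defines $\widetilde\omega:=\bigcup_{q\in\Qscr_b}q$, and $v$ by \eqref{3005232001} with $0\in \R^2$ replacing the scalar $0$. The membership $v\in GSBD^p(\Omega)\cap W^{1,p}(\Omega\sm(\Gamma\cup\overline{\widetilde\omega});\R^2)$ then follows because on good cubes that do not meet any $\Gamma_j$ the convex combination of $W^{1,p}$ functions is again $W^{1,p}$, while the integer/fractional jump structure across $\Gamma^\iii\cup\Gamma^\fff$ is preserved by the trace-matching enforced at the boundaries $T_j^{\gamma_j}$ and $\partial Q_{j,\widehat r_j}$. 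The $\Huno$-inequalities analogous to \eqref{persaltoint}--\eqref{persaltofraz} and the $L^p$-estimate on $e(v)$ go through with only cosmetic adjustments, and the arbitrariness of $\alpha_1,\alpha_2,\varrho$ delivers the prescribed $\varepsilon$. The final clause $u\in SBD^p(\Omega)\Rightarrow v\in SBD^p(\Omega)$ follows because Proposition~\ref{prop:32CFI} preserves $SBD^p$ and the $L^p(\Omega;\R^2)$-bound on $u$ transfers to $v$ through the convex combinations (with $v\equiv 0$ on $\widetilde\omega$). The only genuine obstacle is the slicing step in Step~2.1 used to choose $T_j^{\gamma_j}$: in the scalar $SBV^p$ setting this is immediate, while in $GSBD^p$ it must be performed componentwise via the slicing definition of the space, intersecting the two resulting $\Huno$-null exceptional sets.
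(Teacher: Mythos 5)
Your proposal is correct and follows exactly the route the paper intends: it mirrors the three-step proof of Theorem~\ref{thm:densitySBVaux} with Proposition~\ref{prop:32CFI} in place of Proposition~\ref{prop:analoga32CFI}, replacing $\R$ by $\R^2$, $\Z$ by $\Z^2$, and $\nabla$ by $e$, which is precisely what the paper states before Theorem~\ref{thm:densityGSBD}. Two minor remarks: the slicing concern you raise for the choice of $T_j^{\gamma_j}$ is largely unnecessary since conditions \eqref{2405231725}--\eqref{1609231124} are geometric properties of the rectifiable set $\sal_u$ identical in both settings (the slicing definition of $GSBD^p$ only enters in granting that $[u]$ and $u^\pm$ are $\Huno$-a.e.\ well-defined, after which the argument is verbatim); and in the last clause the relevant bound ensuring $v\in SBD^p$ is the $L^1$ (not $L^p$) summability of $u$ inherited from $u\in BD(\Omega)$.
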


\section{Description of the problem}\label{sec:model}
Let $\Omega$ be a bounded and open subset of $\R^2$ with Lipschitz continuous boundary and let $\Omega'\subset\subset\Omega$ be an open set. We introduce
\begin{equation}\label{admissiblebdrysenzaw}
	\mathcal {AD}(\Om,\Omega'):=\{u\in SBV^2(\Om;\Ss^1)\,:\,\overline{S}_u\subset\overline{\Omega'}\},
\end{equation}
where 
$\sal_u$  denotes the jump set of $u$. 
For every $\ep>0$\,, let $\mathcal G_\ep:{SBV^2(\Omega;\Ss^1)}\to [0,\infty]$ be the functional defined by 
\begin{equation}\label{defEne}
	\mathcal G_\ep(u):=\left\{\begin{array}{ll}
		\displaystyle \int_{\Omega}\frac 1 2 |\nabla u|^2\ud x+\frac 1 \ep\Huno(\overline{S}_u)&\textrm{if }u\in \mathcal{AD}(\Omega,\Omega')\\
		+\infty&\textrm{elsewhere in }{SBV^2(\Omega;\Ss^1)}\,.
	\end{array}
	\right.
\end{equation}
{In what follows, we will adopt also localized versions of the functional  $\mathcal G_\ep $;  more precisely, for any $u\in \mathcal{AD}(\Omega,\Omega')$ and for any open set $A$ with  $\Omega'\subset\subset A\subset\subset\Omega$\,, we will denote by $\mathcal G_\ep(u;A)$ the functional in \eqref{defEne} with $\Omega$ replaced by $A$\,.
}

\EEE
Notice that, since  $u\in H^1(\Om\setminus \overline\Om';\mathbb S^1)$, it follows that 
\begin{align}
	\supp{Ju}\subseteq  \overline\Om'\qquad\textrm{for every } u\in \mathcal{AD}(\Om,\Om').\label{supporto_Ju}
\end{align} 
Indeed, let $\varphi\in \Cc^\infty(\Om\setminus \overline\Om')$, and write 
\begin{align*}
	\langle Ju,\varphi\rangle_\Om&=\frac{1}{2}\int_\Om \frac{\partial\varphi}{\partial x_{2}}\ud([u^1\mad_1u^{2}]-[u^{2}\mad_1u^{1}])-\frac{1}{2}\int_\Om\frac{\partial\varphi}{\partial x_{1}}\ud([u^1\mad_{2}u^{2}]-[u^{2}\mad_{2}u^{1}])\nonumber\\
	&=\frac{1}{2}\int_{\Om\setminus \overline\Om'} \frac{\partial\varphi}{\partial x_{2}}\Big(u^1\frac{\partial u^2}{\partial x_1}-u^2\frac{\partial u^1}{\partial x_1}\Big)\ud x-\frac{1}{2}\int_{\Om\setminus \overline\Om'}\frac{\partial\varphi}{\partial x_{1}}\Big(u^1\frac{\partial u^2}{\partial x_2}-u^2\frac{\partial u^1}{\partial x_2}\Big)\ud x\nonumber\\
	&=\langle Ju,\varphi\rangle_{\Om\setminus \overline\Om'}=0,
\end{align*}
where the last equality follows since $u\in H^1(\Om\setminus \overline\Om';\mathbb S^1)$ has null distributional Jacobian determinant in $\Om\setminus \overline\Om'$.

\subsection{$\Gamma$-convergence in the subcritical regime}
We introduce the class of atomic measures, namely
$$X(\Omega):=\Big\{\mu\in \mathcal M(\Om)\,:\,\mu=\sum_{n=1}^Nz^n\delta_{x^n},\;x^n\in \Om,\;z^n\in \mathbb Z\setminus\{0\},\; N\in \mathbb N\Big\}.$$
In \cite[Theorem 3.1]{DLSVG}, the authors show that the rescaled functional $|\log\ep|^{-1}\mathcal G_\ep$ $\Gamma$-converges to the functional 
$\F:X(\Om)\rightarrow \R^+$ defined as $\F(\mu)=\pi |\mu|(\Om)$.
Using a density argument, and in particular Proposition \ref{cor:21041335}, this result can be easily extended to the following setting, where the energy functional does not take into account of the closure of the jump set: We introduce
\begin{equation}\label{defEneG}
	\mathcal F_\ep(u):=\left\{\begin{array}{ll}
		\displaystyle \int_{\Omega}\frac 1 2 |\nabla u|^2\ud x+\frac 1 \ep\Huno({S}_u)&\textrm{if }u\in \mathcal{AD}(\Omega,\Omega')\\
		+\infty&\textrm{elsewhere in }{SBV^2(\Omega;\Ss^1)}\,.
	\end{array}
	\right.
\end{equation}
Then 
the following $\Gamma$-convergence result holds:
\begin{theorem}\label{mainthmVECCHIO}
	Let $\Om$ and $\Om'$ be as above; then there hold
	\begin{itemize}
		\item[(i)] (Compactness) Let $\{u_\ep\}_{\ep}\subset {SBV^2(\Omega;\Ss^1)}$ be such that 
		\begin{equation}\label{enboundVECCHIO}
			\sup_{\ep>0}\frac{ \mathcal F_\ep(u_\varepsilon)}{|\log\ep|}\le C,
		\end{equation}
		for some $C>0$\,. Then there exists $\mu\in X(\Omega)$ with $\supp\mu\subseteq\overline\Om'$ such that, up to a subsequence, $\|Ju_\ep-\pi\mu\|_{\flt,\Omega}\to 0$ (as $\ep\to 0$).
		\item[(ii)] ($\Gamma$-liminf inequality) For every $\mu\in X(\Omega)$ with $\supp\mu\subseteq\overline\Om'$ and for every $\{u_\ep\}_{\ep}\subset {SBV^2(\Omega;\Ss^1)}$ such that $\|Ju_\ep-\pi\mu\|_{\flt,\Omega}\to 0$ (as $\ep\to 0$)\,, it holds
		\begin{equation}\label{liminfVECCHIO}
			\pi|\mu|(\Omega)\le \liminf_{\ep\to 0}\frac{ \mathcal F_\ep(u_\varepsilon)}{|\log\ep|}\,.
		\end{equation}
		\item[(iii)] ($\Gamma$-limsup inequality) For every $\mu\in X(\Omega)$ with $\supp\mu\subseteq\overline\Om'$, there exists  $\{u_\ep\}_{\ep}\subset  {SBV^2(\Omega;\Ss^1)}$ with $\|Ju_\ep-\pi\mu\|_{\flt,\Omega}\to 0$ (as $\ep\to 0$)\,, such that
		\begin{equation}\label{eq:limsupVECCHIO}
			\pi|\mu|(\Omega)\geq\limsup_{\ep\to 0}\frac{\mathcal F_\ep(u_\varepsilon)}{|\log\ep|}\,.
		\end{equation}
	\end{itemize}
\end{theorem}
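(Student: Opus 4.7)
The plan is to reduce (i)--(iii) to the corresponding $\Gamma$-convergence result \cite[Theorem~3.1]{DLSVG} for the closed-jump functional $\mathcal G_\ep$, by exploiting the density result Proposition~\ref{cor:21041335}. The guiding observation is that $\mathcal F_\ep(u)\leq \mathcal G_\ep(u)$ for every $u\in\mathcal{AD}(\Omega,\Omega')$, with equality whenever $u$ has essentially closed jump set, i.e.\ $\Huno(\overline{S}_u\sm S_u)=0$.

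The $\Gamma$-limsup inequality (iii) is then immediate: the recovery sequence $\{u_\ep\}\subset \mathcal{AD}(\Omega,\Omega')$ produced in \cite[Theorem~3.1]{DLSVG} for $\mathcal G_\ep$ satisfies $\|Ju_\ep-\pi\mu\|_{\flt,\Omega}\to 0$ and $\mathcal G_\ep(u_\ep)/|\log\ep|\to \pi|\mu|(\Omega)$, and the pointwise bound $\mathcal F_\ep\leq \mathcal G_\ep$ transfers the same upper bound to $\mathcal F_\ep$. For compactness (i) and liminf (ii) I will proceed by a diagonal extraction. Starting from $\{u_\ep\}$ satisfying \eqref{enboundVECCHIO} (resp.\ with $\|Ju_\ep-\pi\mu\|_{\flt,\Omega}\to 0$), I apply Proposition~\ref{cor:21041335} at each fixed $\ep$ to produce a sequence $\{u_{\ep,n}\}_n\subset SBV^2(\Omega;\Su)$ with essentially closed jump set, converging to $u_\ep$ in $BV$-norm and satisfying $\Huno(S_{u_{\ep,n}})\to \Huno(S_{u_\ep})$, $\|\nabla u_{\ep,n}\|_{L^2}\to \|\nabla u_\ep\|_{L^2}$, and $\|Ju_{\ep,n}-Ju_\ep\|_{\flt,\Omega}\to 0$. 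Choosing $n(\ep)$ large enough and setting $\tilde u_\ep := u_{\ep,n(\ep)}$, I obtain
\begin{equation*}
\mathcal G_\ep(\tilde u_\ep)=\mathcal F_\ep(\tilde u_\ep)\leq \mathcal F_\ep(u_\ep)+\ep, \qquad \|J\tilde u_\ep - Ju_\ep\|_{\flt,\Omega}\leq \ep.
\end{equation*}
Thus $\mathcal G_\ep(\tilde u_\ep)/|\log\ep|$ is still bounded (resp.\ has the same liminf as $\mathcal F_\ep(u_\ep)/|\log\ep|$), and the compactness (resp.\ liminf) part of \cite[Theorem~3.1]{DLSVG} applied to $\{\tilde u_\ep\}$ yields $\mu\in X(\Omega)$ with $\supp\mu\subset \overline{\Omega'}$ and $\|J\tilde u_\ep-\pi\mu\|_{\flt,\Omega}\to 0$ along a subsequence; the triangle inequality combined with the bound on $\|J\tilde u_\ep - Ju_\ep\|_{\flt,\Omega}$ then transfers the flat convergence to $\{Ju_\ep\}$, and in (ii) the liminf inequality of \cite{DLSVG} yields \eqref{liminfVECCHIO}.

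The main obstacle I expect is preserving the admissibility constraint $\overline{S}_{\tilde u_\ep}\subset \overline{\Omega'}$ through the density procedure. The construction underlying Proposition~\ref{cor:21041335} (via Theorem~\ref{thm:densitySBVaux}) is localized in a Vitali covering of $S_{u_\ep}$ by small cubes centered on the jump set, and the approximants agree with the original map outside a small neighborhood of these cubes; hence the new jump set lies in any prescribed open neighborhood of $\overline{S}_{u_\ep}\subset \overline{\Omega'}$, but may fail to lie strictly in $\overline{\Omega'}$. To guarantee strict admissibility, I would apply the density result on an auxiliary set $\Omega''$ with $\Omega'\subset\subset \Omega''\subset\subset \Omega$ and then glue the approximant with $u_\ep\in H^1(\Omega\sm\overline{\Omega'};\Su)$ across a smooth interface inside $\Omega''\sm\overline{\Omega'}$, where $u_\ep$ has no jump; for the enlarged pair $(\Omega,\Omega'')$ the above argument applies verbatim. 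The sharper support property $\supp\mu\subset \overline{\Omega'}$ (rather than just $\overline{\Omega''}$) is recovered from $\supp Ju_\ep\subset \overline{\Omega'}$, cf.\ \eqref{supporto_Ju}, together with the fact that flat convergence preserves containment of supports in fixed compact sets.
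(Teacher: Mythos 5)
Your proposal is correct and essentially reproduces the paper's argument: (iii) is inherited from the $\mathcal G_\ep$ recovery sequence of \cite[Theorem~3.1]{DLSVG} since $\mathcal F_\ep\leq\mathcal G_\ep$, while (i)--(ii) are obtained exactly as you describe by replacing $u_\ep$ with a closed-jump approximant $\widehat u_\ep$ from Proposition~\ref{cor:21041335}, chosen so that the Dirichlet energy, the jump length, and the flat distance of the Jacobians are all within $\ep$, and then invoking the compactness/liminf statements of \cite[Theorem~3.1]{DLSVG} for $\mathcal G_\ep$. Your more explicit discussion of the admissibility constraint $\overline{S}_{\widehat u_\ep}\subset\overline{\Omega'}$ (via an enlarged domain $\Omega''$ and the support identity \eqref{supporto_Ju}) just makes precise what the paper compresses into the phrase ``applied to the domain $\Omega'$''.
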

Actually, by arguing as above and going through the proof of \cite[Theorem 3.1]{DLSVG}, one can prove the following more general result.
\begin{theorem}\label{mainthmVECCHIOquasi}
	Let $\Om$ and $\Om'$ be as above; and let $\{E_\ep\}_\ep\subset(0,+\infty)$ with $c|\log\ep|\le E_\ep\ll |\log\ep|^2$ for some constant $c>0$ (independent of $\ep$). Then the following $\Gamma$-convergence result holds true.
	\begin{itemize}
		\item[(i)] (Compactness) Let $\{u_\ep\}_{\ep}\subset {SBV^2(\Omega;\Ss^1)}$ be such that 
		\begin{equation*}
			\sup_{\ep>0}\frac{ \mathcal F_\ep(u_\varepsilon)}{E_\ep}\le C,
		\end{equation*}
		for some $C>0$. Then there exists $\mu\in X(\Omega)$ with $\supp\mu\subseteq\overline\Om'$ such that, up to a subsequence, $\|\frac{|\log\ep|}{E_\ep}Ju_\ep-\pi\mu\|_{\flt,\Omega}\to 0$ (as $\ep\to 0$).
		\item[(ii)] ($\Gamma$-liminf inequality) For every $\mu\in X(\Omega)$ with $\supp\mu\subseteq\overline\Om'$ and for every $\{u_\ep\}_{\ep}\subset {SBV^2(\Omega;\Ss^1)}$ such that $\|\frac{|\log\ep|}{E_\ep}Ju_\ep-\pi\mu\|_{\flt,\Omega}\to 0$ (as $\ep\to 0$)\,, it holds
		\begin{equation}\label{liminfVECCHIOquasi}
			\pi|\mu|(\Omega)\le \liminf_{\ep\to 0}\frac{ \mathcal F_\ep(u_\varepsilon)}{E_\ep}\,.
		\end{equation}
		\item[(iii)] ($\Gamma$-limsup inequality) For every $\mu\in X(\Omega)$ with $\supp\mu\subseteq\overline\Om'$, there exists  $\{u_\ep\}_{\ep}\subset  {SBV^2(\Omega;\Ss^1)}$ with $\|\frac{|\log\ep|}{E_\ep}Ju_\ep-\pi\mu\|_{\flt,\Omega}\to 0$ (as $\ep\to 0$)\,, such that
		\begin{equation}\label{eq:limsupVECCHIOquasi}
			\pi|\mu|(\Omega)\geq\limsup_{\ep\to 0}\frac{ \mathcal F_\ep(u_\varepsilon)}{E_\ep}\,.
		\end{equation}
	\end{itemize}
\end{theorem}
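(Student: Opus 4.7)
The strategy is to reproduce the proof of Theorem \ref{mainthmVECCHIO} (the case $E_\ep=|\log\ep|$) with two modifications: rescale the Jacobians by the factor $|\log\ep|/E_\ep$ to accommodate the enlarged regime, and invoke the density result Proposition \ref{cor:21041335} to reduce $\mathcal F_\ep$ (general jump set) to $\mathcal G_\ep$ (closed jump set), so that \cite[Theorem 3.1]{DLSVG} and the ball-construction machinery of \cite{DLSVG} can be used off the shelf. The key quantitative input is the strict subcriticality $E_\ep\ll|\log\ep|^2$, which forces $E_\ep/|\log\ep|\ll|\log\ep|$ and hence keeps each elementary singularity ``expensive enough'' for the asymptotic picture to remain atomic.

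\emph{Compactness and $\Gamma$-liminf.} Let $\{u_\ep\}$ satisfy $\mathcal F_\ep(u_\ep)/E_\ep\le C$; since $\mathcal F_\ep(u_\ep)\le C E_\ep\ll|\log\ep|^2$ we are below the critical threshold, so the refined ball construction of \cite{DLSVG} produces disjoint closed balls $\{\overline B^n_\ep\}\subset\Omega'$ with integer degrees $z^n_\ep:=\deg(u_\ep,\partial B^n_\ep)$ such that $\mu_\ep:=\sum_n z^n_\ep\delta_{x^n_\ep}\in X(\Omega)$ obeys
\[
\|Ju_\ep-\pi\mu_\ep\|_{\flt,\Omega}\to 0,\qquad \mathcal F_\ep(u_\ep)\ge \pi|\mu_\ep|(\Omega)\,|\log\ep|\,(1-o(1)).
\]
Setting $\tilde\mu_\ep:=\tfrac{|\log\ep|}{E_\ep}\mu_\ep$, the total variation $|\tilde\mu_\ep|(\Omega)$ is uniformly bounded in $\ep$. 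A second application of the ball construction at the mesoscopic scale $E_\ep/|\log\ep|$, as in the proof of \cite[Theorem 3.1]{DLSVG}, consolidates the $x^n_\ep$'s into finitely many clusters whose total weights are integers, so that along a subsequence $\tilde\mu_\ep$ flat-converges to some $\mu\in X(\Omega)$ with $\supp\mu\subseteq\overline{\Omega'}$ and $\tfrac{|\log\ep|}{E_\ep}Ju_\ep\to\pi\mu$ in flat norm. Dividing the above lower bound by $E_\ep$ and using lower semicontinuity of the total variation under flat convergence to atomic limits yields \eqref{liminfVECCHIOquasi}.

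\emph{$\Gamma$-limsup.} Given $\mu=\sum_{n=1}^N z^n\delta_{x^n}\in X(\Omega)$ with $\supp\mu\subseteq\overline{\Omega'}$, one distributes at each $x^n$ a number $N^n_\ep:=\lfloor |z^n|E_\ep/(\pi|\log\ep|)\rfloor$ of elementary singularities of sign $\mathrm{sgn}(z^n)$ on a regular grid of spacing $\delta_\ep$ inside a disk $B_{r_\ep}(x^n)$, with $\ep\ll\delta_\ep\ll r_\ep\to 0$. Around each elementary singularity one uses the canonical vortex profile $e^{\imath\theta}$ outside a radial jump segment of length $\ep$ which supports the topological charge (so that the approximant belongs to $\mathcal{AD}(\Omega,\Omega')$). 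Standard computations parallel to the recovery sequence for the core-radius approach in Section \ref{cra_section}, combined with Proposition \ref{cor:21041335} to pass from $\mathcal G_\ep$-type recoveries back to $\mathcal F_\ep$, give
\[
\mathcal F_\ep(u_\ep)=\pi|\mu|(\Omega)\,E_\ep\,(1+o(1)),\qquad \tfrac{|\log\ep|}{E_\ep}Ju_\ep\to\pi\mu\ \text{in flat norm},
\]
which is \eqref{eq:limsupVECCHIOquasi}.

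The main obstacle lies in the compactness step: when $E_\ep/|\log\ep|\to\infty$ the number of atoms of $\mu_\ep$ diverges and their flat limit could a priori be diffuse. The strict subcriticality $E_\ep\ll|\log\ep|^2$ is precisely what allows the second ball construction at the mesoscopic scale to terminate with finitely many degree-carrying clusters of integer total weight, producing a genuine element of $X(\Omega)$. Outside this consolidation, every other step is a transparent rescaling of the $|\log\ep|$-regime argument.
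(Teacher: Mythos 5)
Your density-reduction step is exactly what the paper does: apply Proposition~\ref{cor:21041335} to produce $\widehat u_\ep$ with closed jump set and nearly identical energy/Jacobian, thereby replacing $\mathcal F_\ep$ by $\mathcal G_\ep$, as in the short proof of Theorem~\ref{mainthmVECCHIO}. Beyond that, the paper gives no proof — it simply states that the $\mathcal G_\ep$-version ``has the same proof'' as \cite[Theorem~3.1]{DLSVG} — so what you are comparing against is essentially a citation. Your attempt to actually fill in the argument is legitimate, but it introduces a genuine gap.

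The problematic step is the compactness: you claim that ``a second application of the ball construction at the mesoscopic scale $E_\ep/|\log\ep|$ ... consolidates the $x^n_\ep$'s into finitely many clusters whose total weights are integers'', and that subcriticality $E_\ep\ll|\log\ep|^2$ is what makes this consolidation terminate. The ball construction (Proposition~\ref{ballconstr}) is a device for bounding elastic energy from below in annuli; it does not provide a clustering/quantization mechanism. Concretely, when $E_\ep/|\log\ep|\to\infty$ a configuration of $K_\ep\approx E_\ep/|\log\ep|$ degree-$(+1)$ vortices spread uniformly in $\Omega'$, each with a cut of length $\sim\ep$, has
\[
\mathcal F_\ep(u_\ep)\ \lesssim\ \pi K_\ep|\log\ep| + c\,K_\ep^2 + K_\ep \ = \ O(E_\ep),
\]
since $K_\ep^2=(E_\ep/|\log\ep|)^2\ll E_\ep$ (precisely because $E_\ep\ll|\log\ep|^2$) and the jump cost is $\sim K_\ep\ll E_\ep$. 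For such a sequence $\tfrac{|\log\ep|}{E_\ep}Ju_\ep$ flat-converges to a diffuse measure, not to an element of $X(\Omega)$. Even granting some concentration, the rescaled cluster weights $\tfrac{|\log\ep|}{E_\ep}d_m$, with $d_m\in\mathbb Z$, have no reason to converge to integers since the scaling factor $\tfrac{|\log\ep|}{E_\ep}\to 0$. You assert the result is ``as in the proof of \cite[Theorem 3.1]{DLSVG}'', but that theorem concerns the $|\log\ep|$ regime, where $K_\ep$ stays bounded and the quantization is automatic; in the present regime the finite-atomic integer-weight conclusion needs an argument that neither you nor the paper supplies.

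A minor computational slip in the $\Gamma$-limsup: with $N^n_\ep=\lfloor|z^n|E_\ep/(\pi|\log\ep|)\rfloor$ you would get $\tfrac{|\log\ep|}{E_\ep}Ju_\ep\to\mu$ and $\mathcal F_\ep(u_\ep)/E_\ep\to|\mu|(\Omega)/\pi$, not $\pi\mu$ and $\pi|\mu|(\Omega)$; the correct count is $N^n_\ep=\lfloor|z^n|E_\ep/|\log\ep|\rfloor$.
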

\begin{proof}[Proof of Theorem \ref{mainthmVECCHIO}]
Although the argument is standard, we briefly discuss how to prove points (i) and (ii), (iii) being identical to the case of \cite{DLSVG}. Assume \eqref{enboundVECCHIO}; by Proposition \ref{cor:21041335} (applied to the domain $\Om'$), for all $\ep>0$ we choose $\widehat u_\ep$ such that 
\begin{align}\label{conditions}
&\int_\Om \frac12|\nabla \widehat u_\ep|^2 dx\leq \int_\Om \frac12|\nabla u_\ep|^2 d x+\ep,\nonumber\\
&\mathcal H^1(\overline S_{\widehat u_\ep})=\mathcal H^1( S_{\widehat u_\ep})\leq \mathcal H^1( S_{ u_\ep})+\ep,\nonumber\\
&\|J\widehat u_\ep-J u_\ep\|_{{\rm flat}, \Om}\leq \ep,
\end{align} 
so that it follows 
$$	\sup_{\ep>0}\frac{ \mathcal G_\ep(\widehat u_\varepsilon)}{|\log\ep|}\le C+1. $$
The compactness result in \cite[Theorem 3.1 (i)]{DLSVG} and the third condition in \eqref{conditions} imply (i). In a similar way also (ii) is a consequence of \cite[Theorem 3.1 (ii)]{DLSVG} and of the same density result.
\end{proof}

We do not discuss the proof of Theorem \ref{mainthmVECCHIOquasi} since it follows from the same result with $\mathcal G_\ep$ in place of $\mathcal F_\ep$, which in turn has the same proof of \cite[Theorem 3.1]{DLSVG}.

\subsection{$\Gamma$-convergence in the critical and supercritical regimes}
Our main results are the following.
\begin{theorem}\label{mainthm}
	The following $\Gamma$-convergence result holds true.
	\begin{itemize}
		\item[(i)] (Compactness) Let $\{u_\ep\}_{\ep}\subset {SBV^2(\Omega;\Ss^1)}$ be such that 
		\begin{equation}\label{enbound}
			\sup_{\ep>0}\frac{\F_\ep(u_\ep)}{|\log\ep|^2}\le C,
		\end{equation}
		for some $C>0$\,. Then there exist a measure $\mu\in\M(\Omega)\cap H^{-1}(\Omega)$ with $\supp\mu\subseteq\overline\Om'$ and a map $\Td\in L^2(\Omega;\R^2)$ with $-\mathrm{Div}\,\Td=\pi\mu$ such that, up to a subsequence, 
		\begin{align}\label{compj}\tag{{FJ}}
			\big\|\frac{Ju_\ep}{\pi|\log\ep|}-\mu\big\|_{\flt,\Omega}\to 0\\ \label{compac}\tag{{ACJ}}
			\frac{T^{D}_{u_\ep}}{|\log\ep|}\weakly \Td\textrm{ in }L^2(\Omega;\R^2)\,.
		\end{align}
		
		\item[(ii)] ($\Gamma$-liminf inequality) For every $(\mu,\Td)\in \big(\M(\Omega)\cap H^{-1}(\Omega)\big)\times L^2(\Omega;\R^{2})$ as in (i) and for every $\{u_\ep\}_{\ep}\subset {SBV^2(\Omega;\Ss^1)}$ satisfying \eqref{compj} and \eqref{compac}, it holds
		\begin{equation}\label{liminf}
			\pi|\mu|(\Omega)+	2\int_{\Omega}|\Td|^2\ud x\le \liminf_{\ep\to 0}\frac{\F_\ep(u_\ep)}{|\log\ep|^2}\,.
		\end{equation}
		\item[(iii)] ($\Gamma$-limsup inequality) For every 
		$(\mu,\Td)\in (\M(\Omega)\cap H^{-1}(\Omega))\times L^2(\Omega;\R^{2})$ as in (i)  there exists $\{u_\ep\}_{\ep}\subset {SBV^2(\Omega;\Ss^1)}$ satisfying \eqref{compj} and \eqref{compac},
		such that
		\begin{equation}\label{eq:limsup}
			\pi|\mu|(\Omega)+2\int_{\Omega}|\Td|^2\ud x\geq\limsup_{\ep\to 0}\frac{\F_\ep(u_\ep)}{|\log\ep|^2}\,.
		\end{equation}
	\end{itemize}
\end{theorem}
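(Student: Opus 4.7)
The plan is to reduce the analysis from $\F_\ep$ to the closed-jump variant $\mathcal{G}_\ep$ via Proposition~\ref{cor:21041335}, and then to combine the refined ball construction of \cite{DLSVG} with the core-radius analysis recalled in Section~\ref{cra_section}. Concretely, for $\{u_\ep\}$ satisfying \eqref{enbound}, Proposition~\ref{cor:21041335} produces $\widehat u_\ep\in\mathcal{AD}(\Omega,\Omega')$ with essentially closed jump set, $\mathcal{G}_\ep(\widehat u_\ep)\le \F_\ep(u_\ep)+o(1)$, $\|J\widehat u_\ep-Ju_\ep\|_{\flt,\Omega}\to 0$, and $|\Td_{\widehat u_\ep}-\Td_{u_\ep}|(\Omega)\to 0$. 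Hence it suffices to prove items (i) and (ii) for the sequence $\widehat u_\ep$, and item (iii) can be obtained by a construction valued in $\mathcal{AD}(\Omega,\Omega')$.

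For the compactness (i), the pointwise constraint $|\widehat u_\ep|=1$ a.e.\ implies the identity $\tfrac12|\nabla\widehat u_\ep|^2 = 2|\Td_{\widehat u_\ep}|^2$, so \eqref{enbound} makes $\Td_{\widehat u_\ep}/|\log\ep|$ bounded in $L^2(\Omega;\R^2)$; extract a weak limit $\Td$. The refined ball construction of \cite{DLSVG} applied to $\widehat u_\ep$ provides atomic measures $\mu_\ep=\sum_i d_i^\ep\delta_{x_i^\ep}$ with $|\mu_\ep|(\Omega)\le C|\log\ep|$ and $\|J\widehat u_\ep-\pi\mu_\ep\|_{\flt,\Omega}=o(|\log\ep|)$, so a subsequence of $\mu_\ep/|\log\ep|$ converges in flat norm to some $\mu$, yielding \eqref{compj}. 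Dividing the identity $Ju_\ep=-\Div\Td_{u_\ep}-\Div T^S_{u_\ep}$ by $|\log\ep|$ and passing to the limit in distributions, noting that $|T^S_{\widehat u_\ep}|(\Omega)\le \mathcal H^1(S_{\widehat u_\ep})\le \ep\,\mathcal{G}_\ep(\widehat u_\ep)\le C\ep|\log\ep|^2\to 0$ makes the jump-part negligible, we identify $-\Div\Td=\pi\mu$, so $\mu\in H^{-1}(\Omega)$; finally $\supp\mu\subseteq\overline{\Omega'}$ follows from \eqref{supporto_Ju}.

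For the $\Gamma$-liminf (ii), the refined ball construction yields disjoint balls $\{B_i^\ep\}$ centered at the singularities with vanishing total area, such that the CR-type lower bound applied in $\bigcup_i B_i^\ep$ gives
\begin{equation*}
\tfrac12\int_{\bigcup_i B_i^\ep}|\nabla\widehat u_\ep|^2\ud x + \tfrac{1}{\ep}\mathcal H^1(S_{\widehat u_\ep}) \geq \pi|\mu_\ep|(\Omega)|\log\ep|+o(|\log\ep|^2).
\end{equation*}
Dividing by $|\log\ep|^2$ and using flat lower semicontinuity (as in Theorem~\ref{mainthmVECCHIO}(ii)) delivers the core contribution $\pi|\mu|(\Omega)$. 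On the complement $\Omega\setminus\bigcup_i B_i^\ep$, use again $\tfrac12|\nabla\widehat u_\ep|^2 = 2|\Td_{\widehat u_\ep}|^2$ together with weak $L^2$ lower semicontinuity, noting that $\chi_{\Omega\setminus\bigcup_i B_i^\ep}\to 1$ in $L^p$ for every $p<\infty$, to obtain the bulk contribution $2\int_\Omega|\Td|^2\ud x$; summing the two bounds proves \eqref{liminf}.

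For the $\Gamma$-limsup (iii), given $(\mu,\Td)$ with $-\Div\Td=\pi\mu$, a Hodge-type decomposition writes $\Td=\pi\nabla^\perp w$ for a suitable lifting $w$; discretize $\mu$ by atomic measures $\mu^{n,\ep}$ concentrated on $\sim|\log\ep|$ well-separated unit-degree vortices with $\mu^{n,\ep}/|\log\ep|\to\mu$ both in flat norm and in total variation; then build $\widehat u_\ep^n$ by superposing standard radial vortex profiles (with core size $\ep$) at each selected point with the smooth outer field induced by $w$, as in the strategy of \cite{AP,JS2,SS3,SS4} adapted to Section~\ref{cra_section}, and conclude by a diagonal argument. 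The main obstacle is the liminf: achieving the simultaneous splitting of the energy into a core contribution (yielding $\pi|\mu|(\Omega)$) and a bulk contribution (yielding $2\int|\Td|^2\ud x$) requires the refined ball construction to capture essentially the entire core energy in small balls while preserving enough bulk information outside for the weak $L^2$ Fatou step; this is the delicate point that sets the critical regime apart from the subcritical one already handled in Theorem~\ref{mainthmVECCHIO}.
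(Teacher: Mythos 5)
Your proposal follows essentially the same strategy as the paper: reduce $\F_\ep$ to $\mathcal G_\ep$ via Proposition~\ref{cor:21041335}, use the covering-and-ball-construction machinery (formalized here in Theorem~\ref{alipons_crit}, which implements the split of Proposition~\ref{ballconstr} between a "core" annular region and a "bulk" region) to obtain both the compactness and the lower bound, and build the recovery sequence by discretizing $\mu$ into $\sim |\log\ep|$ well-separated unit vortices whose superposed profile is corrected by the outer field determined by $\Td$. The one place where your sketch is imprecise is the "Hodge-type" identity $\Td=\pi\nabla^\perp w$: since $\Div \Td=-\pi\mu\neq 0$, this cannot hold for a single-valued $w$; the paper instead works with the field $\beta:=-\Td^\perp/\pi$ (so $\mathrm{curl}\,\beta=\mu$) and introduces the primitive $\overline\vartheta_\ep$ only at the level of the discrete, corrected field $\overline\beta_\ep$ away from cores and cuts, which is what makes the construction $u_\ep=e^{2\pi\imath\vartheta_\ep}$ well-defined.
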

\begin{theorem}\label{mainthm_super}
	Let $\{N_\ep\}_{\ep>0}$ be such that $|\log\ep|\ll N_\ep\ll\ep^{-1}$. 
	The following $\Gamma$-convergence result holds true.
	\begin{itemize}
		\item[(i)] (Compactness) Let $\{u_\ep\}_{\ep}\subset {SBV^2(\Omega;\Ss^1)}$ be such that 
		\begin{equation}\label{enboundsuper}
			\sup_{\ep>0}\frac{\F_\ep(u_\ep)}{N^2_\ep}\le C,
		\end{equation}
		for some $C>0$\,. Then there exist a field $\Td\in L^2(\Omega;\R^2)$ such that, up to a subsequence, $\frac{\Td_{u_\ep}}{N_\ep}\weakly \Td$ in $L^2(\Omega;\R^2)$\,.
		\item[(ii)] ($\Gamma$-liminf inequality) For every $\Td\in L^2(\Omega;\R^{2})$ and for every $\{u_\ep\}_{\ep}\subset {SBV^2(\Omega;\Ss^1)}$ with $\frac{\Td_{u_\ep}}{N_\ep}\weakly \Td$ in $L^2(\Omega;\R^2)$\,, it holds
		\begin{equation}\label{liminfsuper}
			2\int_{\Omega}|\Td|^2\ud x\le \liminf_{\ep\to 0}\frac{\F_\ep(u_\ep)}{N^2_\ep}\,.
		\end{equation}
		\item[(iii)] ($\Gamma$-limsup inequality) For every 
		$\Td \in L^2(\Omega;\R^{2})$ there exists $\{u_\ep\}_{\ep}\subset {SBV^2(\Omega;\Ss^1)}$  with $\frac{\Td_{u_\ep}}{N_\ep}\weakly \Td$ in $L^2(\Omega;\R^2)$
		such that
		\begin{equation}\label{eq:limsupsuper}
			2\int_{\Omega}|\Td|^2\ud x\geq\limsup_{\ep\to 0}\frac{\F_\ep(u_\ep)}{N_\ep^2}\,.
		\end{equation}
	\end{itemize}
\end{theorem}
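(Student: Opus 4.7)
The key observation for parts (i) and (ii) is the pointwise identity
\begin{equation*}
|\Td_u|^2 \;=\; \tfrac14 |\nabla u|^2 \qquad \Ld\text{-a.e.\ in }\Omega,
\end{equation*}
valid for every $u \in SBV^2(\Omega;\Su)$. Indeed, locally writing $u=(\cos\theta,\sin\theta)$ a direct computation from \eqref{unocorr} gives $\Td_u = \tfrac12 \nabla^\perp \theta$ and $|\nabla u|^2 = |\nabla \theta|^2$; equivalently this follows from \eqref{perlift} with $u = e^{2\pi\imath w}$ and $|\nabla u|^2 = 4\pi^2 |\nabla w|^2$. Consequently
\begin{equation*}
\F_\ep(u) \;\geq\; \tfrac12 \int_{\Omega} |\nabla u|^2 \ud x \;=\; 2 \int_{\Omega} |\Td_u|^2 \ud x.
\end{equation*}
For (i) this gives $\|\Td_{u_\ep}/N_\ep\|_{L^2(\Omega)}^2 \leq \F_\ep(u_\ep)/N_\ep^2 \leq C$, whence weak $L^2$ compactness produces the desired subsequential limit $\Td$. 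For (ii) the same bound yields $\F_\ep(u_\ep)/N_\ep^2 \geq 2 \int_\Omega |\Td_{u_\ep}/N_\ep|^2 \ud x$, and weak $L^2$ lower semicontinuity of the $L^2$-norm passes to the $\liminf$.

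For the $\Gamma$-limsup inequality (iii), I would first reduce, by $L^2$-density and a diagonal argument exploiting the continuity of $\Td\mapsto 2\int|\Td|^2\ud x$ under strong $L^2$-convergence, to the case in which $\Td\in C^\infty(\overline\Omega;\R^2)$ has $\mathrm{Div}\,\Td$ supported in $\Omega'$ (which is automatic in the limit, since $u_\ep\in \mathcal{AD}(\Omega,\Omega')$ is smooth off $\overline{\Omega'}$ and $\Su$-valued, hence $\mathrm{Div}\,\Td_{u_\ep}=0$ there). Setting $\mu := -\mathrm{Div}\,\Td/\pi \in C^\infty_c(\Omega')$, I would then adapt the recovery sequence of Theorem~\ref{mainthm}(iii) by distributing $M_\ep\sim N_\ep|\mu|(\Omega)$ signed unit vortices $\{(x_i^\ep,z_i^\ep)\}_{i=1}^{M_\ep}\subset \Omega'\times\{\pm 1\}$ (instead of the $\sim|\log\ep|$ vortices used in the critical regime), with $\tfrac{1}{N_\ep}\sum_i z_i^\ep \delta_{x_i^\ep}$ approximating $\mu$ in flat norm. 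Following the construction of \cite{DLSVG} and Section~\ref{cra_section}, I would take a multi-valued phase $w_\ep$ on $\Omega\setminus\bigcup_i \overline B_\ep(x_i^\ep)$ satisfying $\pi\nabla^\perp w_\ep = N_\ep\Td$ outside the cores up to a divergence-free correction encoding the degrees, paste SBV-compatible radial sector profiles with integer jumps of length $\sim\ep$ inside each core, and set $u_\ep := e^{2\pi\imath w_\ep}$.

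For this ansatz the elastic contribution is
\begin{equation*}
\tfrac12 \int_\Omega |\nabla u_\ep|^2 \ud x \;=\; 2\pi^2 \int_\Omega |\nabla w_\ep|^2 \ud x \;=\; 2 N_\ep^2 \int_\Omega |\Td|^2 \ud x + O(M_\ep |\log \ep|),
\end{equation*}
where the error aggregates the vortex self-energies; since $M_\ep\sim N_\ep$ and $|\log\ep|\ll N_\ep$, this error is $o(N_\ep^2)$. Similarly, the surface cost is $\tfrac1\ep \Huno(S_{u_\ep}) \sim M_\ep \sim N_\ep = o(N_\ep^2)$, so $\limsup \F_\ep(u_\ep)/N_\ep^2 \leq 2\int|\Td|^2\ud x$. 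The weak $L^2$-convergence $\Td_{u_\ep}/N_\ep \weakly \Td$ follows from the identification $\Td_{u_\ep} = \pi\nabla^\perp w_\ep$ a.e.\ outside the cores (whose total area vanishes as $\ep\to 0$) together with the equidistribution of the signed vortices. The main obstacle is the sharp control of the vortex self-energy error term, which is precisely where the hypothesis $|\log\ep|\ll N_\ep$ is essential and where the refined ball-construction machinery of \cite{DLSVG} (together with the core-radius analysis of Section~\ref{cra_section}) is required.
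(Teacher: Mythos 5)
Your proposal matches the paper's strategy for all three parts: (i) and (ii) rest exactly on the pointwise identity $|\Td_u|=\tfrac12|\nabla u|$ coming from \eqref{perlift}, together with weak $L^2$-compactness and lower semicontinuity, while (iii) reuses the vortex-distribution recovery sequence of Theorem~\ref{mainthm}(iii), the only new ingredient being that with $|\log\ep|\ll N_\ep$ the aggregate core energy $O(N_\ep|\log\ep|)$ and the jump cost $O(N_\ep)$ are both $o(N_\ep^2)$. One small inaccuracy in your closing remark: the refined ball construction is lower-bound machinery (Theorem~\ref{alipons_crit} and Section~\ref{cra_section}); it is not needed for the limsup in (iii), where the explicit recovery sequence yields the self-energy estimate directly.
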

By using the density result in Proposition \ref{cor:21041335}, one can show that Theorems \ref{mainthm} and \ref{mainthm_super} hold true also when replacing $\F_\ep$ with  $\mathcal G_\ep$.

In order to prove Theorems \ref{mainthm} and \ref{mainthm_super}, we will make use of the corresponding core radius approach results that for the sake of completeness we state and prove in Section \ref{cra_section} below.
\section{Core radius approach}\label{cra_section}
We first introduce some notation. 
Let $V\subset\R^2$ be a bounded and open set with Lipschitz continuous boundary.
For every finite family of pairwise (essentially\footnote{That is, whose closures are mutually disjoint.}) disjoint open balls $\B:=\{B^n\}_{n=1,\ldots,N}$ (with $N\in\N$)
we set 
$$V(\B):=V\setminus \bigcup_{n=1}^N\overline{B}^n,$$ and we denote by $\rad(\B)$ the sum of the radii of the balls $B^n$, namely
$$\rad(\B):=\sum_{n=1}^Nr(B^n)\,,$$
where $r(B)$ denotes the radius of the ball $B$\,.
Moreover, for every $\mu\in X(V)$ with $\mu\neq 0$ of the form
\begin{align}\label{mu_B}
	\mu:=\sum_{n=1}^Nz^n\delta_{x(B^n)}\qquad \text{ with }z^n\in{\Z}\setminus\{0\}\,,
\end{align}
we set
\begin{equation}\label{admi}
	\Ad(\B,\mu,V):=\{u\in H^1(V(\B);\Ss^1)\,:\, \deg(u,\partial B^n)=z^n\textrm{ for every }n=1,\ldots,N\}\,.
\end{equation}
Here and below, $x(B)$ denotes the center of the ball $B$\,.

\begin{definition}[Merging procedure]\label{merging_def}
	\rm{Given a finite family $\B=\{B_{r^i}(x^i)\}_{i=1,\ldots,I}$ ($I\in\N$) of balls in $\R^2$, we define a new family $\widehat{\B}$ as follows. If the closures of two balls in $\B$ are not disjoint, then we replace the two balls with a unique ball which contains both of them and has radius less than or equal to the sum of the radii of the original balls. After this, we repeat this replacement recursively, until as all the balls in the family are mutually essentially disjoint. The final family is $\widehat{\B}$.
		The procedure of passing from $\B$ to $\widehat{\B}$ is called {merging procedure} applied to $\B$. Notice that a merging procedure does not increase the sum of all the radii of the balls in the family.
	}
\end{definition}

The following result is proven in \cite[Proposition 2.2]{DLP}.
\begin{proposition}\label{ballconstr}
	{Let $V\subset\R^2$ be a bounded and open set with Lipschitz continuous boundary, let $\B$ be a finite family of pairwise essentially disjoint balls in $\R^2$, let $\mu\in X(V)$ be of the form \eqref{mu_B}, and let $u\in \Ad(\B,\mu,V)$\,.}
	Then, there exists a one-parameter family of open balls $\B(t)$ with $t\ge 0$  such that, setting $U(t):=\bigcup_{B\in\B(t)} B$, the following properties hold true:
	\begin{enumerate}
		\item $\B(0)=\B \, $;
		\item $ U(t_1)\subset U(t_2)$  for any $0\le t_1<t_2 \, $;
		\item the balls in $\B(t)$ are pairwise (essentially) disjoint {for every $t>0$};
		\item for any $0\le t_1<t_2$ and for any open set $A \subseteq V$\,, 
		\begin{equation*}
			\frac 1 2\int_{(U(t_2)\setminus \overline U(t_1))\cap A}|\nabla u|^2\ud x\ge\pi\sum_{\newatop{B\in\B(t_2)}{B\subseteq A}}|\mu(B)|\log\frac{1+t_2}{1+t_1} \, ;
		\end{equation*} 
		\item  {for every $t>0$:} $\displaystyle \sum_{B\in \B(t)}r(B)\le(1+t)\sum_{B\in \B}r(B)$, where $r(B)$ denotes the radius of  $B$\,. 
	\end{enumerate}
\end{proposition}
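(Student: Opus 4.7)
My plan is to construct $\B(t)$ through an expand-and-merge procedure and then verify the five properties. The construction proceeds as follows: set $\B(0):=\B$ and, for $t\ge 0$, let each current ball $B_r(x)$ evolve into $B_{r(1+t)}(x)$; equivalently, if the family is $\{B_{r^i}(x^i)\}_i$ at parameter $t_0$ and no merging occurs in $[t_0,t_1]$, then $\B(t)=\{B_{r^i(1+t)/(1+t_0)}(x^i)\}_i$ on that interval. At the critical times $0<\tau_1<\tau_2<\dots<\tau_K$ at which the closures of two (or more) balls in the expanded family first meet, apply the merging procedure of Definition~\ref{merging_def} to obtain $\B(\tau_k)$, and then continue to expand. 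The process terminates in finitely many merging events since $\#\B$ strictly decreases at each event; between events, $\B(t)$ is obtained from $\B(\tau_k)$ by pure dilation. Properties (1)--(3) then hold by construction: (1) by definition, (2) because dilation is monotone and merging replaces balls with a container, and (3) by the merging procedure applied at each $\tau_k$.

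For property (5), observe that a pure dilation on $[t_0,t_1]$ (without merging) multiplies the sum of radii by $(1+t_1)/(1+t_0)$, while a merging does not increase the sum of radii (by Definition~\ref{merging_def}). A telescoping argument along $0<\tau_1<\dots<\tau_K\le t$ then yields $\sum_{B\in\B(t)} r(B)\le (1+t)\sum_{B\in\B} r(B)$.

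The heart of the argument is property (4), which rests on the classical annular lower bound for $\Ss^1$-valued maps: if $u\in H^1(B_R(y)\sm \overline B_r(y);\Ss^1)$ with $\deg(u,\partial B_\varrho(y))=d$ for all $\varrho\in(r,R)$, then decomposing $|\nabla u|^2\ge |\partial_\tau u|^2$ in polar coordinates and using Jensen's inequality on the circle $\partial B_\varrho(y)$ together with \eqref{def_grado} gives
\begin{equation*}
\tfrac12\int_{B_R(y)\sm \overline B_r(y)}|\nabla u|^2\ud x\ge \pi|d|\log(R/r).
\end{equation*}
I would first verify property (4) on any sub-interval $[t_1,t_2]$ during which no merging happens: on such an interval each $B\in \B(t_1)=\B(t_2)$ evolves by dilation from radius $r_1$ to $r_2=r_1(1+t_2)/(1+t_1)$, and $u$ lies in $H^1$ of the annulus $B_{r_2}(x(B))\sm\overline B_{r_1}(x(B))$ with constant degree $\mu(B)$ on every concentric circle (because in $\B(t_1)$ all original singularities inside $B$ are already contained in $B$, hence $u$ is smooth on the annulus); summing over $B\subseteq A$ yields the desired estimate on $[t_1,t_2]$.

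To extend property (4) across a merging time $\tau\in(t_1,t_2)$, split the integral as $(U(t_2)\sm U(t_1))\cap A=((U(\tau)\sm U(t_1))\cup(U(t_2)\sm U(\tau)))\cap A$ and apply the no-merging estimate on each piece, obtaining
\begin{equation*}
\tfrac12\!\!\int_{(U(t_2)\sm U(t_1))\cap A}\!\!\!|\nabla u|^2\ud x\ge \pi\sum_{\newatop{B\in\B(\tau)}{B\subseteq A}}|\mu(B)|\log\tfrac{1+\tau}{1+t_1}+\pi\sum_{\newatop{B\in\B(t_2)}{B\subseteq A}}|\mu(B)|\log\tfrac{1+t_2}{1+\tau}.
\end{equation*}
Since the merging procedure groups the balls of $\B(\tau)$ into those of $\B(t_2)$ and the triangle inequality gives $|\mu(B_{\mathrm{merged}})|\le \sum|\mu(B_i)|$, one has $\sum_{B\in\B(\tau),B\subseteq A}|\mu(B)|\ge \sum_{B\in\B(t_2),B\subseteq A}|\mu(B)|$, so both terms can be bounded below using $\B(t_2)$, and the two logarithms combine to $\log\frac{1+t_2}{1+t_1}$. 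Iterating across the finitely many merging times in $[t_1,t_2]$ completes the proof.

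The main obstacle is not so much the annular estimate, which is classical, as the bookkeeping at merging times: one must check that the containment $B\subseteq A$ is compatible with the refinement $\B(\tau)\rightsquigarrow\B(t_2)$ (each ball of $\B(t_2)$ meeting $A$ comes from balls of $\B(\tau)$, some of which might not be contained in $A$), and that degrees behave additively under merging so that the bound in terms of the coarser partition $\B(t_2)$ is genuinely weaker. Both points are handled by choosing the merging procedure so that a ball of $\B(t_2)$ with $B\subseteq A$ arises from a subfamily of $\B(\tau)$ each of whose members is contained in $A$ (which is automatic if the merging replaces two essentially non-disjoint balls by their smallest enclosing ball), and by the triangle inequality on integers.
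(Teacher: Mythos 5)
The paper does not give its own proof of Proposition 4.2 --- it is imported verbatim from \cite[Proposition 2.2]{DLP}. Your proof is a correct self-contained reconstruction of the classical Sandier--Jerrard expand-and-merge ball construction, which is indeed the argument used in \cite{DLP}. Two minor remarks, neither of which affects correctness: (a) the annular estimate you state actually reads $\tfrac12\int_{B_R\sm\overline B_r}|\nabla u|^2\ge\pi|d|^2\log(R/r)$ after Jensen's inequality, and the linear bound $\pi|d|\log(R/r)$ follows only because $d\in\Z$ so $|d|^2\ge|d|$ --- worth making explicit; (b) in the display across a merging time, the first sum should be over the pre-merge family $\B(\tau^-)$ rather than $\B(\tau)$, but you then correctly pass to the post-merge family via the triangle inequality $|\mu(B_{\mathrm{merged}})|\le\sum_i|\mu(B_i)|$, together with the observation that every ancestor of a final ball $B\subseteq A$ is contained in $B$ and hence in $A$ (which holds because the merging of Definition \ref{merging_def} always replaces a cluster by a ball containing it, not merely the smallest such ball as your parenthetical suggests). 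The bookkeeping that worried you at the end is therefore exactly right.
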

For every $\B$ and $\mu$ as in Proposition \ref{ballconstr}, for every $t>0$, we set $\Ccal(t):=\{B\in\B(t)\,:\,\overline{B}\subset V\}$ and we define 
\begin{equation}\label{tilde}
	\widetilde\mu:=\sum_{B\in\Ccal(1)}\mu(B)\delta_{x(B)}\,.
\end{equation}
We can now state the crucial result which will be the starting point of the proof of Theorem \ref{mainthm}.
\begin{theorem}\label{alipons_crit}
	Let $V$ be a bounded  open set with Lipschitz boundary. For every $\ep>0$ let $\B_\ep:=\{B_\ep^n\}_{n=1,\ldots,N_\ep}$ (with $N_\ep\in\N$) be a (finite) family of pairwise (essentially) disjoint open balls with	 $\rad(\B_\ep)\to 0$ as $\ep\to 0$\,, $\mu_\ep:=\sum_{n=1}^{N_\ep}z_{\ep}^n\delta_{x(B_\ep^n)}$ with $z_\ep^n\in\Z\setminus\{0\}$ for every $n=1,\ldots,N_\ep$\,. Let moreover $\{u_\ep\}_\ep$ be such that $u_\ep\in \Ad(\B_\ep,\mu_\ep,V)$\,. 
	Assume that 
	\begin{equation}\label{enbound0_crit}
		\sup_{\ep>0}\frac{1}{2|\log\rad(\B_\ep)|^2}\int_{V(\B_\ep)}|\nabla u_\ep|^2\ud x\le C\,,
	\end{equation} 
	for some constant $C>0$ independent of $\ep$\,.
	Then, the following facts hold true.
	\begin{itemize}
		\item[(i)] Let $\widetilde\mu_\ep$ be the measures defined in \eqref{tilde} with $\Ccal(1)=\Ccal_\ep(1)=\{B\in\B_\ep(1)\,:\,\overline{B}\subset V\}$; then $|\widetilde\mu_\ep|(V)\le C|\log\rad(\B_\ep)|^2$ for all $\ep>0$ with a constant $C>0$ independent of $\ep$, and
		there exist a measure $\mu\in \M(V)$ and a function $\lambda\in L^2(V;\R^2)$
		such that,
		up to a subsequence, as $\ep\to 0$
		\begin{align}\label{cra_convmis}
			&\frac{\widetilde\mu_\ep \res V}{|\log\rad(\B_\ep)|}\fla\mu,\\ \label{cra_convcampi}
			&\frac{\lambda_{u_\ep}\chi_{V(\B_\ep)}}{|\log\rad(\B_\ep)|}\rightharpoonup \lambda\qquad \textrm{ weakly in }L^2(V;\R^2)\,;
		\end{align}
			\item[(ii)] $\pi |\mu|(V)+2\int_{V}|\lambda|^2\ud x\le \liminf_{\ep\to 0}\frac{1}{2|\log\rad(\B_\ep)|^2}\int_{V(\B_\ep)}|\nabla u_\ep|^2\ud x$\,.
		\end{itemize}
	\end{theorem}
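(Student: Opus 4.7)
The plan is to follow the classical ball construction strategy of Sandier--Jerrard, in the spirit of \cite{AP, JS2, SS3}, using Proposition~\ref{ballconstr} as the main quantitative device. The key starting observation is the pointwise identity $|\nabla u|^2=4|\lambda_u|^2$ valid for any $u\in H^1(A;\Su)$, as can be checked by writing $u=e^{\imath\phi}$ in a local lifting.

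For the compactness statement (i), the $L^2$-bound on the rescaled currents is immediate from the above identity combined with \eqref{enbound0_crit}, yielding $\|\lambda_{u_\ep}\chi_{V(\B_\ep)}/|\log\rad(\B_\ep)|\|_{L^2(V;\R^2)}\le C$ and hence a weak subsequential limit $\Td$. The bound $|\tilde\mu_\ep|(V)\le C|\log\rad(\B_\ep)|^2$ follows by applying Proposition~\ref{ballconstr}(4) with $t_1=0$, $t_2=1$, $A=V$: by definition of $\Ccal_\ep(1)$, one gets $\pi|\tilde\mu_\ep|(V)\log 2\le\tfrac12\int_{V(\B_\ep)}|\nabla u_\ep|^2\ud x$. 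To obtain flat compactness of $\tilde\mu_\ep/|\log\rad(\B_\ep)|$ and ensure $\mu\in H^{-1}(V)$, I would rely on the distributional identity $-\Div(\lambda_{u_\ep}\chi_{V(\B_\ep)})=\pi\mu_\ep$ on $V$ (which holds modulo boundary contributions on $\partial V$ and follows from the degree relation $\int_{\partial B_\ep^n}\lambda_{u_\ep}\cdot\nu=\pi z_\ep^n$); since $\lambda_{u_\ep}\chi_{V(\B_\ep)}/|\log\rad(\B_\ep)|$ is bounded in $L^2$, the rescaled atomic measures $\pi\mu_\ep/|\log\rad(\B_\ep)|$ are bounded in $H^{-1}(V)$, whence they admit a weakly (and hence flatly) convergent subsequence. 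The identity $-\Div\,\Td=\pi\mu$ is then obtained by passing to the limit in distributions.

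For the lower bound (ii), the strategy is to split the energy at a mesoscale $\eta\in(0,1)$. Fix also $\delta>0$ and work on $V_\delta:=\{x\in V\colon\dist(x,\partial V)>\delta\}$ to avoid singularities near $\partial V$. Choosing $t_\eta^\ep$ so that $(1+t_\eta^\ep)\rad(\B_\ep)=\eta$, let $U_\eta^\ep$ denote the union of the balls in $\B_\ep(t_\eta^\ep)$, whose Lebesgue measure is $\le\pi\eta^2$ by property~(5). Property~(4) with $t_1=0$, $t_2=t_\eta^\ep$, $A=V_\delta$ yields the \emph{core} contribution
\begin{equation*}
\tfrac12\int_{(U_\eta^\ep\setminus\overline{\B}_\ep)\cap V_\delta}|\nabla u_\ep|^2\ud x\ge\pi\bigl|\tilde\mu_\ep^{t_\eta,\delta}\bigr|(V_\delta)\log\frac{\eta}{\rad(\B_\ep)},
\end{equation*}
where $\tilde\mu_\ep^{t_\eta,\delta}$ is the analogue of \eqref{tilde} at time $t_\eta^\ep$ relative to $V_\delta$. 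On the complementary region $V\setminus U_\eta^\ep$ the map $u_\ep$ is $\Su$-valued and in $H^1$, so the pointwise identity above gives the \emph{extended} contribution $\tfrac12\int_{V\setminus U_\eta^\ep}|\nabla u_\ep|^2\ud x=2\int_{V\setminus U_\eta^\ep}|\lambda_{u_\ep}|^2\ud x$. Dividing by $|\log\rad(\B_\ep)|^2$ and taking the $\liminf$, I would combine: (a) the fact that $\log(\eta/\rad(\B_\ep))/|\log\rad(\B_\ep)|\to 1$ together with lower semicontinuity of the total variation under flat convergence, yielding $\pi|\mu|(V_\delta)$; (b) weak-$L^2$ lower semicontinuity together with $|U_\eta^\ep|\le\pi\eta^2$, so that $\chi_{V\setminus U_\eta^\ep}\to\chi_V$ in $L^2$ as $\eta\to 0$ uniformly in $\ep$, yielding $2\int_{V_\delta}|\lambda|^2\ud x$. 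Sending first $\eta\to 0$ and then $\delta\to 0$ concludes the argument.

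The main delicate point is the flat compactness of the atomic measures, since $|\tilde\mu_\ep|(V)/|\log\rad(\B_\ep)|$ need not be bounded: dipoles of opposite sign at short mutual distance contribute much more to the total variation than to the flat norm. The $H^{-1}$-bound inherited from $\lambda_{u_\ep}$, together with the multiscale control provided by Proposition~\ref{ballconstr}, is what reconciles the two estimates; a secondary subtlety is the treatment of singularities possibly accumulating at $\partial V$, which is addressed by localizing on $V_\delta$ and passing to the limit $\delta\to 0$ only after letting $\eta\to 0$.
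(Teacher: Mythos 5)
Your approach to the $L^2$ compactness of the rescaled currents, the total variation bound for $\widetilde\mu_\ep$, and the lower bound via a mesoscale split is in the right spirit and close to the paper's. However, the central compactness step is not correct as written, and it is precisely the point the paper's proof is designed to handle.

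The step ``since $\lambda_{u_\ep}\chi_{V(\B_\ep)}/|\log\rad(\B_\ep)|$ is bounded in $L^2$, the rescaled atomic measures $\pi\mu_\ep/|\log\rad(\B_\ep)|$ are bounded in $H^{-1}(V)$, whence they admit a weakly (and hence flatly) convergent subsequence'' does not go through. First, the distributional identity $-\mathrm{Div}(\lambda_{u_\ep}\chi_{V(\B_\ep)})=\pi\mu_\ep$ is not exact: integrating by parts on $V(\B_\ep)$ shows that $-\mathrm{Div}(\lambda_{u_\ep}\chi_{V(\B_\ep)})$ is the measure $\sum_n(\lambda_{u_\ep}\cdot\nnu)\,\Huno\res\partial B_\ep^n$, concentrated on the \emph{spheres}, not at the centers; the degree relation only controls the total integral on each sphere, not the mass $\int_{\partial B_\ep^n}|\lambda_{u_\ep}\cdot\nnu|$, so the difference with $\pi\mu_\ep$ is not obviously small in $H^{-1}$ or flat norm. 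Second, and more seriously, weak convergence in $H^{-1}(V)$ does \emph{not} imply flat convergence: the unit ball of $\Cc^{0,1}(V)$ is bounded but not precompact in $H^1_0(V)$, so weak pairing against $H^1$ test functions does not give uniform control over Lipschitz test functions. (A simple illustration in $\R^2$: $\mu_k=\partial_{x_1}(\phi(x)\sin(kx_1))$ with a fixed bump $\phi$ is bounded in $H^{-1}$ and converges weakly to $0$ in $H^{-1}$, yet $\|\mu_k\|_{\flt}$ does not tend to $0$.) So your reduction to $H^{-1}$-weak compactness does not deliver \eqref{cra_convmis}.

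The paper closes this gap differently, and the device is essential, not cosmetic. It introduces the intermediate time $t^p_\ep:=\rad(\B_\ep)^{p-1}-1$, $p\in(0,1)$, and the corresponding measure $\nu^p_\ep$ from the ball construction. The energy estimate at time $t^p_\ep$ gives $|\nu^p_\ep|(V)\le C(1-p)^{-1}|\log\rad(\B_\ep)|$, so the rescaled $\nu^p_\ep/|\log\rad(\B_\ep)|$ have uniformly bounded total variation and thus admit a weak-$*$ (hence, for uniformly bounded Radon measures, flat) subsequential limit $\mu^p$. Then one compares $\widetilde\mu_\ep$ and $\nu^p_\ep$ in flat norm: on each ball $B\in\Ccal_\ep(t^p_\ep)$ the two measures have the same total mass $\mu_\ep(B)$, so one may subtract the average of the Lipschitz test function on $B$; the oscillation is $\lesssim\mathrm{diam}(B)\lesssim\rad(\B_\ep)^p$, and summing using both total variation bounds gives $\|(\widetilde\mu_\ep-\nu^p_\ep)/|\log\rad(\B_\ep)|\|_{\flt}\lesssim\rad(\B_\ep)^p|\log\rad(\B_\ep)|\to 0$, so $\mu^p$ is independent of $p$ and equals the flat limit $\mu$ of $\widetilde\mu_\ep/|\log\rad(\B_\ep)|$. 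This is the missing ingredient in your argument, and there does not seem to be a way to shortcut it via $H^{-1}$ alone.

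Your argument for (ii) is morally the same as the paper's but uses a fixed mesoscale $\eta$ rather than $t_\ep^p$, where balls have radii $\sim\rad^p\to 0$. With a fixed $\eta$ the ball construction produces balls of radius comparable to $\eta$, so the corresponding measure at time $t_\eta^\ep$ converges (weak-$*$) to some $\mu_\eta$ which a priori is only within flat distance $O(\eta)$ of $\mu$; identifying the core contribution with $\pi|\mu|$ thus needs a further limit in $\eta$ with lower semicontinuity, which you hint at but do not carry out. The paper's parametrization by $p$ avoids this because at time $t^p_\ep$ the balls already shrink with $\ep$, so the same $\mu$ appears directly.

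Bottom line: your lower bound is close to correct modulo the last caveat, but the flat compactness in (i) has a genuine gap at exactly the point you flag as delicate, and the paper's two-scale construction via $\nu^p_\ep$ is what you need to add.
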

	Notice that, as $u_\ep\in \Ad(\B_\ep,\mu_\ep,V)$, we have $\lambda_{u_\ep}\in L^2(V(\B_\ep);\R^2)$ (see \eqref{lambda_u}). In formula \eqref{cra_convcampi}, symbol $\lambda_{u_\ep}\chi_{V(\B_\ep)}$ denotes the extension of $\lambda_{u_\ep}$ to the constant $(0,0)$ in $V\setminus \overline{V(\B_\ep)}$. 
	\begin{proof}
		We start by proving (i). Our proof closely resembles that of \cite[Theorem 3.2]{AP} where the compactness result is proven in the energy regime
		$|\log\rad(\B_\ep)|$\,. 
		
		For every $0<p<1$ and for every $\ep>0$ we set
		\begin{equation}\label{tempopi}
			t^p_\ep:=\frac{1}{\rad^{1-p}(\B_\ep)}-1\,,\qquad\qquad\nu^p_\ep:=\nu[t^{p}_\ep]\,,
		\end{equation}
		where we have set, for $t\geq0$, $$\nu[t]:=\sum_{B\in\Ccal_\ep(t)}\mu_\ep(B)\delta_{x(B)}\,.$$
		Fix $0<p<1$\,.
		Then, by applying Proposition \ref{ballconstr}(4) (with $t_1=0$ and $t_2=1$ and $t_2=t^p_\ep$) and by the energy bound \eqref{enbound0_crit}, we have that
		\begin{equation}\label{vartolim}
			|\widetilde\mu_\ep|(V)\le C|\log\rad(\B_\ep)|^2\,,\qquad\qquad |\nu^p_\ep|(V)\le {C(1-p)^{-1}}|\log\rad(\B_\ep)|\,,
		\end{equation}
		whence we deduce the first statement in claim (i) and the existence of a measure $\mu^p\in \M_b(V)$ such that  (up to a not-relabeled subsequence)
		\begin{equation}\label{convedebolemisure}
			\frac{\nu^p_\ep}{|\log\rad(\B_\ep)|}\weakstar\mu^p\qquad\textrm{as $\ep\to 0$}\,.
		\end{equation}
		Now we prove that 
		\begin{equation}\label{class}
			\frac{1}{|\log\rad(\B_\ep)|}\big(\widetilde\mu_\ep-\nu^p_\ep\big)\fla 0\qquad \textrm{for every }0<p<1\,,
		\end{equation}
	from which we deduce also that $\mu^p\equiv \mu$ for any $0<p<1$\,.
		To this purpose, we first observe that $\widetilde\mu_\ep(B)=\nu^p_\ep(B)$ for any $B\in\Ccal_\ep(t^p_\ep)=\{B\in \B_\ep(t^p_\ep):\overline B\subset V\}$\,; therefore, using \eqref{vartolim} and Proposition \ref{ballconstr}(5) together with the very definition of $t^p_\ep$\,, for every sequence $\{\ffi_\ep\}_{\ep}\subset C_{\mathrm{c}}^{0,1}(V)$ with $\|\ffi_\ep\|_{C^{0,1}}\le 1$\,, we have
		\begin{equation*}
			\begin{aligned}
				\frac{1}{|\log\rad(\B_\ep)|}\left|\big\langle\widetilde\mu_\ep-\nu^p_\ep,\ffi_\ep\big\rangle\right|\leq&\,\frac{1}{|\log\rad(\B_\ep)|}\left|\sum_{B\in\Ccal_\ep(t^p_\ep)}\int_{B}\Big(\ffi_\ep-\fint_{B}\ffi_\ep\ud x\Big)\ud (\widetilde\mu_\ep-\nu^p_\ep)\right|\\
				&\,+\frac{1}{|\log\rad(\B_\ep)|}\left|\sum_{B\in\B_\ep(t^p_\ep)\setminus\Ccal_\ep(t^p_\ep)}\int_{B\cap V}\ffi_\ep\ud (\widetilde\mu_\ep-\nu^p_\ep)\right|\\
				\le&\,\frac{1}{|\log\rad(\B_\ep)|}\sum_{B\in\B_\ep(t^p_\ep)}\int_{B}\big(\max_{B}\ffi_\ep-\min_{B}\ffi_\ep\big)\big(|\widetilde\mu_\ep|(B)+|\nu^p_\ep|(B)\big)\\
				\le&\,\frac{1}{|\log\rad(\B_\ep)|}\sum_{B\in\B_\ep(t^p_\ep)}\mathrm{diam}(B)\big(|\widetilde\mu_\ep|(V)+|\nu^p_\ep|(V)\big)\\
				\le&\,C\rad^p(\B_\ep)|\log\rad(\B_\ep)|\,,
			\end{aligned}
		\end{equation*}
		whence \eqref{class} follows.
		
		Moreover, by the very definition of $\lambda_{u_\ep}$ in \eqref{lambda_u} and by the energy bound \eqref{enbound0_crit}, we immediately have that
		\begin{equation}\label{limcorr}
			\frac 1 2\int_{V(\B_\ep)}|2\lambda_{u_\ep}|^2\ud x=\frac 1 2\int_{V(\B_\ep)}|\nabla u_\ep|^2\ud x\le C|\log\rad(\B_\ep)|^2\,,
		\end{equation}
		thus, up to extracting a further subsequence, there exists $\lambda\in L^2(V;\R^2)$ such that \eqref{cra_convcampi} holds.
		{Notice that, for $p\in(0,1)$ fixed, since $|U(t^p_\ep)|\rightarrow0$ as $\ep\rightarrow0$, we also deduce 
			\begin{equation}\label{serviva}\frac{\lambda_{ u_\ep}}{|\log\rad(\B_\ep)|}\chi_{V(\B_\ep(t^p_\ep))}\rightharpoonup\lambda \qquad \text{ weakly in }L^2(V;\R^2).
			\end{equation}}
		Now we prove (ii).
		To this end, let $p\in (0,1)$ be fixed;
		by \eqref{convedebolemisure} and by
		Proposition \ref{ballconstr}(4), we get
		\begin{equation}\label{coreen}
			\begin{aligned}
				\liminf_{\ep\to 0}\frac{1}{2|\log\rad(\B_\ep)|^2}\int_{(U(t_\ep^p)\setminus U(0))\cap V}|\nabla u_\ep|^2\ud x
				\ge&\,\pi {(1-p)}\liminf_{\ep\to 0}\frac{|\nu_\ep^p|(V)}{|\log\rad(\B_\ep)|}\\
				\ge&\,\pi {(1-p)}|\mu|(V)\,.
			\end{aligned}
		\end{equation}
		Furthermore, by \eqref{serviva}, we have that
		\begin{equation*}
			\begin{aligned}
				\liminf_{\ep\to 0}\frac{1}{2|\log\rad(\B_\ep)|^2}\int_{V(\B_\ep(t_\ep^p))}|\nabla u_\ep|^2\ud x=&\liminf_{\ep\to 0}\frac 1 2\int_{V}\Big|\frac{2\lambda_{u_\ep}}{|\log\rad(\B_\ep)|}\Big|^2\chi_{V(\B_\ep(t_\ep^p))}\ud x\\
				\ge& \, 2\int_{V}|\lambda|^2\ud x\,,
			\end{aligned}
		\end{equation*}
		which, together with \eqref{coreen} (letting $p\to 0$), yields (ii).
	\end{proof}
		\section{Proof of Theorem \ref{mainthm}}\label{sec:5}
		This section is devoted to the proof of Theorem \ref{mainthm}. 
		Using the density argument as in the proof of Theorem \ref{mainthmVECCHIO}, we can prove Theorem \ref{mainthm} with $\mathcal F_\ep$ replaced by $\mathcal G_\ep$ in \eqref{defEne}.

		The proofs of the compactness and of the lower bound are addressed in Subsection \ref{proofmainthmi} and closely resemble those in the $|\log\ep|$ regime treated in \cite{DLSVG}, whereas the construction of the recovery sequence is provided in Subsection \ref{proofmainthmii}.

		\subsection{Proof of compactness and lower bound}\label{proofmainthmi} 
		By the energy bound \eqref{enbound}, together with Corollary~\ref{thm:densitySBV}, we have that
		\begin{equation}\label{saltolim}
			\Huno(\overline S_{u_\ep})\le C\ep|\log\ep|^2\,,
		\end{equation}
		for every $\ep>0$\,. By the very definition of Hausdorff measure, since $\overline S_{u_\ep}$ is compact, there exists a finite family $\B_\ep$ of open balls (in $\R^2$) such that $\overline S_{u_\ep}\subset \bigcup_{B\in\B_\ep}B$ and $\rad(\B_\ep)\le C\ep|\log\ep|^2$\,. Notice that we can always assume (just by enlarging an arbitrarily chosen ball in $\B_\ep$) that $\rad(\B_\ep)\ge \ep$\,, so that, from now on
		\begin{equation}\label{sommaraggi}
			\ep\le\rad(\B_\ep)\le C\ep|\log\ep|^2\,,
		\end{equation}
		for some $C>0$\,. Moreover, by construction,
		\begin{equation}\label{uhuno}
			u_\ep\in H^1(\Omega(\B_\ep);\Ss^1)\,,
		\end{equation}
		where we recall that $\Omega(\B_\ep):=\Omega\setminus\bigcup_{B\in\B_\ep}\overline B$\,. By \eqref{sommaraggi} and recalling that $\overline S_{u_\ep}\subset\overline{\Omega'}$\,, we can assume that, for $\ep$ small enough, 
		\begin{equation}\label{palledentro}
			\bigcup_{B\in\B_\ep}\overline B\subset\Omega\,.
		\end{equation}
		Up to applying a merging procedure (as described in  Definition \ref{merging_def}) to the balls in $\B_\ep$\,, we can assume without loss of generality that these balls are mutually (essentially) disjoint, and still satisfy \eqref{sommaraggi} and \eqref{palledentro}. 
		For $\ep>0$ small enough  we set
		\begin{equation}\label{defmisin}
			\mu_\ep:=\sum_{B\in\B_\ep}\deg(u_\ep,\partial B)\delta_{x(B)}\,.
		\end{equation}
		By \eqref{enbound}, \eqref{sommaraggi}, and \eqref{uhuno}, for $\ep$ small enough it holds
		\begin{equation}\label{enboundmeglio}
			\frac 1 2\int_{\Omega(\B_\ep)}|\nabla u_\ep|^2\,\ud x \le\mathcal G_\ep(u_\ep)\le C|\log\ep|^2\le C|\log\rad(\B_\ep)|^2\,.
		\end{equation} 
		Therefore we can apply Theorem \ref{alipons_crit} to the family $\{(\B_\ep;\mu_\ep)\}_\ep$\,.
		Notice that, in view of the very definition of $\mathcal G_\ep$\,, we have that also the family $\B_\ep(1)$ satisfies \eqref{palledentro} (for $\ep$ small enough), so that $\Ccal_\ep(1)\equiv \B_\ep(1)$\,.
		Setting
		$$
		\widetilde\mu_\ep:=\sum_{B\in\B_\ep(1)}\mu_\ep(B)\delta_{x(B)}\,,
		$$
		by Theorem \ref{alipons_crit}(i) (more precisely, by \eqref{cra_convmis}), using \eqref{sommaraggi}, we have that, up to a subsequence,
		\begin{equation}\label{dacorerad}
			\frac{\widetilde\mu_\ep}{|\log\ep|}\fla \mu\,,
		\end{equation} 
		for some $\mu\in\M(\Omega)$\,. By construction, $\supp\mu\subseteq \overline{\Omega'}$\,.
		Moreover, by arguing verbatim as in the proof of \cite[formula (3.17)]{DLSVG} one can prove that
		\begin{equation*}
			\frac{1}{|\log\ep|}\|{Ju_\ep}-\pi\widetilde\mu_\ep\|_{\flt,\Omega}\to 0\,,
		\end{equation*}
		which, combined with \eqref{dacorerad}, yields \eqref{compj}.
		Furthermore, by \eqref{perlift} and by \eqref{enbound}, we have that
		\begin{equation*}
			\frac{1}{|\log\ep|^2}\int_{\Omega}|2\Td_{u_\ep}|^2\ud x=\frac{1}{|\log\ep|^2}\int_{\Omega}|\nabla u_\ep|^2\ud x\le C\,,
		\end{equation*}
		so that, up to a further subsequence,
		\begin{equation}\label{convebetaep}
			\frac{\Td_{u_\ep}}{|\log\ep|}\weakly \Td\qquad\textrm{in }L^2(\Omega;\R^2)\,,
		\end{equation}
		for some field $\Td\in L^2(\Omega;\R^2)$\,. This proves \eqref{compac}.
		
		It remains to show that $-\mathrm{Div}\,\Td=\pi\mu$\,, which will imply also that $\mu\in H^{-1}(\Omega)$\,.
		To this end, let $\ffi\in\Cc^{\infty}(\Omega)$\,; then, by \eqref{perlift},
		\begin{equation}\label{aczero}
			\langle Ju_\ep,\ffi\rangle = \langle T_{u_\ep},\nabla\ffi\rangle=\langle \Td_{u_\ep},\nabla\ffi\rangle+\langle T^S_{u_\ep},\nabla\ffi\rangle\,,
		\end{equation}
		for every $\ep>0$\,.
		By \eqref{compac}, we have that
		\begin{equation}\label{acuno}
			\frac{1}{|\log\ep|}\langle \Td_{u_\ep},\nabla\ffi \rangle \to \langle\Td,\nabla \ffi\rangle\qquad \textrm{as }\ep\to 0\,;
		\end{equation}
		{moreover, by \eqref{perlift} and \eqref{enbound}, we have that
			\begin{equation}\label{acduesper}
				\big|\langle T^{S}_{u_\ep},\nabla\ffi\rangle\big|\le C\ep\|\nabla\ffi\|_{L^\infty}|\log\ep|^2\,.
			\end{equation}
		}
		By, \eqref{compj}, \eqref{aczero}, \eqref{acuno}, and \eqref{acduesper}, 
		\begin{equation*}
			\langle \pi\mu,\ffi\rangle=\lim_{\ep\to 0}\frac{1}{|\log\ep|}\langle Ju_\ep,\ffi\rangle=\lim_{\ep\to 0}\frac{1}{|\log\ep|}\langle \Td_{u_\ep}, \nabla \ffi\rangle=\langle\Td,\nabla\ffi\rangle=\langle -\mathrm{Div}\Td,\phi\rangle\,,
		\end{equation*}
		thus concluding the proof of (i).
		\medskip
		
		Now we prove the lower bound (ii). We can assume without loss of generality that \eqref{enbound} holds true. By the first inequality in \eqref{enboundmeglio} and by Theorem \ref{alipons_crit}(ii) we have immediately that
		\begin{equation*}
			\liminf_{\ep\to 0}\frac{\mathcal G_\ep(u_\ep)}{|\log\ep|^2}\ge \pi|\mu|(\Omega)+2\int_{\Omega}|\Td|^2\ud x\,,
		\end{equation*}
		where we have used also that the function $\Td$ coincides with the field $\lambda$ in Theorem \ref{alipons_crit}. The claim thus follows.

\subsection{Proof of the upper bound}\label{proofmainthmii}
In order to construct the recovery sequence, we first introduce some notation.
Let $r>0$ be fixed\,. For every finite sum of Dirac deltas $\mu:=\sum_{n=1}^{N}\delta_{x^n}$ with $|x^{n_1}-x^{n_2}|\geq 2r$ for $n_1\neq n_2$ and for every $0<\rho<r$ we set
\begin{equation}\label{forsesola}
\widehat\mu^\rho:=\frac{1}{2\pi\rho}\sum_{n=1}^{N}\Huno\res\partial B_{\rho}(x^n)\,,\qquad\widetilde f^\rho:=\frac{1}{\pi\rho^2}\sum_{n=1}^{N}\chi_{B_{\rho}(x^n)}\,,\qquad\textrm{and}\qquad \widetilde\mu^\rho:=\widetilde f^{\rho}\ud x\,.
\end{equation}
For every $r>0$ and for every $z\in\R^2$\,, we recall that  $Q_r(z)$ denotes the (open) square centered at $z$ with sides parallel to the cartesian axes and side-length equal to $2r$\,, i.e., $Q_r(z):=z+Q_r(0)$\,, with $Q_r(0):=(-r,r)^2$\,. 

\begin{lemma}	\label{ovvieta}
Let $\mu:=\sum_{l=1}^L m^l\chi_{\omega^l}\mathrm{d} x$, where $L\in\N$, $m^l\in\R$ 
and $\{\omega^l\}_{l=1,\ldots,L}$ is a partition of $\Omega$ into sets with Lipschitz continuous boundary. 
Let $N_\ep\to +\infty$ as $\ep\rightarrow0$\,. For every $\ep>0$ and for every $l=1,\ldots,L$ with $m^l\neq 0$,
set 
\begin{equation}\label{defrepl}
r^l_\ep:=\frac{1}{2\sqrt{N_\ep|m^l|}}.
\end{equation}
For every $l=1,\ldots,L$ with $m^l\neq 0$, let  
$\mathcal{Z}_\ep^{l}:=\{z\in 2r^l_\ep\Z^2\,:\,{{\ol Q}_{r^l_\ep}(z)}\subset\omega^l\}$ and $N_\ep^l:=\sharp \mathcal{Z}_\ep^{l}$\,.
Then, for every $l=1,\ldots,L$ with $m^l\neq 0$,
\begin{equation}\label{sistema}
\frac{N^l_\ep}{N_\ep}\to |m^l| |\omega^l| \qquad\textrm{as }\ep\to 0\,.
\end{equation}
Moreover, setting
$\mu_\ep^l:=\sum_{n=1}^{N_\ep^l}\delta_{x_\ep^{l,n}}$ (where $\{x_\ep^{l,n}\}_{n=1}^{N_\ep^l}$ is the set of points in $\mathcal{Z}_\ep^{l}$) for every $l=1,\ldots,L$ with $m^l\neq 0$,
$\mu_\ep^l\equiv 0$ whenever $m^l=0$, and 
$\mu_\ep:=\sum_{l=1}^{L}\mu_\ep^l$, we have that the sequence $\{\mu_\ep\}_\ep\subset\M(\Omega)$ satisfies
\begin{itemize}
\item[(a)]  for every $\ep>0$ and for every $l=1,\ldots,L$ with $m^l\neq 0$ and $n=1,\ldots,N_\ep^{l}$\,, the balls $B_{r^l_\ep}(x^{l,n}_\ep)$ are pairwise disjoint and contained in $\omega^l$\,;
\item[(b)] $\frac{\mu_\ep}{N_\ep}\weakstar \mu$ in $\M(\Omega)$ as $\ep\to 0$\,;
\item[(c)] $\big\|\frac{\widetilde\mu_\ep}{N_\ep}-\mu\big\|_{H^{-1}(\Omega)}\le{C}{{N_\ep^{-\frac 1 4}}}$ (for $\ep$ small enough),
\end{itemize}
where $\widetilde\mu_\ep:=\sum_{l=1}^{L}\widetilde\mu_\ep^{l,r^l_\ep}$, with $\widetilde\mu_\ep^{l,r^l_\ep}$ defined as in \eqref{forsesola} starting from $\mu_\ep^l$\,.
\end{lemma}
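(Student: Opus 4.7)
The plan is to handle (a) by inspection, then verify \eqref{sistema} and (b) by Riemann-sum estimates, and finally tackle (c)---the main obstacle---via duality.

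\textbf{Part (a).} By construction, within each $l$ the centers $x_\ep^{l,n} \in 2r^l_\ep \Z^2$ are mutually separated by at least $2r^l_\ep$, hence the open balls $B_{r^l_\ep}(x_\ep^{l,n})$ are pairwise disjoint. Since each such ball is inscribed in the square $Q_{r^l_\ep}(x_\ep^{l,n})$ and $\ol{Q}_{r^l_\ep}(x_\ep^{l,n}) \subset \omega^l$ by the very definition of $\mathcal{Z}_\ep^l$, the containment in $\omega^l$ is immediate; disjointness across different indices $l$ is a consequence of $\{\omega^l\}_{l=1}^L$ being a partition of $\Omega$.

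\textbf{Proof of \eqref{sistema} and of (b).} I count tiles: the closed squares $\ol{Q}_{r^l_\ep}(z)$ with $z\in 2r^l_\ep \Z^2$ tile $\R^2$, and $N_\ep^l$ is the number of such tiles contained in $\omega^l$. A tile that is not counted either lies outside $\omega^l$ or intersects $\partial\omega^l$; the union of the latter is contained in a $\sqrt{2}\,r^l_\ep$-neighborhood of $\partial\omega^l$, whose $\Ld$-measure is bounded by $C r^l_\ep \Huno(\partial\omega^l)$ thanks to the Lipschitz regularity of $\partial\omega^l$. Using $(2r^l_\ep)^2 = 1/(N_\ep|m^l|)$ from \eqref{defrepl}, this gives $N_\ep^l/N_\ep = |m^l|(|\omega^l|+O(r^l_\ep))$, i.e.\ \eqref{sistema}. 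For (b), I test against $\phi \in C_0(\ol{\Omega})$: from $1/N_\ep = |m^l|(2r^l_\ep)^2$,
\[
\frac{1}{N_\ep}\sum_{n=1}^{N_\ep^l} \phi(x_\ep^{l,n}) = |m^l|\sum_{n=1}^{N_\ep^l} \phi(x_\ep^{l,n})(2r^l_\ep)^2,
\]
which is a Riemann sum for $|m^l|\int_{\omega^l}\phi\ud x$ by uniform continuity of $\phi$ together with the boundary correction from \eqref{sistema}. Summing over $l$ (with the sign convention matching the signed target $\mu$) yields the weak-$*$ convergence.

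\textbf{Part (c), the main obstacle.} I argue by duality in $H^{-1}(\Omega)$: take $\psi \in H^1_0(\Omega)$ with $\|\nabla\psi\|_{L^2(\Omega)}\le 1$ and decompose on each $\omega^l$
\[
\frac{\widetilde\mu_\ep^{l,r^l_\ep}}{N_\ep}-m^l\chi_{\omega^l}\ud x=\sum_{n=1}^{N_\ep^l}\Big(\frac{\chi_{B_{r^l_\ep}(x_\ep^{l,n})}}{N_\ep\pi(r^l_\ep)^2}-m^l\chi_{Q_{r^l_\ep}(x_\ep^{l,n})}\Big)\ud x \, - \, m^l\chi_{E_\ep^l}\ud x,
\]
where $E_\ep^l:=\omega^l\sm\bigcup_n Q_{r^l_\ep}(x_\ep^{l,n})$ is contained in a strip around $\partial\omega^l$ of width $O(r^l_\ep)$, so $|E_\ep^l|=O(r^l_\ep)=O(N_\ep^{-1/2})$. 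Each summand in the first term has zero integral on $Q:=Q_{r^l_\ep}(x_\ep^{l,n})$, so testing against $\psi-\bar\psi_Q$ (with $\bar\psi_Q:=\fint_Q\psi\ud x$) and invoking Poincaré--Wirtinger on $Q$, together with the explicit estimate $\big\|\tfrac{\chi_B}{N_\ep\pi(r^l_\ep)^2}-m^l\chi_Q\big\|_{L^2(Q)}^2\lesssim |m^l|/N_\ep$ (which follows from $(r^l_\ep)^2=1/(4N_\ep|m^l|)$), yields the per-square bound $C(|m^l|/N_\ep)^{1/2}r^l_\ep\|\nabla\psi\|_{L^2(Q)}$. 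Summing by Cauchy--Schwarz over the $N_\ep^l\sim N_\ep|m^l||\omega^l|$ disjoint squares produces a total contribution of order $N_\ep^{-1/2}$. For the boundary strip, Cauchy--Schwarz and the Poincaré inequality on $\Omega$ give
\[
|m^l|\Big|\int_{E_\ep^l}\psi\ud x\Big|\le |m^l|\,|E_\ep^l|^{1/2}\|\psi\|_{L^2(\Omega)}\le C N_\ep^{-1/4},
\]
which is the dominant term and produces the claimed rate $N_\ep^{-1/4}$. Summing over the finitely many indices $l$ and taking the supremum over $\psi$ concludes (c); the hard part is precisely that the $N_\ep^{-1/4}$ rate is driven by the geometric defect along $\partial\omega^l$ rather than by the finer interior cancellation.
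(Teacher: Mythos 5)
Your parts (a), \eqref{sistema}, and (b) follow the same tiling/Riemann-sum route as the paper, and they are correct (the paper likewise tacitly takes $m^l\ge 0$ or a matching sign on the Dirac weights, so your parenthetical "sign convention" remark is appropriate). Part (c) is where you genuinely diverge. The paper builds an explicit Neumann corrector: it solves
\begin{equation*}
\Delta \bar v=\frac{4}{\pi}\chi_{B_1}-1\ \text{in}\ Q_1(0),\qquad \partial_\nu \bar v=0\ \text{on}\ \partial Q_1(0),
\end{equation*}
rescales $\bar v$ to each micro-square $Q_{r^l_\ep}(x_\ep^{l,n})$, represents the interior error $\eta_\ep$ as $-\sum \Delta v^{l,n}_\ep$, and estimates $\|\eta_\ep\|_{H^{-1}}$ via integration by parts, H\"older and Young, then treats the boundary layer $\omega\setminus\omega_\ep$ separately by Cauchy--Schwarz. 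You instead argue directly by duality, noting that each local density
\begin{equation*}
g_n:=\frac{\chi_{B_{r^l_\ep}(x^{l,n}_\ep)}}{N_\ep\pi(r^l_\ep)^2}-m^l\chi_{Q_{r^l_\ep}(x^{l,n}_\ep)}
\end{equation*}
has zero integral on its cube, so pairing against $\psi-\fint_{Q}\psi$ and applying Poincar\'e--Wirtinger on $Q_{r^l_\ep}(x^{l,n}_\ep)$ gives a per-square bound $C(|m^l|/N_\ep)^{1/2}r^l_\ep\|\nabla\psi\|_{L^2(Q)}$; summing by Cauchy--Schwarz over the $O(N_\ep)$ disjoint squares gives $O(N_\ep^{-1/2})$, and your estimate of the boundary strip reproduces the paper's $O(N_\ep^{-1/4})$, which both proofs correctly identify as the dominant term. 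Your route is more elementary and shorter (it avoids constructing and rescaling an auxiliary solution of an elliptic problem), at the cost of being slightly less explicit; the paper's corrector construction has the virtue of making the interior cancellation structurally visible and is the device more commonly reused in the Ginzburg--Landau literature. Both are valid and yield the same rate.
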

\begin{proof}
For every $l=1,\ldots,L$ with $m^l\neq 0$ we set $\overline{\omega}^l_\ep:=\bigcup_{z\in\mathcal{Z}_\ep^l}\overline{Q}_{r^l_\ep}(z)$ and we denote by $\omega_\ep^l:=\text{{\rm int}}({\overline\omega^l_\ep})$ the set of interior points of $\overline\omega_\ep^l$\,.
We set $r_\ep:=\min\{r_\ep^l:l=1,\dots,L\}$ and $R_\ep:=\max\{r_\ep^l:l=1,\dots,L,\;m^l\neq0\}$, and we notice that $r_\ep\rightarrow 0$ as $\ep\rightarrow0$.

Property \eqref{sistema} is straightforward. Indeed, let $l\in\{1,\ldots,L\}$ with $m^l\neq 0$; setting $(\partial \om^l)_{2r^l_\ep}:=\{x\in \om^l:\dist(x, \partial\om^l)<2r^l_\ep\}$\,, it is sufficient to observe that $\left|(\omega^l\setminus \ol\omega^l_\ep)\setminus(\partial \om^l)_{2r^l_\ep}\right|=0$\,, and hence,
by the Lipschitz continuity of $\partial\omega^l$\,, 
\begin{equation}\label{area_piccola}
|\om^l\setminus \ol\om^l_\ep|=\mathrm{O}(r^l_\ep)=\mathrm{O}(R_\ep)\,,
\end{equation}
where $\limsup_{\ep\rightarrow0}\mathrm{O}(R_\ep)R_\ep^{-1}<C<+\infty$. On the other hand, $|\ol\omega^l_\ep||m^l|=4(r_\ep^l)^2N_\ep^l|m^l|=\frac{N_\ep^l}{N_\ep}$, which, together with \eqref{area_piccola}, yields 
$|\omega^l|=|\om^l\setminus \ol\om^l_\ep|+|\ol\om^l_\ep|=\frac{\mathrm{O}(R_\ep)}{|m^l|}+\frac{N_\ep^l}{|m^l|N_\ep}$\,, and hence \eqref{sistema}.

Property (a) follows by construction.
Now we prove property (b). Let $\phi\in \Cc(\Omega)$\,, and let $m_\phi$ be the modulus of continuity of $\phi$\,. Then, setting 
\begin{equation}\label{defemmegrande}
M:=\max\{|m^l|:l=1,\dots,L\},
\end{equation}
we have that, as $\ep\to 0$,
\begin{equation*}
\begin{aligned}
\Big|\Big\langle \frac{\mu_\ep}{N_\ep}-\mu,\phi\Big\rangle_{\Omega}\Big|\le&\,\sum_{\newatop{l=1}{m^l\neq 0}}^Lm^l\sum_{n=1}^{N^l_\ep}\int_{Q_{r^l_\ep}(x_\ep^{l,n})}|\phi(x_\ep^{l,n})-\phi(x)|\,\ud x+\sum_{\newatop{l=1}{ m^l\neq 0}}^Lm^l\int_{\omega^l\setminus\ol\omega^l_\ep}|\phi(x)|\,\ud x\\
\le&\,4M\sum_{\newatop{l=1}{ m^l\neq 0}}^LN^l_\ep (r_\ep^l)^2 \max_{t\in [0,2\sqrt{2} r^l_\ep]}m_\phi(t)+M\|\phi\|_{L^\infty}\sum_{\newatop{l=1}{ m^l\neq 0}}^L|\omega^l\setminus\ol\om^l_\ep|\to 0,
\end{aligned}
\end{equation*}
where we have used \eqref{defrepl}, \eqref{sistema}, and \eqref{area_piccola};
this proves (b). 

We are left with the proof of (c). 
We set 
\begin{align*}
	\omega:=\bigcup_{\newatop{l=1}{ m^l\neq 0}}^{L}\omega^l,\qquad \qquad 
	\omega_\ep:=\bigcup_{\newatop{l=1}{ m^l\neq 0}}^{L}\omega_\ep^l,\qquad \qquad \eta_\ep:=(\frac{\widetilde\mu_\ep^{r_\ep}}{N_\ep}-\mu)\chi_{\om_\ep},
\end{align*}
 so that 
\begin{equation}\label{decomis}
	\frac{\widetilde\mu_\ep^{r_\ep}}{N_\ep}-\mu=\eta_\ep-\mu\chi_{\omega\setminus \om_\ep}\,.
\end{equation}
Let $\bar v\in H^1(Q_1(0))$ be  {a} solution to 
\begin{equation*}
	\left\{
	\begin{array}{ll}
		\Delta v=\frac{4}{\pi}\chi_{B_1}-1&\textrm{ in }Q_1(0)\\
		\partial_\nu v=0&\textrm{ on }\partial Q_1(0)\,.
	\end{array}
	\right.
\end{equation*}
We then define for every $\ep>0$ and for every $l=1,\ldots,L,$ and $n=1,\ldots,N_\ep^l$\,,
\begin{align*}
v_\ep^{l,n}(x):={\frac{1}{2N_\ep}} \bar v(\frac{x-x_\ep^{l,n}}{r^l_\ep});
\end{align*}
notice that $\nabla  v_\ep^{l,n}(x)=\sqrt{\frac{m^l}{N_\ep}}\nabla \overline v(\frac{x-x_\ep^{l,n}}{r^l_\ep})$, so
\begin{equation}\label{riscala}
	\|\nabla v_\ep^{l,n}\|^2_{L^2(Q_{r^l_\ep}(x^{l,n}_\ep);\R^2)}=\frac{1}{4N_\ep^2}\|\nabla \bar v\|^2_{L^2(Q_1(0);\R^2)}
\end{equation}
and 
\begin{equation*}
	-\Delta v^{l,n}_\ep=\eta_\ep\qquad \text{in}\;\;\;Q_{r^l_\ep}(x^{l,n}_\ep)\,
	,\qquad\qquad\partial_\nu v_\ep^{l,n}=0\;\;\text{ on }\;\;\partial Q_{r^l_\ep}(x^{l,n}_\ep).
\end{equation*}
Integrating by parts, using H\"older inequality, \eqref{riscala}, and Young inequality, it follows that 
\begin{equation}\label{quasilast}
	\begin{aligned}
		\|\eta_\ep\|_{H^{-1}(\Omega)} =&\,\sup_{\newatop{\phi\in H^1_{0}(\Om)}{\|\phi\|_{H^1(\Om)}\le 1}}\int_\Om\phi \ud\eta_\ep=\sup_{\newatop{\phi\in H^1_{0}(\Om)}{\|\phi\|_{H^1(\Om)}\le 1}}
		\sum_{\newatop{l=1}{m^l\neq 0}}^{L}\sum_{n=1}^{N^l_\ep}	\int_{Q_{r^l_\ep}(x_\ep^{l,n})}\phi \Delta v^{l,n}_\ep \ud x\\
		=&\,\sup_{\newatop{\phi\in H^1_{0}(\Om)}{\|\phi\|_{H^1(\Om)}\le 1}}\sum_{\newatop{l=1}{m^l\neq 0}}^L \left(\sum_{n=1}^{N^l_\ep}	\int_{Q_{r^l_\ep}(x_\ep^{l,n})}\nabla\phi\cdot \nabla v^{l,n}_\ep \ud x\right)\\
		\le&\, \sup_{\newatop{\phi\in H^1_{0}(\Om)}{\|\phi\|_{H^1(\Om)}\le 1}}\sum_{\newatop{l=1}{m^l\neq 0}}^{L}\sum_{n=1}^{N^l_\ep}	\|\nabla\phi\|_{L^2(Q_{r^l_\ep}(x_\ep^{l,n});\R^2)}\| \nabla v^{l,n}_\ep\|_{L^2(Q_{r^l_\ep}(x_\ep^{l,n});\R^2)}\\
		=&\,\sup_{\newatop{\phi\in H^1_{0}(\Om)}{\|\phi\|_{H^1(\Om)}\le 1}}\sum_{\newatop{l=1}{m^l\neq 0}}^{L}\frac{1}{2N_\ep}\sum_{n=1}^{N^l_\ep}	\|\nabla\phi\|_{L^2(Q_{r^l_\ep}(x_\ep^{l,n});\R^2)}\| \nabla \bar v\|_{L^2(Q_{1}(0);\R^2)}\\
		\le &\,\sup_{\newatop{\phi\in H^1_{0}(\Om)}{\|\phi\|_{H^1(\Om)}\le 1}}\sum_{\newatop{l=1}{m^l\neq 0}}^{L}\frac{1}{4N_\ep}\sum_{n=1}^{N_\ep^l}\Big(\frac{1}{N_\ep^{\frac12}}\|\nabla \bar v\|^2_{L^2(Q_1(0);\R^2)}+N_\ep^{\frac12}\|\nabla\phi\|^2_{L^2(Q_{r^l_\ep}(x_\ep^{l,n});\R^2)}\Big)\\
		\le &\,\frac{1}{2}\sum_{\newatop{l=1}{m^l\neq 0}}^L\frac{N_\ep^l}{N_\ep^{\frac32}}\|\nabla \bar v\|^2_{L^2(Q_1(0);\R^2)}+ \frac{1}{4N_\ep^{\frac 1 2}} \le \frac{C}{N_\ep^{\frac 1 2}}\,,
	\end{aligned}
\end{equation}
where the last inequality follows from \eqref{sistema} and \eqref{defrepl}.
Finally, by H\"older inequality, \eqref{defrepl} and \eqref{area_piccola}, we obtain
\begin{equation}\label{last2}
	\|\mu\chi_{\omega\setminus\omega_\ep}\|_{H^{-1}(\Omega)}=\sup_{\newatop{\phi\in H^1_{0}(\Omega)}{\|\nabla\phi\|_{L^2(\Omega;\R^2)}\le 1}}\int_{\omega\setminus\omega_\ep}\phi\ud \mu
	\le \sup_{\newatop{\phi\in H^1_{0}(\Omega)}{\|\nabla\phi\|_{L^2(\Omega;\R^2)}\le 1}}M \|\phi\|_{L^2(\Omega)}|\omega\setminus\omega_\ep|^{\frac{1}{2}}\le \frac{C}{N_\ep^{\frac 1 4}}\,,
\end{equation}
with $M$ defined in \eqref{defemmegrande}; this, combined with \eqref{decomis} and \eqref{quasilast}, yields (c).

\end{proof}

\vskip3mm
\begin{proof}[Proof of Theorem \ref{mainthm}(iii)] 
We divide the proof into two cases. 
\medskip

{\it Case 1: $\mu=\sum_{l=1}^{L}m^l\chi_{\omega^l}\mathrm{d} x$\,, 
where $L\in\N$\,, $m^l\in\R$\,, and $\{\omega^l\}_{l=1,\ldots,L}$ is a partition of $\Omega$ into  sets with Lipschitz continuous boundary.
}

We divide the proof into two steps. In the first one we construct the recovery sequence $\{(\mu_\ep;\overline\beta_\ep)\}_\ep$ for the core-radius  problem; in the second step, we exploit the structure of  $\{(\mu_\ep;\overline\beta_\ep)\}_\ep$ to build up  the recovery sequence $\{u_\ep\}_\ep$ for the functional $\mathcal G_\ep$\,.
\vskip3pt
{\it First step: Construction of the discrete measure $\mu_\ep$ and of the core-radius field $\overline\beta_\ep$\,.}	 
For every $\ep>0$\,, we set 
\begin{equation}\label{sceltadiNep}
N_\ep:=\lfloor |\log\ep|\rfloor
\end{equation}
and let
\begin{equation}\label{defmuep}
\mu_\ep:=\sum_{l=1}^{L}\mu^l_\ep=\sum_{\newatop{l=1}{m^l\neq 0}}^{L}\sum_{n=1}^{N_\ep^l}\delta_{x_{\ep}^{l,n}}
\end{equation}
be the measure provided by Lemma \ref{ovvieta}.
Set  
\begin{equation}\label{inssalto}
S:=\{(0;x_2)\,:\,x_2<0\}
\end{equation}
and let 
$\vartheta\in C^{\infty}(\R^2\setminus \overline{S})$ be the function defined by
\begin{equation}\label{deftheta0}
\vartheta(x):=\left\{\begin{array}{ll}
	\arctan\frac{x_2}{x_1}&\textrm{if }x_1>0\\
	\frac{\pi}{2}&\textrm{if }x_1=0 \textrm{ and }x_2>0\\
	\pi+\arctan\frac{x_2}{x_1}&\textrm{if }x_1<0\\
	\frac{3}{2}\pi&\textrm{if }x_1=0  \textrm{ and }x_2<0\,.
\end{array}
\right.
\end{equation}
For every $l=1,\ldots,L$ with $m^l\neq0$ and for every $n=1,\ldots,N_\ep^l$\,, let $\widehat K_\ep^{l,n}\in L^2_{\loc}(\R^{2}\setminus\{x_\ep^{l,n}\};\R^2)$ and  $\widetilde K^{l,n}_\ep\in L^2_{\loc}(\R^2;\R^2)$  be the functions defined by
\begin{equation*}
\widehat{K}_\ep^{l,n}(x):=\frac{1}{2\pi}\nabla\vartheta(x-x_\ep^{l,n})=\frac{1}{2\pi|x-x_\ep^{l,n}|^2}(-(x_2-x_{\ep,2}^{l,n}); x_1-x_{\ep,1}^{l,n})\,,
\end{equation*}
and
\begin{equation*}
\widetilde{K}_\ep^{l,n}(x):=\frac{|x-x_\ep^{l,n}|^2}{2\pi (r^l_\ep)^2}\nabla\vartheta(x-x_\ep^{l,n})=\frac{1}{2\pi (r^l_\ep)^2}(-(x_2-x_{\ep,2}^{l,n}); x_1-x_{\ep,1}^{l,n})\,,
\end{equation*}
respectively.
Recalling that $A_{r,R}(x):=B_R(x)\setminus\overline{B}_{r}(x)$ (for every $0<r<R$ and for every $x\in\R^2$)\,, we define
\begin{equation*}
\widehat K_\ep:=\sum_{\newatop{l=1}{ m^l\neq 0}}^{L}\sum_{n=1}^{N^l_\ep}\widehat K_\ep^{l,n}\chi_{A_{\ep,r^l_\ep}(x_\ep^{l,n})},	
\qquad\qquad
\widetilde K_\ep:=\sum_{\newatop{l=1}{ m^l\neq 0}}^{L}\sum_{n=1}^{N^l_\ep}\widetilde K_\ep^{l,n}\chi_{B_{r^l_\ep}(x_\ep^{l,n})}\,.
\end{equation*}	
Now, as in \eqref{forsesola}, for all $l=1,\dots,L$ with $m^l\neq 0$ and for every $0<\rho<r_\ep^l$ we set
$$\widetilde\mu^{l,\rho}_\ep:=\frac{1}{\pi\rho^2}\sum_{n=1}^{N^l_\ep}\chi_{B_{\rho}(x^{l,n}_\ep)}\ud x\,,\qquad \text{and }\qquad\widetilde\mu_\ep:=\sum_{l=1}^{L}\widetilde\mu_\ep^{l,r^l_\ep},$$
and similarly
$$\widehat\mu^{l,\rho}_\ep:=\frac{1}{2\pi\rho}\sum_{n=1}^{N}\Huno\res\partial B_{\rho}(x^{l,n}_\ep)\,,\qquad \text{and }\qquad\widehat\mu_\ep:=\sum_{l=1}^{L}\widehat\mu_\ep^{l,r^l_\ep}.$$
Eventually, we introduce
$$\widehat\mu^\ep_\ep:=\sum_{l=1}^{L}\widehat\mu_\ep^{l,\ep}$$
Then, using the notation just introduced, 
we have
\begin{equation}\label{icurl}
\cu\widehat K_\ep=\widehat\mu_\ep^\ep-\widehat\mu_\ep\qquad\textrm{and}\qquad\cu\widetilde K_\ep=\widetilde\mu_\ep-\widehat\mu_\ep\,.
\end{equation}
Let $v_\ep\in H^1(\Omega)$ be {the} solution to 
\begin{equation}\label{ellipt}
\left\{
\begin{array}{ll}
	-\Delta v=\widetilde\mu_\ep-N_\ep \mu&\textrm{in }\Omega\\
	{v= 0}&\textrm{on }\partial\Omega\,.
\end{array}
\right.
\end{equation} 
{
Then, by Poincar\'e inequality and by Lemma \eqref{ovvieta}(c), we get
\begin{equation*}
	\|\nabla v_\ep\|_{L^2(\Omega;\R^2)}^2\le \|\widetilde\mu_\ep-N_\ep\mu\|_{H^{-1}(\Omega)}\|v_\ep\|_{H^1_0(\Omega)}\le C(\Omega)N_\ep^{\frac 3 4}\|\nabla v_\ep\|_{L^2(\Omega;\R^2)}\,,
\end{equation*}
whence, recalling \eqref{sceltadiNep} we get 
\begin{equation}\label{tendeazero}
	\frac{\nabla v_\ep}{\sqrt{N_\ep|\log\ep|}}\to 0\qquad\textrm{in }L^2(\Omega;\R^2)\,.
\end{equation}
}
Let 
\begin{equation}\label{defbeta}
\beta:=-{\frac{1}{\pi}}(\Td)^{\perp}\in L^2(\Omega;\R^2)\,,
\end{equation}
and set
\begin{equation}\label{betafuori}
\overline\beta_\ep:=N_\ep\beta+\widehat K_\ep-\widetilde K_\ep+\nabla^{\perp} v_\ep\,.
\end{equation}
By \eqref{icurl} and \eqref{ellipt},
\begin{equation}\label{gradogiusto}
\cu\overline\beta_\ep\res\Omega=N_\ep\mu+\widehat\mu_\ep^\ep-\widehat\mu_\ep-\widetilde\mu_\ep+\widehat\mu_\ep+\widetilde\mu_\ep-N_\ep \mu=\widehat\mu_\ep^\ep\,,
\end{equation}
so that $\cu\overline\beta_\ep=0$ in $\Omega_\ep(\mu_\ep):=\Omega\setminus\bigcup_{\newatop{l=1}{ m^l\neq 0}}^L\bigcup_{n=1}^{N_\ep^l}\overline{B}_\ep(x_\ep^{l,n})$\,.
Furthermore, by \eqref{gradogiusto}, for any $l=1,\ldots,L$ with $m^l\neq 0$ and for any $n=1,\ldots,N_\ep^l$, we have
\begin{equation*}
\int_{\partial B_\rho(x_\ep^{l,n})}\overline\beta_\ep\cdot \ttau\ud\Huno={1}
\qquad\textrm{for a.e. }\rho\in (\ep,r^l_\ep)\,.
\end{equation*}
Again by \eqref{gradogiusto}, setting $S_\ep^{l,n}:=x_\ep^{l,n}+S$ (with $S$ defined in \eqref{inssalto}) for every $l=1,\ldots,L$ with $m^l\neq 0$ and $n=1,\ldots, N^l_\ep$\,, there exists a function $\overline\vartheta_\ep\in H^1\Big(\Omega_\ep(\mu_\ep)\setminus\bigcup_{\newatop{l=1}{ m^l\neq 0}}^L\bigcup_{n=1}^{N^l_\ep}S_\ep^{l,n}\Big)$ such that
\begin{align}\label{def_thetaeps}
\overline\beta_\ep=\nabla\overline\vartheta_\ep\qquad\text{a.e. on }\Omega_\ep(\mu_\ep)\setminus\bigcup_{\newatop{l=1}{ m^l\neq 0}}^L\bigcup_{n=1}^{N^l_\ep}S_\ep^{l,n}.
\end{align}
In what follows, with a little abuse of notation, we denote by $\overline\vartheta_\ep$ and $\overline\beta_\ep$ the zero-extensions of $\overline\vartheta_\ep$ and $\overline\beta_\ep$ to $\bigcup_{\newatop{l=1}{m^l\neq 0}}^L\bigcup_{n=1}^{N_\ep^l}B_\ep(x_\ep^{l,n})$\,, respectively.
We now prove that 
\begin{equation}\label{convbeta}
\frac{\overline\beta_\ep}{\sqrt{N_\ep|\log\ep|}}\weakly \beta\qquad\textrm{in }L^2(\Omega;\R^{2})\,.	
\end{equation}
On the one hand, 
\begin{equation*}
\begin{aligned}
	\frac{1}{\sqrt{N_\ep|\log\ep|}}\int_{\Omega}|\widehat K_\ep|\ud x=&\,\frac{1}{\sqrt{N_\ep|\log\ep|}}\sum_{\newatop{l=1}{m^l\neq 0}}^L\sum_{n=1}^{N_\ep^l}\int_{A_{\ep,r^l_\ep}(x_\ep^{l,n})}|\widehat K_\ep^{l,n}|\ud x\\
	=&\,\frac{2\pi}{\sqrt{N_\ep|\log\ep|}}\sum_{\newatop{l=1}{m^l\neq 0}}^{L}(r^l_\ep-\ep)N_\ep^l\to 0\,.
\end{aligned}
\end{equation*}
On the other hand, recalling \eqref{sistema}, we also have
\begin{equation}\label{limitecore0}
\begin{aligned}
	\lim_{\ep\to 0}\frac{1}{N_\ep|\log\ep|}\int_{\Omega}|\widehat K_\ep|^2\ud x=&\,
	\lim_{\ep\to 0}\frac{1}{N_\ep|\log\ep|}\sum_{\newatop{l=1}{m^l\neq 0}}^L\sum_{n=1}^{N^l_\ep}\int_{A_{\ep,r^l_\ep}(x_\ep^{l,n})}|\widehat K^{l,n}_\ep|^2\ud x\\
	=&\,\lim_{\ep\to 0}\frac{\sum_{\newatop{l=1}{m^l\neq 0}}^LN_\ep^l\log\frac{r^l_\ep}{\ep}}{N_\ep|\log\ep|}\frac{1}{2\pi}= \frac{1}{2\pi}{{|\mu|(\Omega)}}\,,
\end{aligned}
\end{equation}
so that 
\begin{equation}\label{convdebkcapp}
\frac{\widehat K_\ep}{\sqrt{N_\ep|\log\ep|}}\weakly 0\qquad\textrm{in }L^2(\Omega;\R^2)\,.
\end{equation}
Moreover, by construction,
\begin{multline}\label{convfortektilde}
\frac{1}{N_\ep|\log\ep|}\int_{\Omega}|\widetilde K_\ep|^2\ud x\\
=\frac{1}{N_\ep|\log\ep|}\sum_{\newatop{l=1}{m^l\neq 0}}^{L}\frac{1}{4\pi^2 (r_\ep^l)^2}\sum_{n=1}^{N_\ep^l}\int_{B_{r^l_\ep}(x_\ep^{l,n})}|x-x_\ep^{l,n}|^2\ud x
\to 0\quad\textrm{in }L^2(\Omega;\R^2)\,;
\end{multline}
therefore, by the very definition of $\overline\beta_\ep$ in \eqref{betafuori}\,,
using \eqref{convdebkcapp}, \eqref{convfortektilde}, and \eqref{tendeazero}, we deduce \eqref{convbeta}.
Moreover, by \eqref{limitecore0},  \eqref{convfortektilde} and \eqref{tendeazero}, we easily get
\begin{equation}\label{stimaenbeta}
\lim_{\ep\to 0}
\frac{1}{N_\ep|\log\ep|}\int_{\Omega}|\overline\beta_\ep|^2\ud x=\frac{1}{2\pi}|\mu|(\Omega)+\int_{\Omega}|\beta|^2\ud x\,.
\end{equation}
\vskip3pt
{\it Second step: Construction of the recovery sequence $\{u_\ep\}_\ep$\,.}
{
Let $\bar l\in\{1,\ldots,L\}$ be such that $m^{\bar l}\neq 0$ and $\bar n\in\{1,\ldots,N_\ep^{\bar l}\}$ be fixed. The set $A_{\ep,2\ep}(x_\ep^{\bar l,\bar n})\setminus \bigcup_{\newatop{l=1}{m^l\neq 0}}^{L}\bigcup_{n=1}^{N_\ep^l}S_\ep^{l,n}$ is either connected or is given by the union of the two sets 
$$A^\pm_{\ep,2\ep}(x_\ep^{\bar l,\bar n}):=A_{\ep,2\ep}(x_\ep^{\bar l,\bar n})\cap\{x_1\gtrless x_{\ep,1}^{\bar l,\bar n}\}.$$
We set $a_\ep^{\bar l,\bar n,+}:=\fint_{A^+_{\ep,2\ep}(x_\ep^{\bar l,\bar n})}\overline\vartheta_\ep\ud x$ and $a_\ep^{\bar l,\bar n,-}:=\fint_{A^-_{\ep,2\ep}(x_\ep^{\bar l,\bar n})}\overline\vartheta_\ep\ud x$\,, where $\ol\vartheta_\ep$ is the function in \eqref{def_thetaeps}.
By construction $\overline\vartheta_\ep\in H^1(A^+_{\ep,2\ep}(x_\ep^{\bar l,\bar n}))$ and $\overline\vartheta_\ep\in H^1(A^-_{\ep,2\ep}(x_\ep^{\bar l,\bar n}))$\,, so that, since the sets $A^\pm_{\ep,2\ep}(x_\ep^{\bar l,\bar n})$ have Lipschitz continuous boundary, we can apply the Poincar\'e-Wirtinger inequality in $H^1(A^+_{\ep,2\ep}(x_\ep^{\bar l,\bar n}))$ and $H^1(A^-_{\ep,2\ep}(x_\ep^{\bar l,\bar n}))$\,, thus getting
\begin{equation}\label{usopoinwirt}
	\|\overline\vartheta_\ep-a_\ep^{\bar l,\bar n,+}\|^2_{L^2(A^+_{\ep,2\ep}(x_\ep^{\bar l,\bar n}))}+\|\overline\vartheta_\ep-a_\ep^{\bar l,\bar n,-}\|^2_{L^2(A^-_{\ep,2\ep}(x_\ep^{\bar l,\bar n}))}\le C\ep^2 \|\overline{\beta}_\ep\|^2_{L^2(A_{\ep,2\ep}(x_\ep^{\bar l,\bar n});\R^2)}\,,
\end{equation}
for some universal constant $C>0$\,.
}

Let $\sigma_\ep\in C^\infty(B_{2\ep}(0);[0,1])$ be such that $\sigma_\ep\equiv 0$ in $B_{\ep}(0)$\,, $\sigma_\ep\equiv 1$ in $A_{\frac 3 2\ep, 2\ep}(0)$ and that
\begin{equation}\label{stimacutoff}
|\nabla\sigma_\ep(x)|\le \frac{C}{\ep}\quad\textrm{for every  }x\in B_{2\ep}(0)\,,
\end{equation} 
for some constant $C>0$ independent of $\ep$ (and of $x$)\,.
For every $\ep>0$ we set
\begin{equation*}
\vartheta_\ep(x):=\left\{\begin{array}{ll}
	\sigma_\ep(x-x^{\bar l,\bar n}_\ep)\overline\vartheta_\ep(x)+(1-\sigma_\ep(x-x_\ep^{\bar l,\bar n}))a_\ep^{\bar l,\bar n,-} &\textrm{if }x\in B^{-}_{2\ep}(x^{\bar l,\bar n}_\ep)\textrm{ for some }\bar l=1,\ldots,L,\,\, \bar n=1,\ldots,N^{\bar l}_\ep\\
	\sigma_\ep(x-x^{\bar l,\bar n}_\ep)\overline\vartheta_\ep(x)+(1-\sigma_\ep(x-x_\ep^{\bar l,\bar n}))a_\ep^{\bar l,\bar n,+} &\textrm{if }x\in B^{+}_{2\ep}(x^{\bar l,\bar n}_\ep)\textrm{ for some }\bar l=1,\ldots,L,\,\, \bar n=1,\ldots,N^{\bar l}_\ep\\
	\overline\vartheta_\ep(x)&\textrm{if }x\in \Omega\setminus\bigcup_{\newatop{l=1}{m^l\neq 0}}^L\bigcup_{n=1}^{N^{l}_\ep}\overline{B}_{2\ep}(x^{l,n}_\ep)\,,
\end{array}
\right.
\end{equation*}
(where $\overline{B}^\pm_{2\ep}(x^{l,n}_\ep):=\overline{B}_{2\ep}(x^{l,n}_\ep)\cap \{x_1\gtrless x^{l,n}_{\ep,1}\}$ and $\bar l$ is such that $m^{\bar l}\neq 0$)
and we define $u_\ep:\Omega\to \Ss^1$ as 
\begin{equation}\label{recovery}
u_\ep(\cdot):=e^{2\pi\imath\vartheta_\ep(\cdot)}\,.
\end{equation}	
By construction,
\begin{equation}\label{uno}
	\overline S_{u_\ep}\subset {\bigcup_{\newatop{l=1}{m^l\neq 0}}^L\bigcup_{n=1}^{N^{l}_\ep}\overline{B}_{\frac32\ep}(x^{l,n}_\ep)}\qquad \text{and }\qquad
	\Huno(\overline S_{u_\ep})\le \sum_{\newatop{l=1}{m^l\neq 0}}^LN^l_\ep 4\ep \,,
\end{equation}
which, in view of \eqref{sistema}, implies
\begin{equation}\label{uno_bis}
	\lim_{\ep\to 0}\frac{1}{N_\ep|\log\ep|}\frac 1 \ep\Huno(\overline S_{u_\ep})=0\,.
\end{equation}
We claim that
\begin{equation}\label{convgradienti}
	\frac{\nabla\vartheta_\ep}{\sqrt{N_\ep|\log\ep|}}\weakly \beta\qquad\textrm{in }L^2(\Omega;\R^{2})\,,	
\end{equation}	
which, together with \eqref{uno_bis}, in view of \eqref{perlift}, will imply \eqref{compac}.
To show \eqref{convgradienti}, we start by observing that, by the very definition of  $\vartheta_\ep$ in $\bigcup_{\newatop{l=1}{m^l\neq 0}}^L\bigcup_{n=1}^{N^l_\ep}A_{\ep,2\ep}(x_\ep^{l,n})$ and by \eqref{usopoinwirt} (applied to every $l=1,\ldots,L$ with $m^l\neq 0$ and $n=1,\ldots,N^l_\ep$)\,, we get
\begin{multline}\label{convl1}
\frac{1}{N_\ep|\log\ep|}\|\nabla\vartheta_\ep\|^2_{L^2(\bigcup_{\newatop{l=1}{m^l\neq 0}}^L\bigcup_{n=1}^{N^l_\ep}A_{\ep,2\ep}(x_\ep^{l,n});\R^2)}\le \,\frac{2}{N_\ep|\log\ep|}\sum_{\newatop{l=1}{m^l\neq 0}}^L\sum_{n=1}^{N^l_\ep}\|\overline{\beta}_\ep\|^2_{L^2(A_{\ep,2\ep}(x_\ep^{l,n});\R^2)}\\
		\qquad\qquad\qquad\qquad\qquad\qquad\qquad\qquad\,+\frac{2}{N_\ep|\log\ep|}\frac{C}{\ep^2}\sum_{\newatop{l=1}{m^l\neq 0}}^L\sum_{n=1}^{N^l_\ep}\|\overline\vartheta_\ep-a_\ep^{l,n,+}\|^2_{L^2(A^{+}_{\ep,2\ep}(x_\ep^{l,n}))}\\
		\qquad\qquad\qquad\qquad\qquad\qquad\qquad\qquad\,+\frac{2}{N_\ep|\log\ep|}\frac{C}{\ep^2}\sum_{\newatop{l=1}{m^l\neq 0}}^L\sum_{n=1}^{N^l_\ep}\|\overline\vartheta_\ep-a_\ep^{l,n,-}\|^2_{L^2(A^{-}_{\ep,2\ep}(x_\ep^{l,n}))}\\
	\qquad\qquad\qquad\qquad\qquad	\le\,\frac{C}{N_\ep|\log\ep|}\sum_{\newatop{l=1}{m^l\neq 0}}^L\sum_{n=1}^{N^l_\ep}\|\overline{\beta}_\ep\|^2_{L^2(A_{\ep,2\ep}(x_\ep^{l,n});\R^2)}\\
			\qquad\qquad\qquad\qquad\qquad\qquad\qquad\le
		\frac{C}{|\log\ep|}+C\|\beta\|^2_{L^2(\bigcup_{\newatop{l=1}{m^l\neq 0}}^L\bigcup_{n=1}^{N^l_\ep}A_{\ep,2\ep}(x_\ep^{l,n});\R^2)}+\mathrm{o}(1)\\
		=\mathrm{o}(1)\to 0\,,
\end{multline}
where in the last inequality we used \eqref{convfortektilde}, \eqref{limitecore0} (with $r_\varepsilon$ replaced by $2\varepsilon$), and \eqref{tendeazero}, to deduce that
\begin{equation*}
	\begin{aligned}
		\frac{1}{N_\ep|\log\ep|}\sum_{\newatop{l=1}{m^l\neq 0}}^L\sum_{n=1}^{N^l_\ep}\|\overline{\beta}_\ep\|^2_{L^2(A_{\ep,2\ep}(x_\ep^{l,n});\R^2)}\le&\, \frac{2}{N_\ep|\log\ep|}\sum_{\newatop{l=1}{m^l\neq 0}}^L \sum_{n=1}^{N^l_\ep}\|\widehat K^{l,n}_\ep\|^2_{L^2(A_{\ep,2\ep}(x_\ep^{l,n});\R^2)}\\
		&\,+\frac{N_\ep}{|\log\ep|}\sum_{\newatop{l=1}{m^l\neq 0}}^L\sum_{n=1}^{N^l_\ep}\|\beta\|^2_{L^2(A_{\ep,2\ep}(x_\ep^{l,n});\R^2)}+\mathrm{o}(1)\\
		\le&\,\frac{C}{|\log\ep|}+C\|\beta\|^2_{L^2\big(\bigcup_{\newatop{l=1}{m^l\neq 0}}^L\bigcup_{n=1}^{N_\ep^l}A_{\ep,2\ep}(x_\ep^{l,n});\R^2\big)}+\mathrm{o}(1)\,.
	\end{aligned}
\end{equation*}
Therefore, by \eqref{convl1} and \eqref{convbeta}, using the very definition of $\vartheta_\ep$\,, we deduce \eqref{convgradienti}.
Furthermore, using \eqref{stimaenbeta} and again \eqref{convl1}, recalling \eqref{defbeta}, we get
\begin{equation}\label{stimaenteta}
	\begin{aligned}
		\lim_{\ep\to 0}
		\frac{1}{N_\ep|\log\ep|}\frac 1 2\int_{\Omega}|\nabla u_\ep|^2\ud x=&\,\lim_{\ep\to 0}
		\frac{1}{N_\ep|\log\ep|}2\pi^2\int_{\Omega}|\beta_\ep|^2\ud x\\
		=&\,
		\pi|\mu|(\Omega)+2\pi^2\int_{\Omega}|\beta|^2\ud x=\pi|\mu|(\Omega)+2\int_{\Omega}|\Td|^2\ud x\,,
	\end{aligned}
\end{equation}
which, combined with \eqref{uno_bis}, 
implies that 
the sequence $\{u_\ep\}_{\ep}$ satisfies \eqref{eq:limsup}.

Now, in order to conclude the proof of (iii) of Theorem \ref{mainthm} in the case $\mu=\sum_{l=1}^Lm^l\chi_{\omega^l}\ud x$\,, it remains to prove that also \eqref{compj} is satisfied.
To this end, we first observe that, by H\"older inequality and the very definition of $u_\ep$ in \eqref{recovery}, for every $l=1,\ldots,L$ with $m^l\neq0$ and for every $n=1,\ldots,N_\ep^l$
\begin{equation*}
\begin{aligned}
	\int_{A_{\frac{3}{2}\ep,2\ep}(x_\ep^{l,n})}|\nabla u_\ep|\ud x\le C \ep\|\nabla\vartheta_\ep\|_{L^2(A_{\frac{3}{2}\ep,2\ep}(x_\ep^{l,n});\R^2)}
\end{aligned}
\end{equation*}
which, by Fubini Theorem and by the Mean Value Theorem, implies that (for every $l=1,\ldots,L$ with $m^l\neq 0$ and for every  $n=1,\ldots,N^l_\ep$) there exists $\frac 3 2 \ep< \rho_\ep^{l,n}<2\ep$ such that
\begin{equation*}
\int_{\partial B_{\!\!\rho^{l,n}_\ep}(x_\ep^{l,n})}|\nabla u_\ep|\ud\Huno  \le C \|\nabla\vartheta_\ep\|_{L^2(A_{\frac{3}{2}\ep,2\ep}(x_\ep^{l,n});\R^2)}\,.
\end{equation*}
{
Therefore, recalling \eqref{uno}, by \eqref{convl1}, for $\ep$ small enough we get
\begin{align}\label{espera_1}
	\sum_{\newatop{l=1}{m^l\neq 0}}^L	\sum_{n=1}^{N^l_\ep}\int_{\partial B_{\!\!\rho^{l,n}_\ep}(x_\ep^{l,n})}\ud |T_{u_\ep}|&= \sum_{\newatop{l=1}{m^l\neq 0}}^L\sum_{n=1}^{N^l_\ep}\int_{\partial B_{\!\!\rho^{l,n}_\ep}(x_\ep^{l,n})}|\Td_{u_\ep}|\ud\Huno\nonumber\\
	& \le C\sqrt{N_\ep}\|\nabla\vartheta_\ep\|_{L^2\big(\bigcup_{\newatop{l=1}{m^l\neq 0}}^L\bigcup_{n=1}^{N^l_\ep}A_{\ep,2\ep}(x_\ep^{l,n});\R^2\big)}\nonumber\\
	&\leq CN_\ep|\log\ep|^{\frac12}\,.
\end{align}
}
Analogously,
again by \eqref{convl1} and by \eqref{uno}, using also H\"older inequality, we have
\begin{multline*}
		{|T_{u_\ep}|}\Big(\bigcup_{\newatop{l=1}{m^l\neq 0}}^{L}\bigcup_{n=1}^{N^l_\ep}B_{\!\rho_\ep^{l,n}}(x_\ep^{l,n})\Big)\le\, 2{|\mathrm{D}u_\ep|}\Big(\bigcup_{\newatop{l=1}{m^l\neq 0}}^{L}\bigcup_{n=1}^{N^l_\ep}B_{\!\rho_\ep^{l,n}}(x_\ep^{l,n})\Big)\\
		\qquad\qquad\qquad\le\,2\ep\|\nabla\vartheta_\ep\|_{L^2(\bigcup_{\newatop{l=1}{m^l\neq 0}}^L\bigcup_{n=1}^{N^l_\ep}B_{\rho_\ep^{l,n}}(x_\ep^{l,n});\R^{2})}\sqrt{\sum_{\newatop{l=1}{m^l\neq 0}}^LN^l_\ep\pi}+C\sum_{\newatop{l=1}{m^l\neq 0}}^{L}N^l_\ep\ep\\
		\leq \,C\ep N_\ep|\log\ep|^{\frac12}\,.
\end{multline*}
Recalling the definition of $\vartheta$ in \eqref{deftheta0}, we define
\begin{equation*}
	v_\ep(\cdot):= \exp\Big({\imath\sum_{\newatop{l=1}{m^l\neq 0}}^L\sum_{n=1}^{N^l_\ep}\vartheta(\cdot-x_\ep^{l,n})}\Big);
\end{equation*}
by construction, $v_\ep\in W^{1,p}(\Omega;\Ss^{1})$ for any $1\le p<2$\,, and $$Jv_\ep=\pi\mu_\ep\qquad \text{in }\mathcal M(\Om).$$ Moreover 
for every $x\in\Omega$ 
\begin{equation}\label{stimapuntuale}
	\begin{aligned}
		|T_{v_\ep}(x)|=&\,|\Td_{v_\ep}(x)|=2|\lambda_{v_\ep}(x)|\le 2\pi\sum_{\newatop{l=1}{m^l\neq 0}}^L\sum_{n=1}^{N^l_\ep}|\nabla\vartheta(x-x_\ep^n)|
		=2\pi\sum_{\newatop{l=1}{m^l\neq 0}}^L \sum_{n=1}^{N^l_\ep}\frac{1}{|x-x_\ep^{l,n}|}\,.
	\end{aligned}
\end{equation}
Therefore, for every $\bar l=1,\ldots,L$ with $m^{\bar l}\neq 0$ and for every $\bar n=1,\ldots,N^{\bar l}_\ep$\,, and for $\ep$ small enough we have
\begin{equation*}
	\begin{aligned}
		|T_{v_\ep}|(\partial B_{\!\rho^{\bar l,\bar n}_\ep}(x_\ep^{\bar l,\bar n}))\le&\,\sum_{\newatop{l=1}{m^l\neq 0}}^{L} \sum_{n=1}^{N^l_\ep}\int_{\partial B_{\!\!\rho_\ep^{\bar l,\bar n}}(x_\ep^{\bar l,\bar n})}\frac{2\pi}{|x-x_\ep^{ l, n}|}\ud\Huno\\
		\le&\,4\pi^2+\sum_{\newatop{n=1}{n\neq \bar n}}^{N_\ep^{\bar l}}\int_{\partial B_{\!\!\rho_\ep^{\bar l,\bar n}}(x_\ep^{\bar l,\bar n})}\frac{2\pi}{|x-x_\ep^{\bar l, n}|} 
		+\sum_{\newatop{l=1}{\newatop{l\neq\bar l}{m^l\neq 0}}}^{L}\sum_{n=1}^{N_\ep^l}\frac{2\pi}{|x-x_\ep^{ l, n}|}\ud\Huno\\
		\le &\, 4\pi^2+4\pi\ep \frac{N^{\bar l}_\ep}{r^{\bar l}_\ep}+\sum_{\newatop{l=1}{\newatop{l\neq\bar l}{m^l\neq 0}}}^{L}N^l_\ep \frac{4\pi\ep}{r^l_\ep}\le 4\pi^2+4\pi\ep \sum_{l=1}^L|m^l||\omega^l|N_\ep^{\frac 3 2} \le C\,,
	\end{aligned}
\end{equation*}
where in the last but one inequality we have used \eqref{sistema} together with the fact that 
$$
\inf_{\newatop{n=1,\ldots,N_\ep^{\bar l}}{n\neq \bar n}}|x_\ep^{\bar l, n}-x_\ep^{\bar l,\bar n}|\geq r_\ep^{\bar l},\qquad \qquad\inf_{\newatop{l=1,\ldots,L}{l\neq \bar l}}\inf_{n=1,\ldots,N_\ep^l}|x_\ep^{l, n}-x_\ep^{\bar l,\bar n}|\geq r^{\bar l}_\ep,
$$
so that  (for $\ep$ small enough) $\di(\partial B_{\rho_\ep^n}(x_\ep^{\bar l,\bar n}),x_\ep^{ l, n})\geq r^{\bar l}_\ep$\,.
It follows that
\begin{equation}\label{espera_2}
	\sum_{\newatop{l=1}{m^l\neq 0}}^{L}\sum_{n=1}^{N_\ep^l}\int_{\partial B_{\!\!\rho^{l,n}_\ep}(x_\ep^{ l, n})}|T_{v_\ep}|\ud\Huno\le CN_\ep=C|\log\ep|\,.
\end{equation}
Analogously, by \eqref{stimapuntuale}, for every $\bar l=1,\ldots,L$ with $m^{\bar l}\neq 0$, for every $\bar n=1,\ldots,N^{\bar l}_\ep$ and for $\ep$ small enough we have
\begin{equation*}
	\begin{aligned}
		|T_{v_\ep}|(B_{\!\rho^{\bar l,\bar n}_\ep}(x_\ep^{\bar l,\bar n}))\le&\,2\pi\int_{B_{\!\!\rho^{\bar l,\bar n}_\ep}(x_\ep^{\bar l,\bar n})}\frac{1}{|x-x_\ep^{\bar l,\bar n}|}\ud x+2\pi\sum_{\newatop{n=1}{n\neq \bar n}}^{N_\ep^{\bar l}}\int_{B_{\!\!\rho^{\bar l,\bar n}_\ep}(x_\ep^{\bar l,\bar n})}\frac{1}{|x-x_\ep^{\bar l, n}|}\ud x\\
		&\,\phantom{2\pi\int_{B_{\rho^{\bar n}_\ep}(x_\ep^{\bar n})}\frac{1}{|x-x_\ep^{\bar n}|}\ud x}+2\pi\sum_{\newatop{l=1}{\newatop{l\neq\bar l}{m^l\neq 0}}}
		^{L}\sum_{n=1}^{N_\ep^l}\int_{B_{\!\!\rho^{\bar l,\bar n}_\ep}(x_\ep^{\bar l,\bar n})}\frac{1}{|x-x_\ep^{l, n}|}\ud x\\
		\le&\,C\ep+CN^{\bar l}_\ep\frac{\ep}{r^{\bar l}_\ep}+C\sum_{\newatop{l=1}{\newatop{l\neq\bar l}{m^l\neq 0}}}
		^{L}N^l_\ep\frac{\ep}{r^l_\ep}\le CN_\ep^{\frac 3 2}\ep\,,
	\end{aligned}
\end{equation*}
whence we deduce that
\begin{equation}\label{nellepalle2}
	\frac{1}{|\log\ep|}{|T_{v_\ep}|}\Big(\bigcup_{\newatop{l=1}{m^l\neq 0}}^{L}\bigcup_{n=1}^{N^l_\ep}B_{\!\rho_\ep^{l, n}}(x_\ep^{ l, n})\Big)\to 0\qquad\textrm{as }\ep\to 0\,.
\end{equation}
Let finally $\ffi\in C_{\mathrm{c}}^1(\Omega)$ be such that  $\|\ffi\|_{C^{0,1}_{\mathrm{c}}(\Omega)}\le 1$\,. By the very definition of distributional Jacobian,
integrating by parts and using that
$$
Ju_\ep\res\Big(\Omega\setminus\bigcup_{\newatop{l=1}{m^l\neq 0}}^L\bigcup_{n=1}^{N^l_\ep}B_{\!\rho_\ep^{l,n}}(x_\ep^{l,n})\Big)=Jv_\ep\res\Big(\Omega\setminus\bigcup_{\newatop{l=1}{m^l\neq 0}}^L\bigcup_{n=1}^{N_\ep^l}B_{\!\rho_\ep^{l,n}}(x_\ep^{l,n})\Big)=0,
$$ 
we obtain
\begin{equation*}
	\begin{aligned}
		\langle Ju_\ep,\ffi\rangle_{\Omega}=&\,\sum_{\newatop{l=1}{m^l\neq 0}}^L\sum_{n=1}^{N^l_\ep}\int_{B_{\!\!\rho_\ep^{l,n}}(x_\ep^{l,n})}\nabla\ffi\cdot\ud T_{u_\ep}+\int_{\Omega\setminus\bigcup_{\newatop{l=1}{m^l\neq 0}}^L\bigcup_{n=1}^{N^l_\ep}B_{\!\!\rho_\ep^{l,n}}(x_\ep^{l,n})}\nabla\ffi\cdot\ud T_{u_\ep}\\
		=&\,\sum_{\newatop{l=1}{m^l\neq 0}}^L\sum_{n=1}^{N^l_\ep}\int_{B_{\!\!\rho_\ep^{l,n}}(x_\ep^{l,n})}\nabla\ffi\cdot\ud T_{u_\ep}-\sum_{\newatop{l=1}{m^l\neq 0}}^L\sum_{n=1}^{N^l_\ep}\int_{\partial B_{\!\!\rho_\ep^{l,n}}(x_\ep^{l,n})}\ffi T_{u_\ep}\cdot \nnu\ud\Huno\,,
	\end{aligned}
\end{equation*}
and
\begin{equation*}
	\begin{aligned}
		\langle Jv_\ep,\ffi\rangle_{\Omega}=&\,\sum_{\newatop{l=1}{m^l\neq 0}}^L\sum_{n=1}^{N^l_\ep}\int_{B_{\!\!\rho_\ep^{l,n}}(x_\ep^{l,n})}\nabla\ffi\cdot\ud T_{v_\ep}+\int_{\Omega\setminus\bigcup_{\newatop{l=1}{m^l\neq 0}}^L\bigcup_{n=1}^{N^l_\ep}B_{\!\!\rho_\ep^{l,n}}(x_\ep^{l,n})}\nabla\ffi\cdot\ud T_{v_\ep}\\
		=&\,\sum_{\newatop{l=1}{m^l\neq 0}}^L\sum_{n=1}^{N^l_\ep}\int_{B_{\!\!\rho_\ep^{l,n}}(x_\ep^{l,n})}\nabla\ffi\cdot\ud T_{v_\ep}-\sum_{\newatop{l=1}{m^l\neq 0}}^L\sum_{n=1}^{N^l_\ep}\int_{\partial B_{\!\!\rho_\ep^{l,n}}(x_\ep^{l,n})}\ffi T_{v_\ep}\cdot \nnu\ud\Huno\,.
	\end{aligned}
\end{equation*}
Therefore, by \eqref{espera_1}-\eqref{nellepalle2}, and using that {
	$$
	Ju_\ep(B_{\!\rho_\ep^{l,n}}(x_\ep^{l,n}))=Jv_\ep(B_{\!\rho_\ep^{l,n}}(x_\ep^{l,n})),\qquad \textrm{for every }l=1,\ldots,L\textrm{ with }m^l\neq 0,\,\, n=1,\dots, N_\ep^l,
	$$}
we obtain
\begin{equation*}
	\begin{aligned}
		\big|\langle Ju_\ep-\pi\mu_\ep,\ffi\rangle_{\Omega}\big|\le&\,\sum_{\newatop{l=1}{m^l\neq 0}}^{L}\sum_{n=1}^{N^l_\ep}\Big|\int_{\partial B_{\!\!\rho_\ep^{l,n}}(x_\ep^{l,n})}\ffi(T_{u_\ep}-T_{v_\ep})\cdot\nnu\ud\Huno\Big|\\
		&\,+|T_{u_\ep}|\Big(\bigcup_{\newatop{l=1}{m^l\neq 0}}^{L}\bigcup_{n=1}^{N^l_\ep}B_{\!\rho_\ep^{l,n}}(x_\ep^{l,n})\Big)+|T_{v_\ep}|\Big(\bigcup_{\newatop{l=1}{m^l\neq 0}}^L\bigcup_{n=1}^{N^l_\ep}B_{\!\rho_\ep^{l,n}}(x_\ep^{l,n})\Big)\,\\
		\le&\,\sum_{\newatop{l=1}{m^l\neq 0}}^{L}\sum_{n=1}^{N^l_\ep}\mathrm{osc}_{B_{\!\!\rho_\ep^{l,n}}(x_\ep^{l,n})}(\ffi)\int_{\partial B_{\!\!\rho^{l,n}_\ep}(x_\ep^{l,n})}\big(|T_{u_\ep}|+|T_{v_\ep}|\big)\ud\Huno\\
		&\,+ |\log\ep|\mathrm{o}(1)\,.
	\end{aligned}
\end{equation*}
Since $\mathrm{osc}_{B_{\!\!\rho_\ep^{l,n}}(x_\ep^{l,n})}(\ffi)\leq C\ep$, it follows that
\begin{equation*}
	\frac{1}{|\log\ep|}\left\|{Ju_\ep}-\pi\mu_\ep\right\|_{\flt,\Omega}\to 0\qquad\textrm{as }\ep\to 0\,,
\end{equation*}
whence \eqref{compj} follows by Lemma \ref{ovvieta}(b).

\medskip
{\it Case 2: General case.}
We argue by density, namely we show that for every $(\mu;\Td)\in (\M(\Omega)\cap H^{-1}(\Omega))\times L^2(\Omega;\R^2)$ with $\supp\mu\subset\subset\Omega$ and
$-\mathrm{Div}\, \Td=\pi\mu$ there exists a sequence $\{(\mu_k;\Td_k)\}_{k\in\N}\subset (\M(\Omega)\cap H^{-1}(\Omega))\times L^2(\Omega;\R^2)$ with $\supp\mu_k\subset\subset\Omega$ and $-\mathrm{Div} \Td_k=\pi\mu_k$ for every $k\in\N$ such that $\mu_k$ is locally constant for every $k$ (and takes the form as in Step 1), and
\begin{equation}\label{loccon}
	\mu_k\weakstar\mu\,,\qquad |\mu_k|(\Omega)\to|\mu|(\Omega)\,, \qquad \Td_k\to\Td\quad\textrm{in }L^2(\Omega;\R^2)\qquad\textrm{as }k\to +\infty\,.
\end{equation}
First, let $\{\rho_h\}_{h>0}$ be a sequence of standard mollifiers. We define
\begin{equation*}
	f_h:=\mu\ast \rho_h\,,\qquad\mu_h:=f_h\ud x\,,\qquad \Td_h:=(\Td\ast \rho_h)\res\Omega
\end{equation*}
By construction,  $-\mathrm{Div}\,\Td_h=2\pi\mu_h$ for every $h>0$ and, for $h$ small enough, $|\mu_h|(\partial\Omega)=0$\,. Moreover,
\begin{equation*}
	\mu_h\weakstar\mu\,,\qquad |\mu_h|(\Omega)\to |\mu|(\Omega)\,,\qquad\Td_h\to \Td\textrm{ in }L^2(\Omega;\R^2)\,.
\end{equation*}
Furthermore, since $\{f_h\}_{h>0}\subset C^{\infty}(\Omega)$\,, for every $h>0$\,, there exists a sequence $\{f_h^j\}_{j\in\N}$ with $f^j_h$ locally constant as in Step 1, such that
\begin{equation*}
	\|f^j_h-f_h\|_{L^\infty(\Omega)}\to 0\qquad\textrm{and}\qquad\int_{\Omega}(f^j_h-f_h)\ud x=0\,.
\end{equation*}
For every $h>0$ and $j\in\N$\,, let $w_h^j$ be the solution to 
\begin{equation*}	
	\left\{
	\begin{array}{ll}
		-\Delta w=f^j_h-f_h&\textrm{in }\Omega\\
		w=0&\textrm{on }\partial\Omega\,.
	\end{array}
	\right.
\end{equation*}
By standard elliptic estimates, we have
\begin{equation*}
	\|\nabla w^j_h\|_{L^2(\Omega;\R^2)}\le C\|f^j_h-f_h\|_{L^2(\Omega;\R^2)}\,.
\end{equation*}
Finally, for every $h>0$ and for any $j\in\N$\,, we set
$(\Td)_h^j:=\Td_h+2\pi\nabla w^j_h$\,, so that  $-\mathrm{Div}\,(\Td)_h^j=2\pi\mu_h^j$\,, and, for every $h>0$\,,
\begin{equation*}
	(\Td)_h^j\to \Td_h\qquad\textrm{in }L^2(\Omega;\R^2)\qquad\textrm{(as $j\to +\infty$)}\,.
\end{equation*}
Using a standard diagonal argument one can find a sequence $\{(\mu_k;\Td_k)\}_{k\in\N}$ satisfying \eqref{loccon}.
Finally, by arguing as in the second step of Case 1, we can construct the recovery sequence for the functional $\mathcal G_\ep$\,.
	\end{proof}
\section{Proof of Theorem \ref{mainthm_super}}\label{sec:6}
This section is devoted to the proof of Theorem \ref{mainthm_super}.

\begin{proof}[Proof of Theorem \ref{mainthm_super}]
	The compactness statement follows immediately by \eqref{enboundsuper} and \eqref{perlift}. 
	
	Analogously, the lower bound \eqref{liminfsuper} is a consequence of \eqref{perlift} and of the lower semicontinuity of the $L^2$ norm with respect to the weak convergence.
	
	Therefore, it  remains to prove only the upper bound.
	The proof is fully analogous to that of Theorem \ref{mainthm}(iii) in Subsection \ref{proofmainthmii}.
	We briefly sketch it.
	Let $\beta\in L^2(\Omega;\R^{2})$ be such that $\beta^\perp=\Td$ and we set $\mu:=\frac{1}{2\pi}\cu\beta=-\frac{1}{2\pi}\Div\Td$\,. Then $\mu\in\M(\Omega)\cap H^{-1}(\Omega)$\,. Moreover, by construction, $\supp\mu\subset\subset\Omega$\,.
	
	We show how to prove the claim only in the case $\mu=\chi_{\omega}\ud x$ for some $\omega\subset\Omega$ with Lipschitz continuous boundary, since the other cases follow by the former by arguing verbatim as in the proof of Theorem \ref{mainthm}(iii).
	Let 
	\begin{equation}\label{defmuep_bis}
		\mu_\ep:=\sum_{n=1}^{N_\ep^\omega}\delta_{x_\ep^n}
	\end{equation}
	be the measure provided by Lemma \ref{ovvieta} and let $\overline\beta_\ep\in L^2(\Omega;\R^2)$ be the field defined in \eqref{betafuori}. 
	By arguing verbatim as in Case 1 (first step) of the proof of Theorem \ref{mainthm}(iii), and using that here $N_\ep\gg |\log\ep|$\,, we have that
	\begin{equation*}
		\lim_{\ep\to 0}\frac{1}{N^2_\ep}\int_{\Omega}|\widehat K_\ep|^2\ud x=\lim_{\ep\to 0}\frac{1}{N_\ep^2}\sum_{n=1}^{N_\ep}\int_{A_{\ep,r_\ep}(x_\ep^n)}|\widehat K_\ep|^n\ud x=\lim_{\ep\to 0}\frac{1}{2\pi}\frac{\log r_\ep+|\log\ep|}{N_\ep}=0\,;
	\end{equation*}
	analogously, by arguing as in  \eqref{convfortektilde} and \eqref{tendeazero}, we get
	\begin{equation*}
		\lim_{\ep\to 0}\frac{1}{N^2_\ep}\int_{\Omega}|\widetilde K_\ep|^2\ud x=\lim_{\ep\to 0}\frac{1}{N^2_\ep}\int_{\Omega}|\nabla v_\ep|^2\ud x=0\,.
	\end{equation*}
	Therefore,
	\begin{equation}\label{unicaimp}
		\frac{\overline\beta_\ep}{N_\ep}\to 0\qquad\textrm{strongly in }L^2(\Omega;\R^2)\,.
	\end{equation}
	Finally, defining the sequence $\{u_\ep\}_\ep$ as in \eqref{recovery}, by \eqref{uno} and \eqref{convl1}, we have that
	\begin{equation*}
		\lim_{\ep\to 0}\frac{1}{N_\ep^2}\mathcal G_\ep(u_\ep)=\frac 1 2\int_{\Omega}|\beta|^2\ud x=\frac 1 2\int_{\Omega}|\Td|^2\ud x\,,
	\end{equation*}
	which concludes the proof of the claim in this case.
\end{proof}

\end{document}